\def\RR{\mathbb{R}}
\def\CC{\mathbb{C}}
\newcommand{\al}{{\alpha}}
\newcommand{\f}{{\varphi}}
\newcommand{\R}{{\mathbb  R}}
\newcommand{\C}{{\mathbb  C}}
\newcommand{\wt}{\widetilde}
\newcommand{\pt}{\partial}
\def\Ddots{\mathinner{\mkern1mu\raise\p@
\vbox{\kern7\p@\hbox{.}}\mkern2mu
\raise4\p@\hbox{.}\mkern2mu\raise7\p@\hbox{.}\mkern1mu}}
\newcommand{\cC}{\mathcal{C}}
\newcommand{\cF}{\mathcal{F}}
\newcommand{\cS}{\mathcal{S}}
\newcommand{\eps}{\varepsilon}
\newcommand{\cT}{\mathcal{T}}
\DeclareMathOperator{\dom}{dom}
\DeclareMathOperator{\ran}{ran}
\chardef\mathlig@atcode\count255
\def\actively#1#2{\begingroup\uccode`\~=`#2\relax\uppercase{\endgroup#1~}}
\def\mathlig@gobble{\afterassignment\mathlig@next@cmd\let\mathlig@next= }
\def\mathlig@delim{\mathlig@delim}
\def\mathlig@defcs#1{\expandafter\def\csname#1\endcsname}
\def\mathlig@let@cs#1#2{\expandafter\let\expandafter#1\csname#2\endcsname}
\def\mathlig@appendcs#1#2{\expandafter\edef\csname#1\endcsname{\csname#1\endcsname#2}}
\def\mathlig#1#2{\mathlig@checklig#1\mathlig@end\mathlig@defcs{mathlig@back@#1}{#2}\ignorespaces}
\def\mathlig@checklig#1#2\mathlig@end{%
 \expandafter\ifx\csname mathlig@forw@#1\endcsname\relax
 \expandafter\mathchardef\csname mathlig@back@#1\endcsname=\mathcode`#1%
 \mathcode`#1"8000\actively\def#1{\csname mathlig@look@#1\endcsname}%
 \mathlig@dolig#1\mathlig@delim
\fi
\mathlig@checksuffix#1#2\mathlig@end
}
\def\mathlig@checksuffix#1#2\mathlig@end{%
\ifx\mathlig@delim#2\mathlig@delim\relax\else\mathlig@checksuffix@{#1}#2\mathlig@end\fi
}
\def\mathlig@checksuffix@#1#2#3\mathlig@end{%
\expandafter\ifx\csname mathlig@forw@#1#2\endcsname\relax\mathlig@dosuffix{#1}{#2}\fi
\mathlig@checksuffix{#1#2}#3\mathlig@end
}
\def\mathlig@dosuffix#1#2{%
\mathlig@appendcs{mathlig@toks@#1}{#2}%
\mathlig@dolig{#1}{#2}\mathlig@delim
}
\def\mathlig@dolig#1#2\mathlig@delim{%
%The look macro just \futurelets what's coming up and
%then passes control to forw
 \mathlig@defcs{mathlig@look@#1#2}{%
 \mathlig@let@cs\mathlig@next{mathlig@forw@#1#2}\futurelet\mathlig@next@tok\mathlig@next}%
%The forw macro uses chck to try all possible suffixes, passing control
%either to one of those, or to the back macro
 \mathlig@defcs{mathlig@forw@#1#2}{%
  \mathlig@let@cs\mathlig@next{mathlig@back@#1#2}%
  \mathlig@let@cs\checker{mathlig@chck@#1#2}%
  \mathlig@let@cs\mathligtoks{mathlig@toks@#1#2}%
  \expandafter\ifx\expandafter\mathlig@delim\mathligtoks\mathlig@delim\relax\else
  \expandafter\checker\mathligtoks\mathlig@delim\fi
  \mathlig@next
 }%
%The toks macro just stores the suffixes
 \mathlig@defcs{mathlig@toks@#1#2}{}%
%The chk macro goes through the suffixes one by one
%tail recursing until it runs out, or finds one.
 \mathlig@defcs{mathlig@chck@#1#2}##1##2\mathlig@delim{%
  %\message{Lig so far '#1#2', checking for '##1'}%
  \ifx\mathlig@next@tok##1%
   \mathlig@let@cs\mathlig@next@cmd{mathlig@look@#1#2##1}\let\mathlig@next\mathlig@gobble
  \fi
  \ifx\mathlig@delim##2\mathlig@delim\relax\else
   \csname mathlig@chck@#1#2\endcsname##2\mathlig@delim
  \fi
 }%
%
% The back macro, defined only if this is a ligature of at least
% two characters, is a default fallback: go back to the previous char.
% (If this is a ligature of only one character, the mathcode will have
% been saved and set as the fallback elsewhere)
 \ifx\mathlig@delim#2\mathlig@delim\else
  \mathlig@defcs{mathlig@back@#1#2}{\csname mathlig@back@#1\endcsname #2}%
 \fi
}%
\mathchardef\ordinarycolon\mathcode`\:
\def\vcentcolon{\mathrel{\mathop\ordinarycolon}}
\numberwithin{equation}{section}
\theoremstyle{plain}
\newtheorem{theo}{Theorem}[section]
\newtheorem{cor}[theo]{Corollary}
\newtheorem{lem}[theo]{Lemma}
\newtheorem{prop}[theo]{Proposition}
\newtheorem{hypothesis}[theo]{Hypothesis}
\theoremstyle{definition}
\newtheorem*{theorem*}{Theorem}
\theoremstyle{remark}
\newtheorem*{ex*}{Example}
\theoremstyle{remark}
\newtheorem*{exs*}{Examples}
\theoremstyle{remark}
\newtheorem*{rems*}{Remarks}
\theoremstyle{remark}
\newtheorem*{rem*}{Remark}
\newtheorem{rem}[theo]{Remark}
\theoremstyle{definition}
\newtheorem*{idea*}{Idea}
\theoremstyle{remark}
\theoremstyle{remark}
\title[Nonrelativistic Limit of Generalized MIT Bag Models]{Nonrelativistic Limit of Generalized MIT Bag Models and Spectral Inequalities}
\author{Jussi~Behrndt}
\author{Dale~Frymark}
\author{Markus~Holzmann}
\author{Christian~Stelzer-Landauer}
\address{Institut f\"{u}r Angewandte Mathematik,
Technische Universit\"{a}t Graz,
Steyrergasse 30, A 8010 Graz, Austria.}
\email{behrndt@tugraz.at, dfrymark2@gmail.com, \newline holzmann@math.tugraz.at, christian.stelzer@tugraz.at}
\keywords{Dirac operator; generalized MIT bag boundary conditions; nonrelativistic limit; spectral inequalities}
\subjclass[2020]{Primary: 81Q10; 58J50. Secondary: 35Q40}
\begin{document}

\begin{abstract}
For a family of self-adjoint Dirac operators $-i c (\alpha \cdot \nabla)  + \frac{c^2}{2}$ subject to generalized 
MIT bag boundary conditions on domains in $\mathbb R^3$, it is shown that the nonrelativistic limit in the norm resolvent sense is the Dirichlet Laplacian.
This allows for the transfer of spectral geometry results for Dirichlet Laplacians to Dirac operators for large $c$.  
\end{abstract}

\maketitle

\setcounter{tocdepth}{1}

\section{Introduction}

The MIT bag operator and more general types of self-adjoint Dirac operators on domains $\Omega\subset\mathbb R^3$ have attracted a lot of attention in the last years.
The MIT bag model itself originates from the investigation of quarks in hadrons from the 1970s \cite{B68,CJJTW74,DJJK75,J75} and has been studied from a more mathematical perspective in \cite{ALTR17,ALT20,ALMT19,BHM20,LO23,MOP20,OV16,R20,R21}. 
The present paper is inspired by the recent contribution \cite{AMSV22}, where spectral properties of the family $H_\kappa^{\Omega}$, $\kappa\in\mathbb R$, of self-adjoint Dirac operators 
\begin{equation} \label{def_H_tau}
  \begin{split}
    H_\kappa^{\Omega} f &= -i c (\alpha \cdot \nabla) f + \frac{c^2}{2} \beta f, \\
    \dom H_\kappa^{\Omega} &= \big\{ f \in H^1(\Omega; \mathbb{C}^4): f = i (\sinh (\kappa) I_4 - \cosh (\kappa) \beta) (\alpha \cdot \nu) f\,\,\text{on}\,\,\partial\Omega \big\},
  \end{split}
\end{equation}
in $L^2(\Omega; \mathbb{C}^4)$ were studied. Here $\alpha \cdot \nabla=\alpha_1 \partial_1 + \alpha_2 \partial_2 + \alpha_3 \partial_3$ with the usual 
Dirac matrices $\alpha_1, \alpha_2, \alpha_3, \beta \in \mathbb{C}^{4 \times 4}$ (see \eqref{e-diracmatrices} and \eqref{notation} below), $c>0$ is the speed of light, $\Omega$ is a $C^2$-domain with unit normal vector $\nu$, and 
$H^1(\Omega; \mathbb{C}^4)$ is the first order $L^2$-based Sobolev space. The operators $H_\kappa^{\Omega}$ model the propagation of a relativistic spin $\frac{1}{2}$
particle with mass $m = \frac{1}{2}$ subject to the boundary conditions 
in \eqref{def_H_tau}, which are a three-dimensional counterpart of the quantum dot boundary conditions; cf. \cite{BFSB17_1,BFSB17_2},
the introduction in \cite{AMSV22} for more references in dimension two, and Section~\ref{section_confinement} for a further motivation of the boundary conditions. In particular, for $\kappa = 0$ the standard 
MIT bag boundary conditions are recovered.
If $\Omega$ is bounded, then the spectrum of $H_\kappa^{\Omega}$ is purely discrete and consists of eigenvalues
\begin{equation}\label{evs}
  \dots \leq \lambda_2^{-}(H_\kappa^{\Omega}) \leq \lambda_1^{-}(H_\kappa^{\Omega}) \leq -\frac{c^2}{2} < \frac{c^2}{2} \leq \lambda_1^{+}(H_\kappa^{\Omega}) \leq \lambda_2^{+}(H_\kappa^{\Omega}) \leq \dots,
\end{equation}
that accumulate at $\pm\infty$. The main objective in \cite{AMSV22} is the analysis of the eigenvalue curves $\kappa\mapsto \lambda_j^\pm(H_\kappa^{\Omega})$ and their asymptotic behaviour, which
then leads to spectral geometry results for $H_\kappa^{\Omega}$ with $\kappa$ sufficiently large. The most remarkable result therein is a variant of the Faber-Krahn inequality for $\kappa$ sufficiently large
minimizing the first positive eigenvalue when $\Omega$ is a ball. For related spectral geometry results for two-dimensional Dirac operators with infinite mass boundary conditions we refer to \cite{ABLO21, BFSB17_2, BK22, LO19, V23}.

In this paper we propose a different approach to obtain spectral inequalities and spectral geometry results for the Dirac operators $H_\kappa^{\Omega}$, which is based on the analysis of a 
nonrelativistic limit. This allows us to conclude for all sufficiently large $c$ and \textit{all} $\kappa\in\mathbb R$,
e.g., the Faber-Krahn inequality for the first two positive eigenvalues $\lambda_1^{+}(H_\kappa^{\Omega})$, $\lambda_2^{+}(H_\kappa^{\Omega})$, the Hong-Krahn-Szeg\"o inequality minimizing the second two positive eigenvalues $\lambda_3^{+}(H_\kappa^{\Omega})$, $\lambda_4^{+}(H_\kappa^{\Omega})$, 
or the Payne-P\'{o}lya-Weinberger inequality for the ratios $\lambda_j^+(H_\kappa^{\Omega})/\lambda_l^+(H_\kappa^{\Omega})$, $j=1,2$, $l=3,4$, of the first two and the second two positive eigenvalues, relying on classical counterparts for the Dirichlet Laplacian \cite{AB92,F23,H54,K25,K26,P55}; here the spectral inequalities come for pairs of eigenvalues, as all eigenvalues of $H_\kappa^\Omega$ have even multiplicity, and remain valid in an analogous form also for the first two pairs of negative eigenvalues, see Remark~\ref{laterrem}.
In the same spirit other results from spectral geometry 
can be transferred from Laplacians to Dirac operators; we refer the reader to the monographs \cite{H06,LMP23,PS51} for an introduction to and overview of this topic, but limit ourselves to the above-mentioned three examples.

The nonrelativistic limit provides a connection of the generalized MIT bag models with their nonrelativistic counterparts, i.e. Schr\"odinger operators, and is of independent interest, as it gives a physical interpretation of $H_\kappa^\Omega$.
To find it one has to subtract the energy of the resting particle $\frac{c^2}{2}$ and compute the limit of the resolvent of $H_\kappa^{\Omega} - \frac{c^2}{2}$ as $c \rightarrow \infty$. 
Nonrelativistic limits of Dirac operators have been computed in many different settings. More information on three dimensional Dirac operators with regular potentials, for example, can be found in \cite[Chapter~6]{T92} and the references therein. In \cite{C14} it was shown that the nonrelativistic limit of a family of one-dimensional Dirac operators with boundary conditions containing the counterpart of $H_\kappa^\Omega$ is a Dirichlet or a Neumann Laplacian. Moreover, the nonrelativistic limit of one-dimensional Dirac operators with singular interactions supported on points, which are closely related to one-dimensional Dirac operators with boundary conditions, was studied extensively in \cite{BMP17, CMP13, GS87, HT22}. In higher dimensions, the nonrelativistic limit of Dirac operators with singular potentials supported on curves in $\mathbb{R}^2$ and surfaces in $\mathbb{R}^3$ was computed in various situations in \cite{BEHL18,BEHL19,BEHT23,BHS22}. We also point out the paper \cite{ALTR17}, where it is shown that for bounded $\Omega$ the discrete eigenvalues of the MIT bag model, i.e. of $H_\kappa^\Omega$ in~\eqref{def_H_tau} for $\kappa = 0$,  converge in the nonrelativistic limit to the eigenvalues of the Dirichlet Laplacian. However, in \cite{ALTR17} only the convergence of the eigenvalues and not of the operator itself was studied. 

In order to state our main result on the nonrelativistic limit of the operators $H_\kappa^\Omega$ we make the following assumption on $\Omega$, where we use the definition of a $C^2$-domain as, e.g., in \cite{M00}.

\begin{hypothesis} \label{hypothesis_Omega}
Let $\Omega\subset \mathbb{R}^3$ be a (bounded or unbounded) $C^2$-domain, not necessarily connected, with a compact boundary
and unit normal vector field $\nu$ pointing outwards of $\Omega$.
The bounded element in $\{\Omega,\mathbb{R}^3\setminus\overline\Omega\}$ is denoted by $\Omega_+$, 
the unbounded element in $\{\Omega,\mathbb{R}^3\setminus\overline\Omega\}$ is denoted by $\Omega_-$, and  $\nu_+$ is the unit normal vector field pointing outwards of $\Omega_+$,
so that  $\nu=\nu_+$ if $\Omega=\Omega_+$ and $\nu=-\nu_+$ if $\Omega=\Omega_-$.
For the common boundary we write
$\Sigma:=\partial \Omega = \partial \Omega_+=\partial \Omega_-$.
\end{hypothesis}

Then, the main result of the present paper reads as follows:

\begin{theo} \label{theorem_nr_limit}
  Let $\kappa \in \mathbb{R}$, $\Omega \subset \mathbb{R}^3$ be as in Hypothesis~\ref{hypothesis_Omega}, and $z \in \mathbb{C} \setminus [0, \infty)$. Then, 
  there exists a constant $K(z)$ such that 
  for all $c$ sufficiently large $z+\tfrac{c^2}{2} \in \rho(H_\kappa^{\Omega}) \cap \rho(-\Delta_D^{\Omega})$ and
  \begin{equation} \label{equation_nr_limit}
    \left\| \left( H_\kappa^{\Omega} - \left(z + \frac{c^2}{2}\right) \right)^{-1} - (-\Delta_D^{\Omega} - z)^{-1} \begin{pmatrix} I_2 & 0 \\ 0 & 0 \end{pmatrix} \right\|_{L^2(\Omega; \mathbb{C}^4) \rightarrow L^2(\Omega; \mathbb{C}^4)} \leq \frac{K(z)}{\sqrt{c}},
  \end{equation}
  where $-\Delta_D^{\Omega}$ denotes the self-adjoint Dirichlet Laplacian in $L^2(\Omega; \mathbb{C})$.
\end{theo}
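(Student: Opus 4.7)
My plan is to analyse the resolvent equation $(H_\kappa^\Omega - z - c^2/2)f = g$ directly by a Schur-complement argument, from which the Dirichlet Laplacian emerges as the effective operator on the upper two-spinor component. Using the block structure of the Dirac matrices $\alpha_j = \bigl(\begin{smallmatrix}0 & \sigma_j \\ \sigma_j & 0\end{smallmatrix}\bigr)$ and $\beta = \mathrm{diag}(I_2,-I_2)$, and decomposing $f = (f_u,f_l)^\top$, $g = (g_u,g_l)^\top$ into upper/lower two-spinors in $L^2(\Omega;\mathbb{C}^2)$, the resolvent equation reads
\begin{align*}
  -z f_u - ic(\sigma\cdot\nabla)f_l &= g_u, \\
  -ic(\sigma\cdot\nabla)f_u - (z+c^2)f_l &= g_l,
\end{align*}
subject to the boundary condition $f_u = -ie^{-\kappa}(\sigma\cdot\nu)f_l$ on $\Sigma$, obtained by rewriting $i(\sinh\kappa\,I_4 - \cosh\kappa\,\beta)(\alpha\cdot\nu)$ in block form.

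For $c$ so large that $c^2 > |z|$, the second equation yields $f_l = -(z+c^2)^{-1}\bigl(ic(\sigma\cdot\nabla)f_u + g_l\bigr)$. Substituting back into the first equation and using $(\sigma\cdot\nabla)^2 = \Delta$ collapses the system into an effective Helmholtz problem for $f_u$,
\[
  (-\Delta - z_c)f_u = \tfrac{z+c^2}{c^2}g_u - \tfrac{i}{c}(\sigma\cdot\nabla)g_l \quad \text{in }\Omega, \qquad z_c := \tfrac{z(z+c^2)}{c^2} \to z,
\]
with the substituted boundary condition
\[
  f_u - \tfrac{c\,e^{-\kappa}}{z+c^2}(\sigma\cdot\nu)(\sigma\cdot\nabla)f_u = \tfrac{i e^{-\kappa}}{z+c^2}(\sigma\cdot\nu)g_l \quad \text{on }\Sigma.
\]
The prefactor $c/(z+c^2) = O(1/c)$ makes this boundary condition degenerate formally to $f_u = 0$ and the bulk problem to $(-\Delta - z)u = g_u$, so one expects $f_u \to u := (-\Delta_D^\Omega - z)^{-1}g_u$ while $f_l \to 0$, matching the right-hand side of \eqref{equation_nr_limit}.

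The quantitative bound would then follow by comparing $f_u$ with $u \in H^2(\Omega)\cap H^1_0(\Omega)$, using the standard Dirichlet elliptic regularity estimate $\|u\|_{H^2(\Omega)} \le C(z)\|g_u\|_{L^2(\Omega)}$ available under Hypothesis~\ref{hypothesis_Omega} for $C^2$ domains with compact boundary. The difference $w := f_u - u$ satisfies the Helmholtz equation with a source that is $O(1/c)$ in $L^2$ (controlled by $\|u\|_{H^2}$ and $\|g\|_{L^2}$) and with boundary data essentially given by $c^{-1}(\sigma\cdot\nu)(\sigma\cdot\nabla)u$; lifting this data through a bounded right inverse of the trace $H^1(\Omega) \to H^{1/2}(\Sigma)$ and invoking the Dirichlet Laplacian resolvent controls $\|w\|_{L^2(\Omega)}$, while the bound on $f_l$ follows immediately from its explicit expression once $\|\nabla f_u\|_{L^2} \lesssim \|g\|_{L^2}$ is established. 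Combining these pieces produces \eqref{equation_nr_limit}; the inclusion $z+c^2/2 \in \rho(H_\kappa^\Omega)$ for large $c$ follows a posteriori, since the approximate resolvent just constructed is a parametrix whose error can be removed by a Neumann series for $c$ sufficiently large.

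The main obstacle is to handle the substituted boundary condition rigorously and to extract the sharp $c^{-1/2}$ rate. The delicate point is that $(\sigma\cdot\nabla)g_l$ and $g_l|_\Sigma$ are in general only distributional for $g \in L^2(\Omega;\mathbb{C}^4)$, so the comparison argument should first be carried out for $g \in H^1(\Omega;\mathbb{C}^4)$, where everything is well defined, and then extended to $L^2$ data by density using the uniform-in-$c$ operator bound. The $c^{-1/2}$ factor itself is the fingerprint of the multiplicative trace inequality $\|v\|_{L^2(\Sigma)}^2 \le C\|v\|_{L^2(\Omega)}\|v\|_{H^1(\Omega)}$ applied to $v = \nabla u$, which is the standard mechanism through which a small boundary-layer perturbation of a second-order elliptic problem propagates into the interior.
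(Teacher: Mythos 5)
Your reduction is sound as far as it goes: eliminating the lower spinor via the second equation to obtain an effective Helmholtz problem for $f_u$ with a Robin-type boundary condition of order $O(1/c)$ is exactly the Schur-complement idea that underlies the paper, only carried out at the PDE level rather than at the level of boundary integral operators. (The paper instead passes to the $\delta$-shell operator $A_\kappa^\Sigma$ via Lemma~\ref{lemma_confinement}, writes a Krein resolvent formula in Proposition~\ref{proposition_delta_op}, and forms the Schur complement $\widetilde{\mathcal S}_{z,c}$ of the coefficient operator $\vartheta_c+\mathcal M_c\mathcal C_{z+c^2/2}\mathcal M_c$; the single-layer operator $\mathcal S_z$ emerges there in precisely the role your $(-\Delta_D^\Omega-z)^{-1}$ plays.) Your block algebra and the boundary condition computation are essentially correct, up to a sign in the Robin term, which is immaterial.

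However, there are genuine gaps in the quantitative part that cannot be brushed aside. First, the reduced boundary value problem you write down for $f_u$ is not classically posed for $g\in L^2(\Omega;\mathbb C^4)$: after substitution the boundary condition contains $\gamma_D g_l$ and $\gamma_D(\sigma\cdot\nabla)f_u$ separately, and neither trace exists individually for $g_l\in L^2$ and $f_u\in H^1$; only the combination $\gamma_D f_l=-\tfrac{1}{z+c^2}\gamma_D(ic(\sigma\cdot\nabla)f_u+g_l)$ is defined. Your proposed fix, to prove the estimate for $g\in H^1(\Omega;\mathbb C^4)$ and then extend by density, does not produce what Theorem~\ref{theorem_nr_limit} asserts: an estimate $\|(T_c-T)g\|\le Kc^{-1/2}\|g\|_{H^1}$ on a dense subspace together with uniform $L^2\to L^2$ bounds on $T_c$ and $T$ yields only strong convergence $T_cg\to Tg$ for each fixed $g$, not an operator-norm rate. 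Second, the mechanism you cite for the $c^{-1/2}$ rate---the multiplicative trace inequality applied to $\nabla u$ where $u=(-\Delta_D^\Omega-z)^{-1}g_u$---does not obviously produce a $c$-dependent gain, since $u$ and $\nabla u$ are independent of $c$ and the only $c$-dependence in the boundary data for $w=f_u-u$ comes from the prefactor $c/(z+c^2)=O(1/c)$; you would need a separate argument explaining why the overall estimate degrades from the formal $O(1/c)$ to $O(c^{-1/2})$ as regularity on $g$ is relaxed. Finally, the uniform-in-$c$ solvability and regularity theory for the family of oblique boundary value problems $f_u+\epsilon(\sigma\cdot\nu)(\sigma\cdot\nabla)f_u=h$ with $\epsilon\to 0$ (the operator $(\sigma\cdot\nu)(\sigma\cdot\nabla)$ contains tangential derivatives and is not the normal derivative) is asserted via a parametrix-plus-Neumann-series argument but none of the required uniform estimates is actually established. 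These are precisely the points where the paper invests its technical effort, using the mapping properties of $\mathcal C_z$, $\mathcal S_\mu$, and the auxiliary operator $\mathcal T_{z+c^2/2}$ across the fractional scale $H^s(\Sigma)$, $s\in[-\tfrac32,\tfrac32]$, together with the scaling matrix $\mathcal M_c$, to obtain honest operator-norm bounds with rate $c^{-1/2}$.
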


The strategy to prove Theorem~\ref{theorem_nr_limit} is to consider the self-adjoint orthogonal sum $H_\kappa^{\Omega_+} \oplus H_\kappa^{\Omega_-}$ in $L^2(\Omega_+; \mathbb{C}^4)\oplus
L^2(\Omega_-; \mathbb{C}^4)$, which can be identified with
a self-adjoint Dirac operator $A_\kappa^{\Sigma}$ in $L^2(\mathbb R^3; \mathbb{C}^4)$ with a $\delta$-shell potential supported on $\Sigma$, see \cite{AMV15, BEHL19, BHM20, B22}.
Such types of Dirac operators with singular interactions are well-studied, see the review article \cite{BHSS22} and the references therein.
We collect some properties of $A_\kappa^{\Sigma}$ in Section~\ref{section_confinement} and provide a Krein type formula in Proposition~\ref{proposition_delta_op}
for its resolvent, which is the key tool for
the analysis of the nonrelativistic limit. Each of the terms appearing in the resolvent formula will be examined separately and the main technical difficulty
is the limit behavior of the inverse of 
\begin{equation}\label{schwierig}
\vartheta_c + \mathcal{M}_c \mathcal{C}_{z + c^2/2} \mathcal{M}_c,
\end{equation}
involving a strongly singular boundary integral operator $\mathcal{C}_{z + c^2/2}$ on $\Sigma$,
a coefficient matrix $\vartheta_c$ modelling the boundary condition in $\dom H_\kappa^\Omega$, and a scaling matrix $\mathcal{M}_c$ (see \eqref{e-cz},
\eqref{def_coefficients}, and \eqref{def_M_c} for details). In fact, it turns out that the  
operator in~\eqref{schwierig} does not converge to a boundedly invertible operator in one Sobolev space on $\Sigma$, but instead it is necessary 
to study the convergence of the inverse of \eqref{schwierig}
as an operator acting between different fractional order Sobolev spaces on $\Sigma$.
Here we argue via the Schur complement and rely on an advanced and deep analysis
of various boundary integral operators appearing in this context.
Eventually, it turns out that the limit of
$A_\kappa^{\Sigma}$ in the norm resolvent sense is an orthogonal sum of Dirichlet Laplacians and compressing the resolvents onto the original domain
leads to \eqref{equation_nr_limit}.

It is well-known that the operator norm convergence in \eqref{equation_nr_limit} implies the convergence of the corresponding spectra (see, e.g. \cite{kato,RS72,W00}) and, in particular, 
if $\Omega$ is bounded, the spectrum of $H_\kappa^\Omega$ is discrete and we conclude convergence of eigenvalues. This leads to spectral inequalities for the positive eigenvalues of 
the Dirac operators $H_\kappa^{\Omega}$, $\kappa\in\mathbb R$, for $c>0$ sufficiently large; cf. Remark~\ref{laterrem} for analogous results for the negative eigenvalues.

\begin{cor} \label{cor_Faber_Krahn}
  Let $\kappa \in \mathbb{R}$, $\Omega \subset \mathbb{R}^3$ be a bounded $C^2$-domain, $B \subset \mathbb{R}^3$ be a ball such that $\vert B\vert=\vert\Omega\vert$ and $B_1,B_2 \subset \R^3$ be identical and disjoint balls such that $\vert B_1\vert + \vert B_2\vert=\vert\Omega\vert$. Then, the following assertions hold for $c>0$ sufficiently large:
  \begin{itemize}
  	\item[(i)] $\lambda_{j}^+(H_\kappa^{B}) \leq \lambda_{j}^+(H_\kappa^{\Omega})$ for $j \in \{1,2\}$ and equality holds if and only if $\Omega$ is a ball.
  	\item[(ii)] $\lambda_{j}^+(H_\kappa^{B_1 \cup B_2}) \leq \lambda_{j}^+(H_\kappa^{\Omega})$ for $j \in \{3,4\}$ and equality holds if and only if $\Omega$ is the union of two identical disjoint balls.
  	\item[(iii)] If, in addition, $\Omega$ is connected, then 
  	\begin{equation*}
  		\frac{\lambda_{j}^+(H_\kappa^{B})}{\lambda_{l}^+(H_\kappa^{B})} \leq \frac{\lambda_{j}^+(H_\kappa^{\Omega})}{ \lambda_{l}^+(H_\kappa^{\Omega})}, \qquad j \in \{1,2\},\, l \in \{3,4\},
  	\end{equation*}
   and  equality holds if and only if $\Omega$ is a ball.
  \end{itemize}
\end{cor}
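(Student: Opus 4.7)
The plan is to use the norm resolvent convergence established in Theorem~\ref{theorem_nr_limit} to transfer the three classical spectral inequalities for the Dirichlet Laplacian---Faber-Krahn, Hong-Krahn-Szeg\"o, and Payne-P\'olya-Weinberger---to their Dirac analogues for $c$ sufficiently large. First I would record the standard spectral consequence of norm resolvent convergence: since $\Omega$ is bounded, $-\Delta_D^\Omega$ has purely discrete spectrum, and \eqref{equation_nr_limit} implies by classical perturbation theory (see e.g.\ \cite{kato,RS72,W00}) that each eigenvalue of $H_\kappa^\Omega - \tfrac{c^2}{2}$ converges to an eigenvalue of the limit operator as $c\to\infty$, with multiplicities doubled because of the two upper spinor components and no limit eigenvalues surviving on the lower two. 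Denoting the Dirichlet eigenvalues by $\mu_1 \leq \mu_2 \leq \dots$ listed with multiplicity, one thus has $\lambda_j^+(H_\kappa^\Omega) - \tfrac{c^2}{2} \to \mu_{\lceil j/2 \rceil}(-\Delta_D^\Omega)$ as $c\to\infty$, so that $\lambda_1^+, \lambda_2^+$ both track $\mu_1$ and $\lambda_3^+, \lambda_4^+$ both track $\mu_2$, consistent with the known even multiplicity of the Dirac eigenvalues and explaining the natural pairing of indices in items (i)--(iii).

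Next I would invoke the respective classical spectral inequality from \cite{AB92,F23,H54,K25,K26,P55}: Faber-Krahn yields $\mu_1(-\Delta_D^B) \leq \mu_1(-\Delta_D^\Omega)$, strict unless $\Omega$ is a ball; Hong-Krahn-Szeg\"o yields $\mu_2(-\Delta_D^{B_1\cup B_2}) \leq \mu_2(-\Delta_D^\Omega)$, strict unless $\Omega$ is the union of two identical disjoint balls; and PPW yields $\mu_2(-\Delta_D^\Omega)/\mu_1(-\Delta_D^\Omega) \leq \mu_2(-\Delta_D^B)/\mu_1(-\Delta_D^B)$ for connected $\Omega$, strict unless $\Omega$ is a ball. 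For a fixed $\Omega$ that does not equal the minimizing configuration, each of these strict classical inequalities, combined with the eigenvalue convergence above, yields the corresponding strict Dirac inequality for all $c$ sufficiently large (threshold depending on $\Omega$); this gives both the stated Dirac inequality and its equality characterization. When $\Omega$ coincides with the minimizer the Dirac assertion is a trivial equality.

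The main technical delicacy will appear in (iii), since each Dirac ratio $\lambda_j^+(H_\kappa^\Omega)/\lambda_l^+(H_\kappa^\Omega)$ tends to $1$ as $c\to\infty$ on account of the common dominant $\tfrac{c^2}{2}$, and direct passage to the limit in the ratio only reproduces $1\leq 1$. My plan here is to cross-multiply to the equivalent form $\lambda_j^+(H_\kappa^B)\lambda_l^+(H_\kappa^\Omega) \leq \lambda_j^+(H_\kappa^\Omega)\lambda_l^+(H_\kappa^B)$, substitute the expansion $\lambda_j^+(\cdot) = \tfrac{c^2}{2} + \mu_{\lceil j/2\rceil}(-\Delta_D^{\,\cdot}) + o(1)$ supplied by Theorem~\ref{theorem_nr_limit}, and combine the strict classical PPW inequality controlling the constant term with Faber-Krahn controlling the $c^2$-level term to extract the desired Dirac ratio bound for $c$ sufficiently large.
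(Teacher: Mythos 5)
Your treatment of parts (i) and (ii) matches the paper's: one first derives, via functional calculus and the convergence of eigenvalues of compact operators under norm resolvent convergence, the asymptotics
$\lambda_{2j-1}^+(H_\kappa^\Omega) - \tfrac{c^2}{2} = \lambda_{2j}^+(H_\kappa^\Omega) - \tfrac{c^2}{2} \to \lambda_j(-\Delta_D^\Omega)$
as $c\to\infty$, and then transfers the strict Faber-Krahn and Hong-Krahn-Szeg\"o inequalities to $H_\kappa^\Omega$ for $c$ large; this part of your proposal is sound.

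Part (iii), however, has a real gap. You correctly identify the difficulty that both Dirac ratios tend to $1$, and you propose to cross-multiply. But carrying out the cross-multiplication, with $\gamma := \tfrac{c^2}{2}$ and $\mu_k$ denoting the $k$-th Dirichlet eigenvalue of the respective domain, gives
\begin{equation*}
  \lambda_j^+(H_\kappa^B)\,\lambda_l^+(H_\kappa^\Omega) - \lambda_j^+(H_\kappa^\Omega)\,\lambda_l^+(H_\kappa^B)
  = \gamma\Bigl[\bigl(\mu_2(\Omega)-\mu_1(\Omega)\bigr)-\bigl(\mu_2(B)-\mu_1(B)\bigr)\Bigr] + \gamma\, o(1) + O(1),
\end{equation*}
so the decisive $\gamma$-level coefficient is the difference of spectral \emph{gaps}, not a quantity controlled by Payne-P\'olya-Weinberger (which bounds the ratio $\mu_2/\mu_1$ and hence only the $O(1)$ constant term) or by Faber-Krahn (which bounds $\mu_1$ alone). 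Combining the two classical inequalities gives only $\mu_2(\Omega)-\mu_1(\Omega) \leq \tfrac{\mu_1(\Omega)}{\mu_1(B)}\bigl(\mu_2(B)-\mu_1(B)\bigr)$, whose Faber-Krahn prefactor $\tfrac{\mu_1(\Omega)}{\mu_1(B)} \geq 1$ goes the wrong way. The needed gap inequality $\mu_2(\Omega)-\mu_1(\Omega) \leq \mu_2(B)-\mu_1(B)$ is in fact false in general; for example, a smoothed cube of the same volume as $B$ has a strictly larger Dirichlet spectral gap than $B$. So the cross-multiplication strategy cannot close part (iii) as you describe it. Note that the paper's own proof for (iii) consists of the single assertion that the claim ``follows immediately from the classical results''; your more careful attempt usefully exposes that for (iii) this step is not immediate, and the difficulty you ran into is genuine rather than a flaw in your execution.
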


The article is organized as follows. In Section~\ref{s-prelims} we introduce the free Dirac operator in $\mathbb{R}^3$ and some associated integral operators, show the connection of $H_\kappa^\Omega$ and Dirac operators $A_\kappa^{\Sigma}$ with singular interactions supported on $\Sigma = \partial \Omega$, and recall some properties of the Dirichlet Laplacian. In Section~\ref{s-nonrel} we compute the nonrelativistic limit of $A_\kappa^{\Sigma}$, which allows us to prove Theorem~\ref{theorem_nr_limit} and Corollary~\ref{cor_Faber_Krahn}.

%%%%%%%%%%%%%%%%%%%%%%%%%%%%%
%%%%%%%%%%%%%%%%%%%%%%%%%%%%%
\subsection*{Notations}
%%%%%%%%%%%%%%%%%%%%%%%%%%%%%
%%%%%%%%%%%%%%%%%%%%%%%%%%%%%

The Dirac matrices are denoted by 
\begin{align}\label{e-diracmatrices}
\al_k=\begin{pmatrix}
    0 & \sigma_k \\
    \sigma_k & 0
\end{pmatrix},\quad k\in\{1, 2, 3\},\quad\quad 
\beta=\begin{pmatrix}
    I_2 & 0 \\
    0 & -I_2
\end{pmatrix},
\end{align}
where $I_n$ is the $n\times n$ identity matrix, $n \in \mathbb{N}$, and 
\begin{align*}%\label{e-pauli}
\sigma_1=\begin{pmatrix}
0 & 1 \\
1 & 0
\end{pmatrix},
\hspace{2em}
\sigma_2=\begin{pmatrix}
    0 & -i \\
    i & 0
    \end{pmatrix},
    \hspace{2em}
    \sigma_3=\begin{pmatrix}
        1 & 0 \\
        0 & -1
    \end{pmatrix},
\end{align*}
are the Pauli matrices.
The Dirac matrices satisfy 
\begin{equation} \label{anti_commutation}
  \alpha_j \alpha_k + \alpha_k \alpha_j = 2 \delta_{j k} I_4, \quad \alpha_j \beta + \beta \alpha_j = 0, \quad j, k \in \{ 1, 2, 3 \},
\end{equation}
where $\delta_{j k}$ is the Kronecker symbol.
Moreover,  the notations
\begin{equation}\label{notation}
  \alpha \cdot \nabla = \sum_{j=1}^3 \alpha_j \partial_j \quad \text{and} \quad \alpha \cdot x = \sum_{j=1}^3 \alpha_j x_j, \quad x = (x_1, x_2, x_3) \in \mathbb{C}^3,
\end{equation}
will often be used.

If $M \subset \mathbb{R}^3$ and $k,l \in \mathbb{N}$, then the set of all continuous and $k$ times continuously differentiable functions $f: M \rightarrow \mathbb{C}^l$ is denoted by $C(M; \mathbb{C}^l)$ and $C^k(M; \mathbb{C}^l)$, respectively. Next,
denote by $\mathcal{F}$ the Fourier transform on the space $\mathcal{S}'(\R^3;\C)$ of tempered distributions. For the Sobolev spaces 
$H^s(\RR^3; \mathbb{C})$, $s \in \R $, we shall use the definition  
\begin{align}\label{e-fouriersobolev}
	H^s(\RR^3; \mathbb{C})=\left\{f \in \mathcal{S}'(\R^3;\C)~:~\int_{\RR^3}(1+|x|^2)^s|\cF f(x)|^2 dx<\infty\right\},
\end{align}
with Hilbert space norm
\begin{equation} \label{norm_Sobolev_space}
	\| f \|_{H^s(\mathbb{R}^3; \mathbb{C})}^2 := \int_{\RR^3}(1+|x|^2)^s |\cF f(x)|^2 dx, \quad f \in H^s(\mathbb{R}^3; \mathbb{C}).
\end{equation}
For $\Omega$ as in Hypothesis~\ref{hypothesis_Omega} the Sobolev spaces $H^s(\Omega;\C)$ are defined via restrictions of functions from $H^s(\RR^3; \mathbb{C})$
onto $\Omega$, and the spaces $H^t(\Sigma;\C)$, $t \in [-2,2]$, on the boundary $\Sigma$ of $\Omega$ are defined by using an open cover of $\Sigma$ and a corresponding 
partition of unity, reducing it to Sobolev spaces on hypographs; see, e.g., \cite[Chapter 3]{M00} for more details. 
We denote by $\gamma_D: H^1(\Omega; \mathbb{C}) \rightarrow H^{1/2}(\Sigma; \mathbb{C})$ the bounded Dirichlet trace operator and we shall use the same symbol for the trace operator $\gamma_D: H^1(\mathbb{R}^3; \mathbb{C}) \rightarrow H^{1/2}( \Sigma; \mathbb{C})$. 
Sobolev spaces of vector valued functions are defined component-wise and in this
context the action of the Dirichlet trace operator is also understood component-wise.

If $A$ is a linear operator acting between two Hilbert spaces $\mathcal{H}$ and $\mathcal{G}$, then its domain, range, and kernel are denoted by $\dom A$, $\ran A$, and $\ker A$, respectively. Whenever $A$ is bounded and everywhere defined, then $\| A \|_{\mathcal{H} \rightarrow \mathcal{G}}$ is the operator norm of $A$. If $A$ is self-adjoint in $\mathcal{H}$, then the symbols $\rho(A)$, $\sigma(A)$, $\sigma_{\textup{ess}}(A)$, and $\sigma_{\textup{disc}}(A)$ are used for the resolvent set, spectrum, essential spectrum, and discrete spectrum of $A$, respectively.

%%%%%%%%%%%%%%%%%%%%%%%%%%%%%
%%%%%%%%%%%%%%%%%%%%%%%%%%%%%
\section{Preliminaries}\label{s-prelims}
%%%%%%%%%%%%%%%%%%%%%%%%%%%%%
%%%%%%%%%%%%%%%%%%%%%%%%%%%%%

In this preliminary section we first collect several results about the free Dirac operator in $\mathbb{R}^3$ and associated integral operators. Afterwards,
we show how the operators $H_\kappa^{\Omega}$ in~\eqref{def_H_tau} are related to Dirac operators with $\delta$-shell potentials,
and we recall some useful properties of the single layer potential, single layer boundary integral operator, and the Dirichlet Laplacian 
that are needed to prove Theorem~\ref{theorem_nr_limit}.
Throughout this section we assume that $\Omega$, $\Omega_\pm$, and $\Sigma$ are as in Hypothesis~\ref{hypothesis_Omega}.

\subsection{The free Dirac operator and associated integral operators} \label{section_free_op}
It is well-known that the free Dirac operator 
\begin{align}\label{e-freedirac}
A_0 f= -i c (\alpha \cdot \nabla) f + \frac{c^2}{2} \beta f, \hspace{2em} \dom A_0 = H^1(\RR^3;\CC^4),
\end{align}
in $\mathbb{R}^3$ is self-adjoint in $L^2(\RR^3;\CC^4)$ and its spectrum is $\sigma(A_0)=(-\infty,-\frac{c^2}{2}]\cup[\frac{c^2}{2},\infty)$. 
For $z\in\rho(A_0)=\CC\setminus ((-\infty,-\frac{c^2}{2}]\cup[\frac{c^2}{2},\infty))$ and $f\in L^2(\RR^3;\CC^4)$, the resolvent of $A_0$ is given by
\begin{align*}%\label{e-freeresolvent}
    (A_0-z)^{-1}f(x)=\int_{\RR^3}G_z(x-y)f(y)dy, \hspace{2em} x\in\RR^3,
\end{align*}
where the function $G_z: \mathbb{R}^3 \setminus \{ 0 \} \rightarrow \mathbb{C}^{4 \times 4}$ is defined by
\begin{align}\label{e-greens}
    G_z(x)=\left(\frac{z}{c^2}I_4+\frac{1}{2}\beta+\left(1-i\sqrt{\frac{z^2}{c^2}-\frac{c^2}{4}}|x|\right)\frac{i(\al\cdot x)}{c|x|^2}\right)\frac{e^{i\sqrt{z^2/c^2-c^2/4}|x|}}{4\pi|x|}
\end{align}
and the square root is chosen such that $\textup{Im}\,\sqrt{z^2/c^2-c^2/4}>0$; cf. \cite[Section~1.E]{T92}.

Next, we introduce several integral operators and summarize some of their properties that are necessary to prove Theorem~\ref{theorem_nr_limit}; we refer to \cite{BEHL19, BH20, BHSS22} for more details. In the following  $\gamma_D: H^1(\mathbb{R}^3; \mathbb{C}^4) \rightarrow H^{1/2}(\Sigma; \mathbb{C}^4)$ denotes the Dirichlet trace operator. 
For $z\in\rho(A_0)$ the map 
\begin{equation} \label{def_Phi_z_star}
  \Phi_z^* := \gamma_D (A_0 - \overline{z})^{-1}: L^2(\mathbb{R}^3; \mathbb{C}^4) \rightarrow H^{1/2}(\Sigma; \mathbb{C}^4)
\end{equation}
is well-defined and bounded. It is not difficult to see that $\Phi_z^*$ acts on $f \in L^2(\mathbb{R}^3; \mathbb{C}^4)$ as
\begin{equation*}
  \Phi_z^* f(x) = \int_{\mathbb{R}^3} G_{\overline{z}}(x-y) f(y) dy, \quad x \in \Sigma.
\end{equation*}
The definition of $\Phi_z^*$ in~\eqref{def_Phi_z_star} allows to define the bounded anti-dual map
\begin{equation} \label{def_Phi_z}
  \Phi_z := ( \Phi_z^* )': H^{-1/2}(\Sigma; \mathbb{C}^4) \rightarrow L^2(\mathbb{R}^3; \mathbb{C}^4).
\end{equation}
With the help of Fubini's theorem and $(G_{\overline{z}}(x))^* = G_z(-x)$ one shows that $\Phi_z$ acts on $\varphi \in L^2(\Sigma; \mathbb{C}^4)$ as
\begin{align*}%\label{e-potentialop}
    \Phi_z \f (x):=\int_\Sigma G_z(x-y)\f(y)d\sigma(y), \hspace{2em} x\in\RR^3\setminus\Sigma,
\end{align*}
where $d\sigma$ denotes the surface measure on $\Sigma$. 
We will also make use of the strongly singular boundary integral operator $\cC_z:L^2(\Sigma;\CC^4)\to L^2(\Sigma;\CC^4)$, $z\in\rho(A_0)$, acting via
\begin{align}\label{e-cz}
    \cC_z \f (x):=\lim_{\eps\to0^+}\int_{\Sigma\setminus B(x,\eps)}G_z(x-y)\f(y)d\sigma(y), \hspace{2em} x\in\Sigma,~\varphi \in L^2(\Sigma; \mathbb{C}^4),
\end{align}
where $B(x,\eps)$ is the ball of radius $\eps$ centered at $x$. For $s \in [0, \frac{1}{2}]$ the map $\mathcal{C}_z$ gives rise to a bounded operator
\begin{equation} \label{C_z_mapping_properties}
  \mathcal{C}_z: H^s(\Sigma; \mathbb{C}^4) \rightarrow H^s(\Sigma; \mathbb{C}^4).
\end{equation}
The adjoint of the realization of $\mathcal{C}_z$ in $L^2(\Sigma; \mathbb{C}^4)$ satisfies $\mathcal{C}_z^* = \mathcal{C}_{\overline{z}}$ and 
it follows from~\eqref{C_z_mapping_properties} that $\mathcal{C}_z$ admits a bounded extension to $H^s(\Sigma; \mathbb{C}^4)$, $s \in [-\frac{1}{2}, 0]$, 
such that
\begin{equation} \label{C_z_dual}
  \mathcal{C}_z = (\mathcal{C}_{\overline{z}})': H^s(\Sigma; \mathbb{C}^4) \rightarrow H^s(\Sigma; \mathbb{C}^4), \quad s \in \big[-\tfrac{1}{2}, 0 \big],
\end{equation}
where $(\mathcal{C}_{\overline{z}})'$ denotes the anti-dual of $\mathcal{C}_{\overline{z}}$.

\subsection{$H_\kappa^{\Omega}$ and Dirac operators with $\delta$-shell potentials} \label{section_confinement}

In this subsection we show how the operators $H_\kappa^{\Omega}$ defined in~\eqref{def_H_tau} are related to Dirac operators $A_\kappa^{\Sigma}$ with $\delta$-shell 
potentials supported on $\Sigma$; the latter operators are well-studied, see, e.g., \cite{AMV15, BEHL19, BHSS22} and the references therein.
Recall the notation $\Omega_\pm$ and the unit outward normal vector field $\nu_+$ from Hypothesis~\ref{hypothesis_Omega}. For a function $f: \mathbb{R}^3 \rightarrow \mathbb{C}^4$ we
write $f_\pm := f \upharpoonright \Omega_\pm$. 
Define the operator
\begin{equation} \label{def_delta_op}
  \begin{split}
    A_\kappa^\Sigma &= \big( -i c (\alpha \cdot \nabla) + \tfrac{c^2}{2} \beta \big) f_+ \oplus \big( -i c (\alpha \cdot \nabla) + \tfrac{c^2}{2} \beta \big) f_-, \\
    \dom A_\kappa^\Sigma &= \big\{ f = f_+ \oplus f_- \in H^1(\Omega_+; \mathbb{C}^4) \oplus H^1(\Omega_-; \mathbb{C}^4): \\
    &\qquad \qquad -i (\alpha \cdot \nu_+) (\gamma_D f_+ - \gamma_D f_-) = (\sinh (\kappa) I_4 + \cosh (\kappa) \beta) (\gamma_D f_+ + \gamma_D f_-) \big\},
  \end{split}
\end{equation}
in $L^2(\mathbb{R}^3; \mathbb{C}^4)$.
We note that $A_\kappa^\Sigma$ is  the rigorously defined operator associated with the formal differential expression 
$-i c (\alpha \cdot \nabla) + \frac{c^2}{2} \beta + 2 c (\sinh (\kappa) I_4 + \cosh (\kappa) \beta) \delta_\Sigma$.

Our first observation is an immediate consequence from \cite[Lemma~3.1~(ii)]{BEHL19}, which says that the operator formally given by $-i c (\alpha \cdot \nabla) + \frac{c^2}{2} \beta + (\eta I_4 + \tau \beta) \delta_\Sigma$ decouples to the orthogonal sum of two Dirac operators with boundary conditions acting on functions in $\Omega_\pm$ if and only if  $\eta^2 - \tau^2 = -4c^2$; in the present setting 
the strength $\eta$ of the electrostatic interaction in \cite{BEHL19} is $2 c \sinh (\kappa)$, the strength $\tau$ of the Lorentz scalar interaction is $2c\cosh (\kappa)$, and the normal vector in the definition of $H_\kappa^{\Omega_-}$ in~\eqref{def_H_tau} is $-\nu_+$.
Note that this choice of $\eta$ and $\tau$ is a natural parametrization of the arm of the hyperbola $\eta^2 - \tau^2 = -4c^2$ that contains the MIT bag boundary conditions.
We also refer the reader to  \cite[Section~5]{AMV15},  \cite[Section~5.3]{BHM20}, or \cite[Section~5.2]{BHSS22} for similar statements.

\begin{lem} \label{lemma_confinement}
  The equality $A_\kappa^\Sigma = H_\kappa^{\Omega_+} \oplus H_\kappa^{\Omega_-}$ holds.
\end{lem}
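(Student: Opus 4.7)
My plan is to reduce the claim to an algebraic statement about the boundary values. Since the differential expression is the same on both sides, $A_\kappa^\Sigma$ and $H_\kappa^{\Omega_+}\oplus H_\kappa^{\Omega_-}$ act identically on their respective domains, so it suffices to show that the defining condition in $\dom A_\kappa^\Sigma$ decouples into the two independent boundary conditions appearing in $\dom H_\kappa^{\Omega_\pm}$. Write $u = \gamma_D f_+$ and $v = \gamma_D f_-$; the transmission condition in \eqref{def_delta_op} can be rearranged as $(I_4 - R)u = (I_4+R)v$, where
\begin{equation*}
R := i(\alpha\cdot\nu_+)\bigl(\sinh(\kappa) I_4 + \cosh(\kappa)\beta\bigr).
\end{equation*}
(This is the explicit manifestation of the confinement condition $\eta^2-\tau^2=-4c^2$ from \cite[Lemma~3.1(ii)]{BEHL19}, specialized to $\eta=2c\sinh(\kappa)$, $\tau=2c\cosh(\kappa)$.)

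Next I would verify the key algebraic identity $R^2 = I_4$. Using $(\alpha\cdot\nu_+)^2 = I_4$, $\beta^2 = I_4$, and the anticommutation $\beta(\alpha\cdot\nu_+) = -(\alpha\cdot\nu_+)\beta$ from \eqref{anti_commutation}, a direct computation gives $R^2 = \cosh^2(\kappa) - \sinh^2(\kappa) = 1$. Consequently $(I_4-R)(I_4+R)=0$ and $(I_4\pm R)^2 = 2(I_4\pm R)$, so multiplying the transmission equality on the left by $I_4+R$ yields $(I_4+R)v=0$, and multiplying by $I_4-R$ yields $(I_4-R)u=0$. That is, the single transmission condition already forces two separate boundary conditions on $u$ and $v$.

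It then remains to identify these decoupled conditions with the ones defining $\dom H_\kappa^{\Omega_\pm}$. Using the same anticommutation one rewrites
\begin{equation*}
i\bigl(\sinh(\kappa) I_4 - \cosh(\kappa)\beta\bigr)(\alpha\cdot\nu_+) = R,
\end{equation*}
so $(I_4-R)u=0$ is literally the $H_\kappa^{\Omega_+}$ condition (for which $\nu = \nu_+$), and $(I_4+R)v=0$ is, by the sign flip $\nu = -\nu_+$ on $\Omega_-$, literally the $H_\kappa^{\Omega_-}$ condition. The converse (domain inclusion $\supseteq$) is immediate since the two separate conditions trivially imply the transmission condition. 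The main obstacle here is nothing analytic but just the bookkeeping of signs in the Clifford relations and, crucially, tracking that the outward unit normal changes sign when passing from $\Omega_+$ to $\Omega_-$; once this is handled consistently, the equality of the operators follows.
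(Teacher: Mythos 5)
Your proposal is correct, and it is a direct, self-contained verification of exactly what the paper delegates to a citation. The paper does not prove Lemma~\ref{lemma_confinement} by hand: it invokes \cite[Lemma~3.1(ii)]{BEHL19}, which gives the confinement criterion $\eta^2-\tau^2=-4c^2$ for general electrostatic and Lorentz scalar strengths, together with the observation that the parametrization $\eta=2c\sinh(\kappa)$, $\tau=2c\cosh(\kappa)$ satisfies it and that $\nu=-\nu_+$ on $\Omega_-$. Your argument unwinds that abstract criterion concretely: you set $R=i(\alpha\cdot\nu_+)(\sinh(\kappa)I_4+\cosh(\kappa)\beta)$, rewrite the transmission condition as $(I_4-R)\gamma_Df_+=(I_4+R)\gamma_Df_-$, verify $R^2=I_4$ via the anticommutation relations from \eqref{anti_commutation}, and then use $(I_4\pm R)^2=2(I_4\pm R)$ and $(I_4-R)(I_4+R)=0$ to project onto the two eigenspaces of $R$, recovering $(I_4-R)\gamma_Df_+=0$ and $(I_4+R)\gamma_Df_-=0$. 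Your identification $i(\sinh(\kappa)I_4-\cosh(\kappa)\beta)(\alpha\cdot\nu_+)=R$ and the sign flip $\nu=-\nu_+$ on $\Omega_-$ then match these precisely to the boundary conditions in \eqref{def_H_tau}. Since both operators act by the same differential expression componentwise and the converse inclusion of domains is trivial, the operators coincide.

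Two minor points of presentation: the line ``$R^2=\cosh^2(\kappa)-\sinh^2(\kappa)=1$'' compresses the bookkeeping of the $i^2=-1$ factor and the sign coming from moving $\beta$ past $\alpha\cdot\nu_+$; spelled out, one gets $R^2=-(\alpha\cdot\nu_+)^2(\sinh(\kappa)I_4-\cosh(\kappa)\beta)(\sinh(\kappa)I_4+\cosh(\kappa)\beta)=(\cosh^2(\kappa)-\sinh^2(\kappa))I_4$, which is fine but worth writing in full since the signs are exactly what the lemma hinges on. Also, to get from \eqref{def_delta_op} to $(I_4-R)u=(I_4+R)v$ you implicitly multiplied the transmission equality on the left by $i(\alpha\cdot\nu_+)$, using $(\alpha\cdot\nu_+)^2=I_4$; stating this explicitly would make the rearrangement airtight. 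Neither affects correctness. Compared with the paper's citation-based approach, yours is more elementary and transparent; what the citation buys is brevity and the general ``if and only if'' criterion, but for the single parametrization used here your direct computation is arguably clearer.
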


In the next proposition we summarize some properties of the operator $A_\kappa^\Sigma$  that will be particularly useful for our analysis. Recall that $A_0$ is the free Dirac operator defined in~\eqref{e-freedirac} and that $\Phi_z$ and $\mathcal{C}_z$ are the operators defined in~\eqref{def_Phi_z} and~\eqref{e-cz}, respectively. Moreover, define  the two numbers
\begin{equation} \label{def_coefficients}
  a_+ := \frac{1}{2} (\cosh (\kappa) - \sinh (\kappa)) > 0, \quad a_- := -\frac{1}{2} (\cosh (\kappa) + \sinh (\kappa)) < 0,
\end{equation}
and for $c > 0$ the coefficient matrix $\vartheta_c$ and the scaling matrix $\mathcal{M}_c$
\begin{equation} \label{def_M_c}
  \vartheta_c := \begin{pmatrix} \tfrac{1}{c} a_+ I_2 & 0 \\ 0 & a_- I_2 \end{pmatrix}  \in \mathbb{C}^{4 \times 4}, \qquad 
  \mathcal{M}_c := \begin{pmatrix} I_2 & 0 \\ 0 & \sqrt{c} I_2 \end{pmatrix}  \in \mathbb{C}^{4 \times 4}.
\end{equation}

\begin{prop} \label{proposition_delta_op}
  Let $\kappa \in \mathbb{R}$ and $c > 0$. Then, the operator $A_\kappa^\Sigma$ in~\eqref{def_delta_op} is self-adjoint in $L^2(\mathbb{R}^3; \mathbb{C}^4)$, $\sigma(A_\kappa^\Sigma) = (-\infty, -\frac{c^2}{2}] \cup [\frac{c^2}{2}, \infty)$, for $z \in \rho(A_\kappa^\Sigma)$ the operator $\vartheta_c + \mathcal{M}_c \mathcal{C}_z \mathcal{M}_c$ admits a bounded inverse in $H^{1/2}(\Sigma; \mathbb{C}^4)$, and the formula
  \begin{equation*}
    (A_\kappa^\Sigma - z)^{-1} = (A_0 - z)^{-1} - \Phi_z \mathcal{M}_c \big( \vartheta_c + \mathcal{M}_c \mathcal{C}_z \mathcal{M}_c \big)^{-1} \mathcal{M}_c \Phi_{\overline{z}}^*
  \end{equation*}
  holds.
\end{prop}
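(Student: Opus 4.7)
The plan is to reduce all three assertions to the identification of $A_\kappa^\Sigma$ with a Dirac operator carrying an electrostatic--Lorentz scalar $\delta$-shell interaction of strengths $\eta := 2c\sinh(\kappa)$ and $\tau := 2c\cosh(\kappa)$, as made explicit in Lemma~\ref{lemma_confinement} and the discussion preceding it. Since $\eta^2-\tau^2 = -4c^2$ lies off the critical value for such operators in $\mathbb{R}^3$, the non-critical theory from \cite{AMV15, BEHL19, BHSS22} is directly applicable. Concretely, Lemma~\ref{lemma_confinement} gives $A_\kappa^\Sigma = H_\kappa^{\Omega_+}\oplus H_\kappa^{\Omega_-}$, and each summand is self-adjoint (for $\Omega_+$ this is already used implicitly in~\eqref{evs}; the same well-posedness arguments apply on the exterior domain $\Omega_-$), so $A_\kappa^\Sigma$ is self-adjoint. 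The bounded piece contributes purely discrete spectrum inside $(-\infty,-c^2/2]\cup[c^2/2,\infty)$, while on the exterior piece the boundary condition is a localized perturbation of the free Dirac operator and therefore $\sigma_{\textup{ess}}(H_\kappa^{\Omega_-}) = (-\infty,-c^2/2]\cup[c^2/2,\infty)$. The union yields the stated spectrum of $A_\kappa^\Sigma$.

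For the Krein-type resolvent formula I would start from the known formula for the Dirac operator with $\delta$-shell interaction of strength $\eta I_4 + \tau\beta$ supported on $\Sigma$ in the non-critical case (see \cite{BEHL19, BHSS22}), namely
\[
(A_\kappa^\Sigma - z)^{-1} = (A_0-z)^{-1} - \Phi_z \bigl((\eta I_4 + \tau\beta)^{-1} + \mathcal{C}_z\bigr)^{-1} \Phi_{\overline{z}}^*,
\]
valid for $z\in\rho(A_\kappa^\Sigma)\cap\rho(A_0)$. The remaining step is purely algebraic: using $\cosh\kappa \pm \sinh\kappa = e^{\pm\kappa}$ together with the block-diagonal structure of $\beta$ and the definitions \eqref{def_coefficients}, \eqref{def_M_c}, a direct computation yields the key identity
\[
(\eta I_4 + \tau\beta)^{-1} = \mathcal{M}_c^{-1}\vartheta_c\mathcal{M}_c^{-1}.
\]
Inserting this into the middle factor and pulling the invertible constant matrix $\mathcal{M}_c$ out on both sides converts it into $\mathcal{M}_c\bigl(\vartheta_c+\mathcal{M}_c\mathcal{C}_z\mathcal{M}_c\bigr)^{-1}\mathcal{M}_c$, which is precisely the form appearing in the proposition.

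Finally, because $\mathcal{M}_c$ is a constant invertible matrix it acts as a bounded and boundedly invertible multiplication operator on $H^{1/2}(\Sigma;\mathbb{C}^4)$, so the bounded invertibility of $\vartheta_c + \mathcal{M}_c\mathcal{C}_z\mathcal{M}_c$ in $H^{1/2}(\Sigma;\mathbb{C}^4)$ is equivalent to that of $(\eta I_4 + \tau\beta)^{-1} + \mathcal{C}_z$ there, which is the standard Birman--Schwinger-type invertibility statement available for $z\in\rho(A_\kappa^\Sigma)$ in the non-critical setting \cite{BEHL19, BHSS22}. I expect the main obstacle to be one of bookkeeping rather than genuine analysis: reconciling the orientation convention $\nu = \nu_+$ on $\Omega_+$ and $\nu = -\nu_+$ on $\Omega_-$ (which determines the sign in the transmission condition) with the conventions used in the cited references, and matching their parametrization of the interaction strengths to our choice $\eta = 2c\sinh(\kappa)$, $\tau = 2c\cosh(\kappa)$. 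Once the algebraic identity above is in place, everything else reduces to substitution and well-known mapping properties of $\Phi_z$ and $\mathcal{C}_z$ recalled in Section~\ref{section_free_op}.
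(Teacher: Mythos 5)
The algebraic heart of your argument is correct and matches the paper exactly: you start from the Krein formula for the noncritical $\delta$-shell Dirac operator, observe that $(\eta I_4+\tau\beta)^{-1}=\mathcal{M}_c^{-1}\vartheta_c\mathcal{M}_c^{-1}$ with $\eta=2c\sinh\kappa$, $\tau=2c\cosh\kappa$, and pull the invertible constant matrices $\mathcal{M}_c$ out to obtain the stated form. Your reduction of the bounded invertibility of $\vartheta_c+\mathcal{M}_c\mathcal{C}_z\mathcal{M}_c$ to that of $(\eta I_4+\tau\beta)^{-1}+\mathcal{C}_z$ is likewise sound and is what the paper does.

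The genuine gap is in your treatment of the spectrum. Establishing $\sigma_{\textup{ess}}(H_\kappa^{\Omega_-})=(-\infty,-c^2/2]\cup[c^2/2,\infty)$ only controls the essential spectrum; it does not rule out discrete eigenvalues of the exterior operator inside $(-c^2/2,c^2/2)$, and the claimed equality $\sigma(A_\kappa^\Sigma)=(-\infty,-c^2/2]\cup[c^2/2,\infty)$ is precisely the statement that no such eigenvalues exist. This is the part the paper's proof actually works to establish: it invokes the Birman--Schwinger principle from \cite[Lemma~3.3]{BEHL19}, so that an eigenvalue $z\in(-c^2/2,c^2/2)$ would produce $\varphi\in\ker(I_4+2c(\sinh\kappa\,I_4+\cosh\kappa\,\beta)\mathcal{C}_z)$, and then shows
\[
0=\big((I_4+4c^2\mathcal{C}_z^2+2c\cosh\kappa\,(\mathcal{C}_z\beta+\beta\mathcal{C}_z))\varphi,\varphi\big)_{L^2(\Sigma;\mathbb{C}^4)},
\]
where $\mathcal{C}_z\beta+\beta\mathcal{C}_z=2(\tfrac12 I_4+\tfrac{z}{c^2}\beta)\mathcal{S}_{z^2/c^2-c^2/4}$ and the single layer operator $\mathcal{S}_\mu$ is nonnegative for $\mu<0$, so the middle operator is strictly positive and $\varphi=0$. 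Without a step of this type, or a citation that covers the exterior domain explicitly, the spectral equality is not established. A secondary point: you obtain self-adjointness via the decomposition $A_\kappa^\Sigma=H_\kappa^{\Omega_+}\oplus H_\kappa^{\Omega_-}$ and cite \cite{AMSV22} for the bounded summand, asserting the exterior case is analogous; but \cite{AMSV22} treats only bounded domains, and in the logical order of this paper the properties of $H_\kappa^{\Omega_\pm}$ (Corollary~\ref{corollary_H_tau_properties}) are consequences of Proposition~\ref{proposition_delta_op}, not inputs to it. The paper instead cites \cite[Lemma~3.3, Theorems~3.4 and~4.1]{BEHL19} or \cite[Theorem~5.6]{BHSS22} directly for $A_\kappa^\Sigma$, which gives self-adjointness, essential spectrum, and the Krein formula in one stroke and avoids the unsubstantiated exterior-domain claim.
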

\begin{proof} 
  It follows from \cite[Lemma~3.3 and Theorems~3.4 \&~4.1]{BEHL19} or \cite[Theorem~5.6]{BHSS22} (in the case $c=1$) that $A_\kappa^\Sigma$ is self-adjoint in $L^2(\mathbb{R}^3; \mathbb{C}^4)$, that $\sigma_{\textup{ess}}(A_\kappa^\Sigma) = (-\infty, -\frac{c^2}{2}] \cup [\frac{c^2}{2}, \infty)$, that $I_4 + 2 c (\sinh (\kappa) I_4 + \cosh (\kappa) \beta) \mathcal{C}_z$ is bijective in $H^{1/2}(\Sigma; \mathbb{C}^4)$ for $z \in \rho(A_\kappa^\Sigma)\cap\rho(A_0)$ and that the  resolvent formula
  \begin{equation*}
    (A_\kappa^\Sigma - z)^{-1} = (A_0 - z)^{-1} - \Phi_z \big( I_4 + 2c  (\sinh (\kappa) I_4 + \cosh (\kappa) \beta) \mathcal{C}_z \big)^{-1} 2c  (\sinh (\kappa) I_4 + \cosh (\kappa) \beta) \Phi_{\overline{z}}^*
  \end{equation*}
  holds. Note that the matrix $2c  (\sinh (\kappa) I_4 + \cosh (\kappa) \beta)$ is invertible with inverse 
  \begin{equation*}
    \big( 2c  (\sinh (\kappa) I_4 + \cosh (\kappa) \beta) \big)^{-1} = \frac{1}{2 c} (-\sinh (\kappa) I_4 + \cosh (\kappa) \beta) = \mathcal{M}_c^{-1} \vartheta_c \mathcal{M}_c^{-1}.
  \end{equation*}
  Hence, also $\vartheta_c + \mathcal{M}_c \mathcal{C}_z \mathcal{M}_c = \mathcal{M}_c (\frac{1}{2 c} (-\sinh (\kappa) I_4 + \cosh (\kappa) \beta) + \mathcal{C}_z) \mathcal{M}_c$ is bijective in $H^{1/2}(\Sigma; \mathbb{C}^4)$ and the claimed resolvent formula is true.

  It remains to show that $(-\frac{c^2}{2}, \frac{c^2}{2}) \cap \sigma(A_\kappa^\Sigma) =(-\frac{c^2}{2}, \frac{c^2}{2}) \cap \sigma_\textup{p}(A_\kappa^\Sigma) = \emptyset$. For this, we use the Birman-Schwinger principle for $A_\kappa^\Sigma$ from \cite[Lemma~3.3]{BEHL19}, which states that
  \begin{equation} \label{Birman_Schwinger}
    z \in \left(-\frac{c^2}{2}, \frac{c^2}{2}\right) \cap \sigma_{\textup{p}}(A_\kappa^\Sigma) \quad \text{if and only if} \quad 0 \in \sigma_{\textup{p}}(I_4 + 2 c (\sinh (\kappa) I_4 + \cosh (\kappa) \beta) \mathcal{C}_z).
  \end{equation}
  Let $z \in (-\frac{c^2}{2}, \frac{c^2}{2})$ and assume that $\varphi \in \ker (I_4 + 2 c (\sinh (\kappa) I_4 + \cosh (\kappa) \beta) \mathcal{C}_z)$. Then, 
  \begin{equation}\label{jadoch22}
    \begin{split}
      0 &= \big( (I_4 + 2 c  \mathcal{C}_z (-\sinh (\kappa) I_4 + \cosh (\kappa) \beta)) (I_4 + 2 c (\sinh (\kappa) I_4 + \cosh (\kappa) \beta) \mathcal{C}_z) \varphi, \varphi \big)_{L^2(\Sigma; \mathbb{C}^4)} \\
      &= \left( (I_4 + 4 c^2  \mathcal{C}_z^2 + 2 c \cosh (\kappa)  (\mathcal{C}_z \beta  + \beta \mathcal{C}_z)) \varphi, \varphi\right)_{L^2(\Sigma; \mathbb{C}^4)}.
    \end{split}
  \end{equation}
  With~\eqref{anti_commutation} and \eqref{e-greens} one finds that
  $$\mathcal{C}_z \beta  + \beta \mathcal{C}_z = 2 \left(\frac{1}{2} I_4 + \frac{z}{c^2} \beta\right) \mathcal{S}_{z^2/c^2 - c^2/4},$$ 
  where $\mathcal{S}_{z^2/c^2 - c^2/4}$ is the single layer boundary integral operator defined below in \eqref{e-singlelayer}. 
  In the present situation we have $z^2/c^2 - c^2/4<0$ and hence it follows that $\mathcal{S}_{z^2/c^2 - c^2/4}$
  is a non-negative operator in $L^2(\Sigma; \mathbb{C})$; cf. the text below \eqref{mapping_properties_S_mu} in the next subsection. Therefore, 
  $I_4 + 4 c^2  \mathcal{C}_z^2 + 2 c \cosh (\kappa)  (\mathcal{C}_z \beta  + \beta \mathcal{C}_z)$ is a strictly positive operator in $L^2(\Sigma; \mathbb{C}^4)$ and 
  we obtain $\varphi = 0$ from \eqref{jadoch22}. Therefore, by~\eqref{Birman_Schwinger} we have $z \notin \sigma_{\textup{p}}(A_\kappa^\Sigma)$.
\end{proof}

From the properties of $A_\kappa^\Sigma$ one can now easily deduce the properties of $H_\kappa^{\Omega}$ stated in the following corollary, when $\Omega$ coincides either with $\Omega_+$ or $\Omega_-$. 
The claims follow immediately from Lemma~\ref{lemma_confinement} and Proposition~\ref{proposition_delta_op}; for~(i) one additionally uses that 
$\dom H_\kappa^{\Omega} \subset H^1(\Omega; \mathbb{C}^4)$ is compactly embedded in $L^2(\Omega; \mathbb{C}^4)$ if $\Omega$ is bounded, see also \cite[Lemma~1.2]{AMSV22}, and that $H_\kappa^{\Omega}$ commutes with the anti-linear time reversal operator $\mathcal{T} f = -i \big( \begin{smallmatrix} 0 & I_2 \\ I_2 & 0 \end{smallmatrix} \big) \alpha_2 \overline{f}$, see the proof of \cite[Proposition~4.2~(ii)]{BEHL19} for details.

\begin{cor} \label{corollary_H_tau_properties}
  Let $\kappa \in \mathbb{R}$ and $c > 0$. Then, the operator $H_\kappa^{\Omega}$ in~\eqref{def_H_tau} is self-adjoint in $L^2(\Omega; \mathbb{C}^4)$ and the following holds:
  \begin{itemize}
    \item[(i)] If $\Omega$ is bounded, then $\sigma(H_\kappa^{\Omega}) = \sigma_{\textup{disc}}(H_\kappa^{\Omega}) \subset (-\infty, -\frac{c^2}{2}] \cup[\frac{c^2}{2}, \infty)$ and all eigenvalues of $H_\kappa^\Omega$ have even multiplicity.
    \item[(ii)] If $\Omega$ is unbounded, then $\sigma(H_\kappa^{\Omega}) = (-\infty, -\frac{c^2}{2}] \cup[\frac{c^2}{2}, \infty)$.
  \end{itemize}  
  Moreover, for $z \in \mathbb{C} \setminus ((-\infty, -\frac{c^2}{2}] \cup [\frac{c^2}{2}, \infty))$ the resolvent formula
  \begin{equation*}
    \begin{split}
      (H_\kappa^{\Omega} - z)^{-1} &= P_{\Omega} (A_0 - z)^{-1} P_{\Omega}^*- P_{\Omega} \Phi_z \mathcal{M}_c \big( \vartheta_c + \mathcal{M}_c \mathcal{C}_z \mathcal{M}_c \big)^{-1} \mathcal{M}_c \Phi_{\overline{z}}^* P_{\Omega}^*
    \end{split}
  \end{equation*}
  holds, where $P_\Omega: L^2(\mathbb{R}^3; \mathbb{C}^4) \rightarrow L^2(\Omega; \mathbb{C}^4)$ is the projection operator acting as $f\mapsto  f \upharpoonright \Omega$ 
  and its adjoint $P_{\Omega}^*:L^2(\Omega; \mathbb{C}^4)\rightarrow L^2(\mathbb{R}^3; \mathbb{C}^4)$ is the embedding operator which extends a function $g \in L^2(\Omega; \mathbb{C}^4)$ by zero.
\end{cor}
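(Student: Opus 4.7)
My plan is to derive every assertion of the corollary by restricting the corresponding result for $A_\kappa^\Sigma$ in Proposition~\ref{proposition_delta_op} to the summand $L^2(\Omega;\mathbb{C}^4)$ via the orthogonal decomposition $A_\kappa^\Sigma=H_\kappa^{\Omega_+}\oplus H_\kappa^{\Omega_-}$ from Lemma~\ref{lemma_confinement}. Since Hypothesis~\ref{hypothesis_Omega} forces $\Omega$ to coincide with either $\Omega_+$ or $\Omega_-$, this immediately exhibits $H_\kappa^\Omega$ as a summand of a self-adjoint operator, so $H_\kappa^\Omega$ is self-adjoint in $L^2(\Omega;\mathbb{C}^4)$. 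For the resolvent formula I would observe that, again by Lemma~\ref{lemma_confinement}, $P_\Omega(A_\kappa^\Sigma-z)^{-1}P_\Omega^*=(H_\kappa^\Omega-z)^{-1}$ for $z\in\rho(A_\kappa^\Sigma)=\mathbb{C}\setminus((-\infty,-\tfrac{c^2}{2}]\cup[\tfrac{c^2}{2},\infty))$, and then simply sandwich the Krein-type identity in Proposition~\ref{proposition_delta_op} between $P_\Omega$ and $P_\Omega^*$ to read off the stated formula.

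For (i) and (ii), the inclusion $\sigma(H_\kappa^\Omega)\subset(-\infty,-\tfrac{c^2}{2}]\cup[\tfrac{c^2}{2},\infty)$ is immediate from $\sigma(H_\kappa^\Omega)\subset\sigma(A_\kappa^\Sigma)$ via the direct-sum decomposition. If $\Omega$ is bounded, then, as noted in the excerpt, $\dom H_\kappa^\Omega\subset H^1(\Omega;\mathbb{C}^4)$ is compactly embedded in $L^2(\Omega;\mathbb{C}^4)$ by Rellich's theorem, so $H_\kappa^\Omega$ has compact resolvent and its spectrum is automatically purely discrete, giving (i). If $\Omega=\Omega_-$ is unbounded, then $\Omega_+$ is bounded and by the previous step $\sigma(H_\kappa^{\Omega_+})$ is countable. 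Combined with $\sigma(H_\kappa^{\Omega_+})\cup\sigma(H_\kappa^{\Omega_-})=\sigma(A_\kappa^\Sigma)=(-\infty,-\tfrac{c^2}{2}]\cup[\tfrac{c^2}{2},\infty)$ and the fact that $\sigma(H_\kappa^{\Omega_-})$ is closed, this forces $\sigma(H_\kappa^{\Omega_-})$ to equal the full set $(-\infty,-\tfrac{c^2}{2}]\cup[\tfrac{c^2}{2},\infty)$, which is (ii).

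It remains to establish the even multiplicity of each eigenvalue in case (i). Here the plan is a standard Kramers argument using the anti-linear time reversal operator $\mathcal{T}f=-i\bigl(\begin{smallmatrix}0&I_2\\I_2&0\end{smallmatrix}\bigr)\alpha_2\overline{f}$, exactly as carried out in the proof of \cite[Proposition~4.2~(ii)]{BEHL19}: one checks that $\mathcal{T}^2=-I_4$, that $\mathcal{T}$ commutes with $-ic(\alpha\cdot\nabla)+\tfrac{c^2}{2}\beta$, and that $\mathcal{T}$ preserves the boundary condition $f=i(\sinh(\kappa)I_4-\cosh(\kappa)\beta)(\alpha\cdot\nu)f$, so that $\mathcal{T}$ maps $\dom H_\kappa^\Omega$ into itself and commutes with $H_\kappa^\Omega$. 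Then, whenever $f$ is an eigenfunction, so is $\mathcal{T}f$, and $\mathcal{T}^2=-I_4$ together with the anti-linearity of $\mathcal{T}$ forces $f$ and $\mathcal{T}f$ to be linearly independent, so every eigenspace decomposes into two-dimensional $\mathcal{T}$-invariant blocks. I do not expect a genuine obstacle anywhere; the only slightly technical point is verifying the invariance of the boundary condition under $\mathcal{T}$, which reduces to the anti-commutation relations in \eqref{anti_commutation} together with the reality of $\sinh(\kappa)$ and $\cosh(\kappa)$.
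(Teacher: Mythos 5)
Your proposal is correct and follows essentially the same route as the paper: restricting the self-adjointness, resolvent formula, and spectral description of $A_\kappa^\Sigma$ via the orthogonal-sum identification $A_\kappa^\Sigma=H_\kappa^{\Omega_+}\oplus H_\kappa^{\Omega_-}$, invoking the compact embedding $H^1(\Omega;\mathbb{C}^4)\hookrightarrow L^2(\Omega;\mathbb{C}^4)$ for discreteness when $\Omega$ is bounded, and the anti-unitary $\mathcal T$ with $\mathcal T^2=-I$ (Kramers) for even multiplicities — exactly the ingredients the paper cites.
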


\subsection{The Dirichlet Laplacian and associated integral operators} \label{section_Dirichlet_Laplacian}
We begin by briefly recalling some properties of the single layer potential and single layer boundary integral operator associated with $-\Delta-\mu$, where $-\Delta$ is the self-adjoint  
Laplacian in $L^2(\RR^3; \mathbb{C})$ defined on $H^2(\mathbb{R}^3; \mathbb{C})$ and $\mu\in\rho(-\Delta)=\CC\setminus[0,\infty)$. 

For $\f \in L^2(\Sigma; \CC)$ the single layer potential $SL_\mu$ is the formal integral operator that acts as
\begin{align}\label{e-singlelayer_potential}
SL_\mu \f (x)=\int_\Sigma \frac{e^{i\sqrt{\mu}|x-y|}}{4\pi|x-y|}\f(y)d\sigma(y), \hspace{2em} x\in \mathbb{R}^3 \setminus \Sigma,
\end{align}
and the single layer boundary integral operator $\mathcal{S}_\mu$ is the mapping defined by
\begin{align}\label{e-singlelayer}
\cS_\mu \f (x)=\int_\Sigma \frac{e^{i\sqrt{\mu}|x-y|}}{4\pi|x-y|}\f(y)d\sigma(y), \hspace{2em} x\in\Sigma,
\end{align}
where $\sqrt{\mu}$ is again the complex square root satisfying $\textup{Im}\,\sqrt{\mu}>0$ for $\mu\in\CC\setminus[0,\infty)$. 
It is well-known that for any $s \in [-\frac{1}{2}, \frac{1}{2}]$ the map $\mathcal{S}_\mu$ gives rise to a bounded and bijective operator
\begin{equation} \label{mapping_properties_S_mu}
  \mathcal{S}_\mu: H^s(\Sigma; \mathbb{C}) \rightarrow H^{s+1}(\Sigma; \mathbb{C}).
\end{equation}
Moreover, we will use that for $\mu < 0$ the realization of $\mathcal{S}_\mu$ in $L^2(\Sigma; \mathbb{C})$ is self-adjoint and non-negative. These claims can be shown in the same way as in \cite[Lemma~2.6]{BEHL20}, where
the two-dimensional case and $\mu=-1$ is treated.
Furthermore, by \cite[Corollary~6.14]{M00} the mapping $SL_\mu$ gives for any $s \in (-\frac{1}{2}, 1]$ rise to a bounded operator
\begin{equation*} %\label{mapping_properties_SL_mu}
  SL_\mu: H^{s-1/2}(\Sigma; \mathbb{C}) \rightarrow H^{s+1}(\Omega_+; \mathbb{C}) \oplus H^{s+1}(\Omega_-; \mathbb{C}).
\end{equation*}
Moreover, the representations
\begin{equation*} %\label{single_layer_BIO_representation_trace1}
  SL_\mu = (-\Delta - \mu)^{-1} \gamma_D':H^{-1/2}(\Sigma; \mathbb{C})\rightarrow H^1(\mathbb{R}^3; \mathbb{C})
\end{equation*}
and
\begin{equation} \label{single_layer_BIO_representation_trace2}
  \mathcal{S}_\mu = \gamma_D (-\Delta - \mu)^{-1} \gamma_D':H^{-1/2}(\Sigma; \mathbb{C})\rightarrow H^{1/2}(\Sigma; \mathbb{C})
\end{equation}
hold, where 
$\gamma_D: H^1(\mathbb{R}^3; \mathbb{C}) \rightarrow H^{1/2}(\Sigma; \mathbb{C})$ is the bounded Dirichlet trace operator and $\gamma_D': H^{-1/2}(\Sigma; \mathbb{C}) \rightarrow H^{-1}(\mathbb{R}^3; \mathbb{C})$ its anti-dual map.
We will also use that the $L^2$-adjoint of $SL_\mu$ is given by
\begin{equation} \label{mapping_properties_SL_mu_star}
  SL_\mu^* = \gamma_D (-\Delta - \overline{\mu})^{-1}: L^2(\mathbb{R}^3; \mathbb{C}) \rightarrow H^{3/2}(\Sigma; \mathbb{C}),
\end{equation}
which is bounded, as the restriction $\gamma_D:H^2(\mathbb{R}^3; \mathbb{C})\rightarrow H^{3/2}(\Sigma; \mathbb{C})$ is bounded. Next, we state a useful continuity property of the map $\mu \mapsto \mathcal{S}_\mu$.

\begin{lem} \label{lemma_single_layer_continuous}
  Let $M \subset \mathbb{C} \setminus [0, \infty)$ be compact. Then, for all  $\mu_1, \mu_2 \in M$ the operator $\mathcal{S}_{\mu_1} - \mathcal{S}_{\mu_2}$ has a bounded extension from $H^{-3/2}(\Sigma; \mathbb{C})$ to $H^{3/2}(\Sigma; \mathbb{C})$ and there exists a constant $K(M)  > 0$ such that the estimate
  \begin{equation} \label{continuity_S_mu}
    \big\| \mathcal{S}_{\mu_1} - \mathcal{S}_{\mu_2} \big\|_{H^{-3/2}(\Sigma; \mathbb{C}) \rightarrow H^{3/2}(\Sigma; \mathbb{C})} \leq K(M) |\mu_1 - \mu_2|
  \end{equation}
  holds. In particular, for any $s \in [-\frac{3}{2}, \frac{3}{2}]$ the operator $\mathcal{S}_{\mu}: H^s(\Sigma; \mathbb{C}) \rightarrow H^s(\Sigma; \mathbb{C})$ is uniformly bounded in $\mu \in M$. 
\end{lem}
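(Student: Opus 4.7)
The strategy is to combine the operator representation $\mathcal{S}_\mu = \gamma_D(-\Delta - \mu)^{-1}\gamma_D'$ from~\eqref{single_layer_BIO_representation_trace2} with the second resolvent identity for the Dirichlet Laplacian, which immediately yields
\begin{equation*}
\mathcal{S}_{\mu_1} - \mathcal{S}_{\mu_2} = (\mu_1 - \mu_2)\,\gamma_D\,(-\Delta - \mu_1)^{-1}(-\Delta - \mu_2)^{-1}\,\gamma_D'.
\end{equation*}
The scalar prefactor already supplies the linear dependence $|\mu_1 - \mu_2|$ required in~\eqref{continuity_S_mu}, so the task reduces to bounding the right-hand side operator between $H^{-3/2}(\Sigma;\mathbb{C})$ and $H^{3/2}(\Sigma;\mathbb{C})$, uniformly in $\mu_1, \mu_2 \in M$.

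To establish this, I would factor the product through the chain
\begin{equation*}
H^{-3/2}(\Sigma) \xrightarrow{\gamma_D'} H^{-2}(\mathbb{R}^3) \xrightarrow{(-\Delta-\mu_2)^{-1}} L^2(\mathbb{R}^3) \xrightarrow{(-\Delta-\mu_1)^{-1}} H^2(\mathbb{R}^3) \xrightarrow{\gamma_D} H^{3/2}(\Sigma).
\end{equation*}
The outer maps are standard on a $C^2$-surface: $\gamma_D: H^2(\mathbb{R}^3) \to H^{3/2}(\Sigma)$ by the trace theorem, and $\gamma_D': H^{-3/2}(\Sigma) \to H^{-2}(\mathbb{R}^3)$ by anti-duality. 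The uniform boundedness of the middle factors $(-\Delta - \mu)^{-1}: H^s \to H^{s+2}$ on compact subsets of $\rho(-\Delta)$ follows from the spectral theorem together with the identity $-\Delta(-\Delta-\mu)^{-1} = I + \mu(-\Delta-\mu)^{-1}$, which promotes the obvious $L^2$-boundedness to $L^2 \to H^2$ (and by duality $H^{-2} \to L^2$), with constants controlled by $\dist(M, [0,\infty))$ and $\sup_{\mu\in M}|\mu|$. Composing the individual norm bounds gives~\eqref{continuity_S_mu}.

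For the concluding uniform-boundedness statement, I would fix a reference $\mu_0 \in M$ and split $\mathcal{S}_\mu = \mathcal{S}_{\mu_0} + (\mathcal{S}_\mu - \mathcal{S}_{\mu_0})$. The difference term is uniformly bounded $H^s \to H^s$ on $M$ for every $s \in [-3/2, 3/2]$ by combining~\eqref{continuity_S_mu} with the continuous embeddings $H^s(\Sigma) \hookrightarrow H^{-3/2}(\Sigma)$ and $H^{3/2}(\Sigma) \hookrightarrow H^s(\Sigma)$ on the compact surface $\Sigma$. The fixed operator $\mathcal{S}_{\mu_0}: H^s \to H^s$ is in turn bounded on this range by piecing together~\eqref{mapping_properties_S_mu} with Sobolev embeddings ($s \in [-1/2, 1/2]$ directly, $s \in [1/2, 3/2]$ by precomposing with $H^s \hookrightarrow H^{s-1}$) together with $L^2$-duality via $\mathcal{S}_{\mu_0}^* = \mathcal{S}_{\overline{\mu_0}}$ to cover $s \in [-3/2, -1/2]$. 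The only delicate point is the bookkeeping of Sobolev indices, so that the four orders of smoothing from the two Laplacian resolvents, minus the half-order loss from the trace and its anti-dual, add up to exactly the three-index smoothing between $H^{-3/2}$ and $H^{3/2}$; beyond this, the argument is elementary and no pseudodifferential calculus is needed.
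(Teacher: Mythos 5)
Your proof is correct and follows the same main line as the paper: both start from the representation $\mathcal{S}_\mu = \gamma_D(-\Delta-\mu)^{-1}\gamma_D'$ in \eqref{single_layer_BIO_representation_trace2}, apply the second resolvent identity, and reduce \eqref{continuity_S_mu} to a uniform $H^{-2}(\mathbb{R}^3)\to H^2(\mathbb{R}^3)$ bound on the resolvent product. The two implementations differ only in technique: the paper carries out a single explicit Fourier-side computation for $(-\Delta-\mu_1)^{-1}(-\Delta-\mu_2)^{-1}$, exhibiting the constant as a supremum of an explicit rational function, whereas you factor through the chain $H^{-2}\to L^2\to H^2$ and obtain uniform bounds on each resolvent via the operator identity $-\Delta(-\Delta-\mu)^{-1} = I + \mu(-\Delta-\mu)^{-1}$. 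Both are valid; the paper's computation is marginally more self-contained while yours is more modular.

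Where you genuinely improve on the paper is the concluding \emph{in particular} step. The paper asserts that \eqref{continuity_S_mu} yields uniform boundedness of $\mathcal{S}_\mu:H^{-3/2}(\Sigma)\to H^{3/2}(\Sigma)$, but \eqref{continuity_S_mu} concerns only the \emph{difference} $\mathcal{S}_{\mu_1}-\mathcal{S}_{\mu_2}$, and on a $C^2$-surface $\mathcal{S}_\mu$ alone is not a three-order smoothing operator — \eqref{mapping_properties_S_mu} only gives one order of gain, on $s\in[-1/2,1/2]$. Your splitting $\mathcal{S}_\mu = \mathcal{S}_{\mu_0} + (\mathcal{S}_\mu - \mathcal{S}_{\mu_0})$, followed by the careful piecing-together of $\mathcal{S}_{\mu_0}:H^s\to H^s$ for $s\in[-3/2,3/2]$ via \eqref{mapping_properties_S_mu}, the embeddings, and the $L^2$-duality $\mathcal{S}_{\mu_0}^*=\mathcal{S}_{\overline{\mu_0}}$, is exactly what is needed to make the argument airtight, and it is a detail the paper leaves implicit. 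Your Sobolev-index bookkeeping checks out throughout.
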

\begin{proof}
  It suffices to show that $(-\Delta - \mu_1)^{-1} - (-\Delta - \mu_2)^{-1} = (\mu_1 - \mu_2) (-\Delta - \mu_1)^{-1} (-\Delta - \mu_2)^{-1}$ gives rise to a bounded operator from $H^{-2}(\mathbb{R}^2; \mathbb{C})$ to $H^2(\mathbb{R}^2; \mathbb{C})$ that satisfies
  \begin{equation} \label{resolvent_Laplace_continuous}
    \big\| (-\Delta - \mu_1)^{-1} - (-\Delta - \mu_2)^{-1} \big\|_{H^{-2}(\mathbb{R}^3; \mathbb{C}) \rightarrow H^{2}(\mathbb{R}^3; \mathbb{C})} \leq K(M) |\mu_1 - \mu_2|,
  \end{equation}
  as then \eqref{continuity_S_mu} follows from~\eqref{single_layer_BIO_representation_trace2} and  the fact that $\gamma_D$ has a continuous restriction
  $\gamma_D: H^2(\mathbb{R}^3; \mathbb{C}) \rightarrow H^{3/2}(\Sigma; \mathbb{C})$ and $\gamma_D'$ a continuous extension $\gamma_D': H^{-3/2}(\Sigma; \mathbb{C}) \rightarrow H^{-2}(\mathbb{R}^3; \mathbb{C})$. To show~\eqref{resolvent_Laplace_continuous}, we compute for $f \in H^{-2}(\mathbb{R}^3; \mathbb{C})$, taking~\eqref{norm_Sobolev_space} into account,
  \begin{equation*}
    \begin{split}
      \big\| \big((-\Delta - \mu_1)^{-1} &- (-\Delta - \mu_2)^{-1}\big) f \big\|_{H^{2}(\mathbb{R}^3; \mathbb{C})}^2 \\
      & = \int_{\RR^3}(1+|x|^2)^2 \big|\cF \big((-\Delta - \mu_1)^{-1} - (-\Delta - \mu_2)^{-1}\big) f(x)\big|^2 dx \\
      & = \int_{\RR^3} (1+|x|^2)^2 \left| \frac{1}{|x|^2 - \mu_1} - \frac{1}{|x|^2 - \mu_2} \right|^2 |\cF f(x)|^2 dx \\
      & = \int_{\RR^3}  \frac{(1+|x|^2)^4 |\mu_1 - \mu_2|^2}{|(|x|^2 - \mu_1)(|x|^2 - \mu_2)|^2} \frac{|\cF f(x)|^2}{(1+|x|^2)^2} dx \\
      &\leq \sup_{x \in \mathbb{R}^3} \frac{(1+|x|^2)^4 |\mu_1 - \mu_2|^2}{|(|x|^2 - \mu_1)(|x|^2 - \mu_2)|^2} \cdot \| f \|_{H^{-2}(\mathbb{R}^3; \mathbb{C})}^2.
    \end{split}
  \end{equation*}
  This shows~\eqref{resolvent_Laplace_continuous} with $K(M) := \sup_{x \in \mathbb{R}^3, \mu_1, \mu_2 \in M} \frac{(1+|x|^2)^2}{|(|x|^2 - \mu_1)(|x|^2 - \mu_2)|}$.
  
  Eventually, it follows from~\eqref{continuity_S_mu} that $\mathcal{S}_{\mu}: H^{-3/2}(\Sigma; \mathbb{C}) \rightarrow H^{3/2}(\Sigma; \mathbb{C})$ is uniformly bounded in $\mu \in M$. 
  Since $H^{s_1}(\Sigma; \mathbb{C})$ is continuously embedded in $H^{s_2}(\Sigma; \mathbb{C})$ for $s_1 > s_2$, we conclude that 
  $\mathcal{S}_{\mu}: H^s(\Sigma; \mathbb{C}) \rightarrow H^s(\Sigma; \mathbb{C})$ is also uniformly bounded in $\mu \in M$ for any $s \in [-\frac{3}{2}, \frac{3}{2}]$. 
\end{proof}

Let again $\Omega$ be a $C^2$-domain as in  Hypothesis~\ref{hypothesis_Omega}.
In the next lemma we express the resolvent of the self-adjoint Dirichlet Laplacian 
\begin{equation} \label{def_Dirichlet_op}
  -\Delta_D^\Omega f = -\Delta f, \qquad \dom (-\Delta_D^\Omega) = \big\{ f \in H^2(\Omega; \mathbb{C}): \gamma_D f =  0 \big\},
\end{equation}
in $L^2(\Omega; \mathbb{C})$
as the compression of the resolvent of the self-adjoint Laplacian $-\Delta$ in $L^2(\mathbb{R}^3; \mathbb{C})$
and a perturbation term. 
The statement follows from, e.g., \cite[Theorem~4.4]{AB09}, \cite[Theorem~3.2]{B21} or \cite[Theorem~8.6.3]{BHS20}, where instead of the 
single layer potential \eqref{e-singlelayer_potential}
and the single layer boundary integral operator \eqref{e-singlelayer} the terminology of $\gamma$-fields, Weyl functions or $Q$-functions, and Dirichlet-to-Neumann maps is used.

\begin{lem} \label{lemma_Dirichlet_resolvent}
Let $\Omega$ and $\Omega_\pm$ be as in Hypothesis~\ref{hypothesis_Omega} and $-\Delta_D^\Omega$ and $-\Delta_D^{\Omega_\pm}$ be the corresponding Dirichlet Laplacians defined as in~\eqref{def_Dirichlet_op}. Then, for $-\Delta_D := (-\Delta_D^{\Omega_+}) \oplus (-\Delta_D^{\Omega_-})$ and any $z\in \rho(-\Delta_D)= \mathbb{C} \setminus  [0, \infty)$ the resolvent formula
  \begin{equation*}
    (-\Delta_D - z)^{-1}  = (-\Delta - z)^{-1} - SL_z \mathcal{S}_z^{-1} SL_{\overline{z}}^* 
  \end{equation*}
  holds. In particular, one has
  \begin{equation*}
    (-\Delta_D^\Omega - z)^{-1} = P_\Omega (-\Delta - z)^{-1} P_\Omega^* - P_\Omega SL_z \mathcal{S}_z^{-1} SL_{\overline{z}}^* P_\Omega^*
  \end{equation*}
  with the projection and embedding operators $P_\Omega$ and $P_\Omega^*$ from Corollary~\ref{corollary_H_tau_properties}.
\end{lem}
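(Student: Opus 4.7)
The plan is to construct a candidate for $(-\Delta_D - z)^{-1} f$ directly and verify it lies in $\dom(-\Delta_D)$ and is sent to $f$ by $-\Delta_D - z$. For fixed $f \in L^2(\mathbb{R}^3; \mathbb{C})$ I would set
\[
  u := (-\Delta - z)^{-1} f - SL_z \mathcal{S}_z^{-1} SL_{\overline{z}}^* f,
\]
and check the two defining properties of the Dirichlet Laplacian on $\Omega_+$ and $\Omega_-$ separately.

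The first step is to make sense of the single layer density. By \eqref{mapping_properties_SL_mu_star} we have $SL_{\overline{z}}^* f = \gamma_D (-\Delta - z)^{-1} f \in H^{3/2}(\Sigma; \mathbb{C})$, and by \eqref{mapping_properties_S_mu} with $s = \tfrac{1}{2}$ the single layer boundary integral operator $\mathcal{S}_z: H^{1/2}(\Sigma; \mathbb{C}) \to H^{3/2}(\Sigma; \mathbb{C})$ is bijective, so that $\varphi := \mathcal{S}_z^{-1} SL_{\overline{z}}^* f \in H^{1/2}(\Sigma; \mathbb{C})$ is well-defined. The mapping property of $SL_z$ with $s = 1$ stated just above \eqref{mapping_properties_SL_mu_star} then yields $SL_z \varphi \in H^2(\Omega_+; \mathbb{C}) \oplus H^2(\Omega_-; \mathbb{C})$, and the standard fact that the single layer potential has a two-sided trace on $\Sigma$ equal to $\mathcal{S}_z \varphi$ gives $\gamma_D SL_z \varphi = \mathcal{S}_z \varphi = SL_{\overline{z}}^* f$ from either side.

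Combining these, $u \in H^2(\Omega_+; \mathbb{C}) \oplus H^2(\Omega_-; \mathbb{C})$. On $\Omega_\pm$ the single layer potential is annihilated by $-\Delta - z$, so $(-\Delta - z) u = f$ there. On the common boundary $\Sigma$ one obtains $\gamma_D u = \gamma_D (-\Delta - z)^{-1} f - \mathcal{S}_z \varphi = 0$. Hence $u \in \dom(-\Delta_D^{\Omega_+}) \oplus \dom(-\Delta_D^{\Omega_-}) = \dom(-\Delta_D)$ and $(-\Delta_D - z) u = f$, which is the first resolvent identity. The second follows by composing with $P_\Omega$ on the left and $P_\Omega^*$ on the right and using that $(-\Delta_D - z)^{-1}$ is block-diagonal with respect to $L^2(\mathbb{R}^3; \mathbb{C}) = L^2(\Omega_+; \mathbb{C}) \oplus L^2(\Omega_-; \mathbb{C})$, so that its compression to $L^2(\Omega; \mathbb{C})$ equals $(-\Delta_D^{\Omega} - z)^{-1}$.

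I do not anticipate a serious obstacle: the argument reduces to tracking in which Sobolev space each boundary density lives and invoking the bijectivity of $\mathcal{S}_z$ together with the continuity of the single layer potential across $\Sigma$. In contrast to the Dirac analogue in Proposition~\ref{proposition_delta_op}, no strongly singular boundary operator or Birman--Schwinger-type argument is needed here.
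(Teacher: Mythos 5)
Your argument is correct, and it takes a genuinely different route from the paper. The paper does not prove this lemma directly; it simply defers to the abstract machinery of boundary triples and Krein-type resolvent formulas, citing \cite[Theorem~4.4]{AB09}, \cite[Theorem~3.2]{B21}, and \cite[Theorem~8.6.3]{BHS20}, where the result is phrased in terms of $\gamma$-fields, Weyl functions, and Dirichlet-to-Neumann maps. Your proof instead verifies the formula by hand: construct the candidate $u$, track densities through the Sobolev scale via \eqref{mapping_properties_S_mu}, \eqref{mapping_properties_SL_mu_star}, and the regularity of $SL_z$, and check that $u$ lies in $\dom(-\Delta_D)$ with $(-\Delta_D - z)u = f$. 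One small point worth making explicit: the continuity of $SL_z\varphi$ across $\Sigma$ is immediate from the paper's own representation $SL_z = (-\Delta - z)^{-1}\gamma_D' : H^{-1/2}(\Sigma;\mathbb C) \to H^1(\mathbb R^3;\mathbb C)$ combined with \eqref{single_layer_BIO_representation_trace2}, so you need not invoke it as an external standard fact. Also, since $z \in \rho(-\Delta_D)$ is assumed, showing $(-\Delta_D - z)u = f$ with $u \in \dom(-\Delta_D)$ indeed forces $u = (-\Delta_D - z)^{-1}f$ by injectivity of $-\Delta_D - z$; you implicitly use this and it is fine. The trade-off between the two approaches: the paper's citation is shorter and leverages a framework it already introduces elsewhere, while your direct verification is self-contained, more elementary, and makes the mechanism of the Krein formula transparent without requiring familiarity with boundary triple theory.
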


%%%%%%%%%%%%%%%%%%%%%%%%%%%%%
%%%%%%%%%%%%%%%%%%%%%%%%%%%%%
\section{The Nonrelativistic Limit}\label{s-nonrel}
%%%%%%%%%%%%%%%%%%%%%%%%%%%%%
%%%%%%%%%%%%%%%%%%%%%%%%%%%%%

In this section we compute the nonrelativistic limit of the operator $A_\kappa^\Sigma$ defined in~\eqref{def_delta_op} and use this to show Theorem~\ref{theorem_nr_limit} and Corollary~\ref{cor_Faber_Krahn}. Again, we will always assume that $\Omega_\pm$ is as in Hypothesis~\ref{hypothesis_Omega}  and $\Sigma = \partial \Omega_\pm$. Furthermore, we will often assume that $z \in \mathbb{C} \setminus [0, \infty)$ and $c>\sqrt{|z|}$, as then $z + \frac{c^2}{2} \in \rho(A_0)=\rho(A_\kappa^\Sigma)$; cf. 
Proposition~\ref{proposition_delta_op}.
In the following, the Krein type resolvent formula
\begin{equation} \label{krein_rescaled}
  \left(A_\kappa^\Sigma - \left(z + \frac{c^2}{2}\right) \right)^{-1}\! = \!\left(A_0 - \left(z + \frac{c^2}{2}\right) \right)^{-1} \! - \Phi_{z + c^2/2} \mathcal{M}_c \big( \vartheta_c + \mathcal{M}_c \mathcal{C}_{z + c^2/2} \mathcal{M}_c \big)^{-1} \mathcal{M}_c \Phi_{\overline{z} + c^2/2}^*
\end{equation}
from Proposition~\ref{proposition_delta_op} will play an important role.
We will compute the limit of each of the terms on the right hand side separately. The convergence of $(A_0 - (z + c^2/2))^{-1}$, $\Phi_{z+c^2/2} \mathcal{M}_c$, and $\mathcal{M}_c \Phi_{\overline{z} + c^2/2}^*$ is investigated in Section~\ref{section_convergence_A_0_Phi}, the convergence of  $( \vartheta_c + \mathcal{M}_c \mathcal{C}_{z + c^2/2} \mathcal{M}_c )^{-1}$ is treated in Section~\ref{section_convergence_C_inv}, and the nonrelativistic limit 
of $A_\kappa^\Sigma$ is computed in Section~\ref{abc}.

\subsection{Convergence of $(A_0 - (z + c^2/2))^{-1}$, $\Phi_{z+c^2/2} \mathcal{M}_c$, and $\mathcal{M}_c \Phi_{\overline{z} + c^2/2}^*$} \label{section_convergence_A_0_Phi}

First, the nonrelativistic limit of the free Dirac operator $A_0$ defined in~\eqref{e-freedirac} is discussed. This result is well-known,  it follows, e.g., as a special case of the results in \cite[Section~6]{T92}. However, since the result and the topology, in which the convergence takes place, are of importance in the analysis of $\Phi_{z + c^2/2} \mathcal{M}_c$ and $\mathcal{M}_c \Phi_{\overline{z} + c^2/2}^*$, we give a direct simple proof here to keep the presentation self-contained. 

\begin{prop}\label{p-freeconvergence}
Let $z \in \mathbb{C} \setminus [0, \infty)$ and $c>\sqrt{|z|}$. Then, there exists a constant $K(z) $ such that
\begin{align*}
   \bigg\| \left(A_0-\left(z+\frac{c^2}{2}\right)\right)^{-1}-(-\Delta-z)^{-1} \begin{pmatrix} I_2 & 0 \\ 0 & 0 \end{pmatrix} \bigg\|_{L^2(\RR^3; \mathbb{C}^4)\to H^1(\RR^3; \mathbb{C}^4)} \leq \frac{K(z)}{c}.
\end{align*}
\end{prop}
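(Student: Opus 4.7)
The plan is to work entirely on the Fourier side. Both operators are Fourier multipliers with matrix-valued symbols, so by Plancherel and the definition~\eqref{norm_Sobolev_space} of the $H^1$-norm, the operator norm to be estimated is dominated by
\[
\sup_{\xi\in\RR^3}(1+|\xi|^2)^{1/2}\bigl\|m_c(\xi) - m_0(\xi)\bigr\|_{\CC^4\to\CC^4},
\]
where $m_c(\xi):=\bigl(c(\al\cdot\xi)+\tfrac{c^2}{2}\beta - z - \tfrac{c^2}{2}\bigr)^{-1}$ and $m_0(\xi):=(|\xi|^2-z)^{-1} P_+$ with $P_+:=\bigl(\begin{smallmatrix}I_2 & 0 \\ 0 & 0\end{smallmatrix}\bigr)$. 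Thus the proposition reduces to a pointwise estimate on matrix symbols.

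First, I would invert the Dirac symbol explicitly. Using $(\al\cdot\xi)^2=|\xi|^2 I_4$, $\beta^2=I_4$ and the anticommutation relations~\eqref{anti_commutation}, squaring gives $\bigl(c(\al\cdot\xi)+\tfrac{c^2}{2}\beta\bigr)^2 = (c^2|\xi|^2 + \tfrac{c^4}{4})I_4$, so that
\[
m_c(\xi) = \frac{c(\al\cdot\xi) + \tfrac{c^2}{2}\beta + z + \tfrac{c^2}{2}}{c^2(|\xi|^2 - z) - z^2}.
\]
Since $\tfrac{c^2}{2}(\beta + I_4) = c^2 P_+$, the numerator equals $c^2 P_+ + z I_4 + c(\al\cdot\xi)$, which gives the decomposition
\[
m_c(\xi) - m_0(\xi) = \left(\frac{c^2}{c^2(|\xi|^2-z) - z^2} - \frac{1}{|\xi|^2-z}\right) P_+ + \frac{z I_4 + c(\al\cdot\xi)}{c^2(|\xi|^2-z) - z^2}.
\]
The scalar coefficient in front of $P_+$ collapses to $z^2/\bigl[c^2(|\xi|^2-z-z^2/c^2)(|\xi|^2-z)\bigr]$, so the first summand is formally $O(c^{-2})$, while the second is $O(c^{-1})$ because of the $c(\al\cdot\xi)$ in the numerator.

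The only genuine analytic input needed is a uniform lower bound
\[
\bigl|c^2(|\xi|^2 - z) - z^2\bigr| \geq \kappa(z)\, c^2(1+|\xi|^2), \qquad \xi\in\RR^3,
\]
valid for all $c$ sufficiently large (depending on $z$). This follows from $z\in\CC\setminus[0,\infty)$: provided $c^2>|z|$ the shifted point $z+z^2/c^2$ stays in $\CC\setminus[0,\infty)$, and direct case analysis (real negative $z$ versus $\IM z\neq 0$) shows that its distance to $[0,\infty)$ is bounded below by a constant $\kappa(z)>0$. This yields $||\xi|^2 - z - z^2/c^2|\geq \kappa(z)(1+|\xi|^2)$ after comparing to the behaviour at $|\xi|\to 0$ and $|\xi|\to\infty$. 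For the finitely many ``bad'' values $\sqrt{|z|}<c<c_0(z)$ left uncovered, the bound $K(z)/c$ is trivial from the separate operator norms of both resolvents, so these can be absorbed into the constant $K(z)$.

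With this lower bound in hand, multiplying the symbol difference by $\sqrt{1+|\xi|^2}$ and using $\|\al\cdot\xi\|\leq C|\xi|$ and $\|P_+\|=1$ yields a bound of order $c^{-2}(1+|\xi|^2)^{-3/2}$ for the first summand and $c^{-1}(|z|+c|\xi|)/(c(1+|\xi|^2)^{1/2})$ for the second; taking the supremum over $\xi$ and using $|\xi|/(1+|\xi|^2)^{1/2}\leq 1$ produces the claimed bound $K(z)/c$. The main (and only modest) obstacle in executing this plan is the uniform denominator estimate above; apart from that, it is a direct Fourier multiplier computation. The $1/c$ rate, rather than $1/c^2$, is forced by the off-diagonal Dirac term $c(\al\cdot\xi)$ in the numerator and is sharp in the $H^1$ topology.
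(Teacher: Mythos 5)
Your plan is exactly the paper's: both work on the Fourier side, invert the Dirac symbol via $(\alpha\cdot\xi)^2=|\xi|^2I_4$ and the anticommutation relations, use $\tfrac12(\beta+I_4)=P_+$ to peel off the Schr\"odinger symbol, and arrive at the same decomposition of $m_c-m_0$ into a term of order $c^{-2}$ on $\ran P_+$ and the off-diagonal Dirac term $c(\alpha\cdot\xi)$ plus $zI_4$, which sets the $c^{-1}$ rate. The only point to flag is your handling of the ``bad'' $c$-range: the claimed uniform lower bound $\bigl|c^2(|\xi|^2-z)-z^2\bigr|\geq\kappa(z)c^2(1+|\xi|^2)$ does \emph{not} hold uniformly for all $c>\sqrt{|z|}$ when $z<0$, because then the shifted point $w_c:=z+z^2/c^2\to0$ as $c\to\sqrt{|z|}^{+}$, and at $\xi=0$ the symbol difference has norm $1/(c^2+z)$, which blows up relative to $1/c$. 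The uncovered range is an interval, not finitely many values, and on it the separate resolvent norms are \emph{not} uniformly bounded, so the absorption argument as written fails. This is not a fatal flaw: the statement is only used downstream (Propositions~\ref{proposition_confinement_nr_limit} and Theorem~\ref{theorem_nr_limit}) in the regime ``$c$ sufficiently large,'' and in fact the paper's own proof is equally cavalier on exactly this point (``no singularity in the last $x$-dependent expression'' is a pointwise-in-$c$ statement, not a uniform one). The clean fix is to state and prove the bound for $c\geq c_0(z)$ with $c_0(z)>\sqrt{|z|}$, which is all that is ever needed; your explicit discussion of the lower bound $\kappa(z)$ is otherwise a useful elaboration of what the paper leaves implicit.
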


\begin{proof}
We shall use \eqref{norm_Sobolev_space} and compute for $f \in L^2(\RR^3; \mathbb{C}^4)$ and $c>\sqrt{|z|}$ 
\begin{align*}
&\int_{\RR^3}(1+|x|^2)\Bigg|\cF\left[\left(A_0-\left(z+\frac{c^2}{2}\right)\right)^{-1}-(-\Delta-z)^{-1}\begin{pmatrix} I_2 & 0 \\ 0 & 0 \end{pmatrix} \right] f (x)\Bigg|^2dx  \\
& \qquad =\int_{\RR^3}(1+|x|^2)\Bigg|\left[\frac{\al\cdot x+\frac{c}{2}\beta+\left(\frac{z}{c}+\frac{c}{2}\right)I_4}{c(|x|^2-(\frac{z^2}{c^2}+z))}-\frac{1}{|x|^2-z}\begin{pmatrix} I_2 & 0 \\ 0 & 0 \end{pmatrix} \right]\mathcal{F} f (x)\Bigg|^2 d x.
\end{align*}
Next, we decompose the part of the integrand that does not depend on $\mathcal{F} f$ in the last line of the equation. Using 
$\frac{1}{2}(\beta+I_4)=\big( \begin{smallmatrix} I_2 & 0 \\ 0 & 0 \end{smallmatrix} \big)$ we find
\begin{align*}
    \sup_{x\in\RR^3}&(1+|x|^2)\Bigg|\frac{\al\cdot x+\frac{z}{c}I_4}{c \big(|x|^2-\frac{z^2}{c^2}-z\big)}+\frac{\frac{1}{2}(\beta+I_4)}{|x|^2-\frac{z^2}{c^2}-z}-\frac{1}{|x|^2-z} \begin{pmatrix} I_2 & 0 \\ 0 & 0 \end{pmatrix}\Bigg|^2 \\
    &= \sup_{x\in\RR^3}\frac{1+|x|^2}{c^2}\Bigg|\frac{\al\cdot x+\frac{z}{c}I_4}{|x|^2-\frac{z^2}{c^2}-z}+\frac{z^2}{c (|x|^2-\frac{z^2}{c^2}-z) (|x|^2-z)} \begin{pmatrix} I_2 & 0 \\ 0 & 0 \end{pmatrix}\Bigg|^2\leq \frac{K(z)^2}{c^2}
\end{align*}
for some constant $K(z)$ that does not depend on $c$, since the assumptions $z \in \mathbb{C} \setminus [0, \infty)$ and $c>\sqrt{|z|}$ ensure that there is no singularity in the last $x$-dependent expression. As $\| \mathcal{F} f\|_{L^2(\RR^3; \mathbb{C}^4)} = \| f\|_{L^2(\RR^3; \mathbb{C}^4)}$ we conclude
\begin{align*}
\int_{\RR^3}\!(1+|x|^2)\Bigg|\cF \! \left[\left(A_0-\left(z+\frac{c^2}{2}\right)\right)^{-1} \! \! -(-\Delta-z)^{-1} \begin{pmatrix} I_2 & 0 \\ 0 & 0 \end{pmatrix} \right] \! f (x)\Bigg|^2 \! dx \leq \frac{K(z)^2}{c^2} \| f \|_{L^2(\mathbb{R}^3; \mathbb{C}^4)}^2,
\end{align*}
which  shows the desired result.
\end{proof}

By using the convergence result from Proposition~\ref{p-freeconvergence} and the definition~\eqref{def_Phi_z_star} of $\Phi_z^*$, it is not difficult to obtain the convergence of $\Phi_{z+c^2/2}$ and $\Phi^*_{z+c^2/2}$. Recall that $SL_\mu$, $\mu \in \mathbb{C} \setminus [0, \infty)$, is the single layer potential defined in~\eqref{e-singlelayer_potential} and that $\mathcal{M}_c$ is the scaling matrix given by~\eqref{def_M_c}. Since there is a multiplication by $\sqrt{c}$ involved, the rate of convergence in the following proposition reduces to $\mathcal{O}(c^{-1/2})$. This is the main reason why we get this rate of convergence in Theorem~\ref{theorem_nr_limit}.

\begin{prop}\label{p-allconvergence}
    Let $z \in \mathbb{C} \setminus [0, \infty)$ and $c>\sqrt{|z|}$. Then, there exists a constant $K(z)$ such that 
\begin{align}\label{e-phidifference}
    \left\|\Phi_{z+c^2/2} \mathcal{M}_c -SL_z \begin{pmatrix} I_2 & 0 \\ 0 & 0 \end{pmatrix} \right\|_{H^{-1/2}(\Sigma; \mathbb{C}^4)\to L^2(\RR^3; \mathbb{C}^4)}\leq \frac{K(z)}{\sqrt{c}}
\end{align}    
and
\begin{align}\label{e-phistardifference}
    \left\| \mathcal{M}_c \Phi^*_{z+c^2/2}- SL_z^*\begin{pmatrix} I_2 & 0 \\ 0 & 0 \end{pmatrix} \right\|_{L^2(\RR^3; \mathbb{C}^4)\to H^{1/2}(\Sigma; \mathbb{C}^4)} \leq \frac{K(z)}{\sqrt{c}}.
\end{align}
In particular, the operators $\Phi_{z+c^2/2} \mathcal{M}_c:H^{-1/2}(\Sigma; \mathbb{C}^4)\to L^2(\RR^3; \mathbb{C}^4)$ and the mappings
$\mathcal{M}_c \Phi^*_{z+c^2/2}:L^2(\RR^3; \mathbb{C}^4)\to H^{1/2}(\Sigma; \mathbb{C}^4)$ are uniformly bounded in $c$.
\end{prop}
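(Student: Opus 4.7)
The plan is to prove \eqref{e-phistardifference} directly from Proposition~\ref{p-freeconvergence} and then obtain \eqref{e-phidifference} by passing to Hilbert adjoints; the uniform boundedness claim then falls out for free. Write $P:=\begin{pmatrix} I_2 & 0 \\ 0 & 0 \end{pmatrix}$. Proposition~\ref{p-freeconvergence}, applied with $\bar z$ in place of $z$, furnishes a remainder $R_c\colon L^2(\mathbb R^3;\mathbb C^4)\to H^1(\mathbb R^3;\mathbb C^4)$ with $\|R_c\|_{L^2\to H^1}\le K(z)/c$ such that
\[
(A_0-(\bar z+c^2/2))^{-1}=(-\Delta-\bar z)^{-1}P+R_c.
\]
Composing with the Dirichlet trace and invoking $SL_z^{*}=\gamma_D(-\Delta-\bar z)^{-1}$ (which acts componentwise on $\mathbb C^4$-valued functions) gives $\Phi_{z+c^2/2}^{*}-SL_z^{*}P=\gamma_D R_c$, a difference of order $O(1/c)$ in $L^2\to H^{1/2}$.

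Next I multiply this identity on the left by $\mathcal{M}_c$. The crucial algebraic cancellation is $\mathcal{M}_c P=P$, which is immediate from \eqref{def_M_c}; since the scalar operator $SL_z^{*}$ commutes with every constant $4\times 4$ matrix, this yields $\mathcal{M}_c SL_z^{*}P=SL_z^{*}\mathcal{M}_c P=SL_z^{*}P$, whence
\[
\mathcal{M}_c\Phi_{z+c^2/2}^{*}-SL_z^{*}P=\mathcal{M}_c\gamma_D R_c.
\]
Since $\mathcal{M}_c$ acts on $H^{1/2}(\Sigma;\mathbb C^4)$ by multiplication by a constant matrix of norm at most $\sqrt c$, combining with the boundedness of $\gamma_D\colon H^1\to H^{1/2}$ and the bound on $R_c$ produces the announced $O(1/\sqrt c)$ estimate in \eqref{e-phistardifference}. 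The complementary bound \eqref{e-phidifference} follows by taking Hilbert adjoints: the pointwise symmetry $\overline{G_z(x)}^{T}=G_{\bar z}(-x)$, a direct consequence of \eqref{e-greens} and the choice of square root with positive imaginary part, shows that $\Phi_z$ from \eqref{def_Phi_z} agrees with the $L^2$-Hilbert adjoint of $\Phi_z^{*}$ under the standard identification $H^{-1/2}=(H^{1/2})'$, and similarly $SL_z=(SL_z^{*})^{*}$. Using $\mathcal{M}_c^{*}=\mathcal{M}_c$ together with the commutation of $P$ and $SL_z$ (the latter being scalar-valued), the adjoint of the operator bounded in the previous step is exactly $\Phi_{z+c^2/2}\mathcal{M}_c-SL_z P$, and since adjunction is an isometry the bound \eqref{e-phidifference} follows at the same rate. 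Uniform boundedness of both operators in $c$ is then immediate: the limit operators $SL_z^{*}P$ and $SL_z P$ are $c$-independent and bounded by the single-layer mapping properties recalled in Section~\ref{section_Dirichlet_Laplacian}.

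The one step requiring any care is the adjoint identification, where one must verify that the anti-dual defining $\Phi_z$ in \eqref{def_Phi_z} agrees with the $L^2$-Hilbert adjoint of $\Phi_z^{*}$ under the natural duality pairing; this reduces to the pointwise kernel identity for $G_z$ displayed above. Everything else is a purely algebraic cancellation of the form $\mathcal{M}_c P=P$ combined with the crude matrix norm $\|\mathcal{M}_c\|=\sqrt c$, which exactly balances the $1/c$ rate inherited from Proposition~\ref{p-freeconvergence} to yield the announced $1/\sqrt c$ rate.
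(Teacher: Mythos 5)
Your proof is correct and takes essentially the same approach as the paper: trace the free-resolvent convergence from Proposition~\ref{p-freeconvergence}, exploit the cancellation $\mathcal{M}_c \begin{pmatrix} I_2 & 0 \\ 0 & 0 \end{pmatrix} = \begin{pmatrix} I_2 & 0 \\ 0 & 0 \end{pmatrix}$ together with $\|\mathcal{M}_c\| = \sqrt{c}$ to obtain the $c^{-1/2}$ rate for \eqref{e-phistardifference}, and then pass to anti-duals for \eqref{e-phidifference}. You have simply made explicit the ``matrix terms'' bookkeeping and the duality identification that the paper compresses into a single sentence each.
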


\begin{proof}
  Recall from~\eqref{def_Phi_z_star} and~\eqref{mapping_properties_SL_mu_star} that
    \begin{align*}
    \Phi^*_{z+c^2/2}=\gamma_D\left(A_0-\left(\overline{z}+\frac{c^2}{2}\right)\right)^{-1}\quad \text{and}\quad
    SL_z^*=\gamma_D(-\Delta-\overline{z})^{-1}.
    \end{align*}
    Hence, \eqref{e-phistardifference} follows from Proposition~\ref{p-freeconvergence} and the mapping properties of the trace operator; the stated rates of convergence are obtained by accounting for the matrix terms in equation~\eqref{e-phistardifference}. The claim in~\eqref{e-phidifference} follows from~\eqref{e-phistardifference} by duality. The uniform boundedness of 
    $\Phi_{z+c^2/2} \mathcal{M}_c$ and $\mathcal{M}_c \Phi^*_{z+c^2/2}$ is clear as these operators converge.
\end{proof}

\subsection{Convergence of $( \vartheta_c + \mathcal{M}_c \mathcal{C}_{z + c^2/2} \mathcal{M}_c )^{-1}$} \label{section_convergence_C_inv}

The more difficult part in the analysis of~\eqref{krein_rescaled} is $( \vartheta_c + \mathcal{M}_c \mathcal{C}_{z + c^2/2} \mathcal{M}_c )^{-1}$. To handle it in the computation of the nonrelativistic limit, first a more detailed consideration of $\mathcal{C}_{z + c^2/2}$ is provided.
Define for $z \in \rho(A_0)$ the auxiliary operator $\cT_z$ that formally acts on a sufficiently smooth function $\varphi:\Sigma\to\CC^2$ via
\begin{align*}%\label{e-intt}
    \cT_z\varphi(x):=\lim_{\eps\to 0^+}\int_{\Sigma\setminus B(x,\eps)}t_z(x-y)\varphi(y)d\sigma(y), \hspace{2em} x\in\Sigma,
\end{align*}
with
\begin{align*}%\label{e-tkernel}
t_z(x):=\left(1-i\sqrt{\frac{z^2}{c^2}-\frac{c^2}{4}}|x|\right)\frac{i(\sigma\cdot x)}{4\pi |x|^3}e^{i\sqrt{z^2/c^2-c^2/4}|x|}, \quad x \neq 0.
\end{align*}
Next, the definition of $G_z$ in~\eqref{e-greens} implies 
\begin{align*}
    G_{z+c^2/2}(x)=\left(\frac{z}{c^2}I_4+\begin{pmatrix} I_2 & 0 \\ 0 & 0  \end{pmatrix} +\left(1-i\sqrt{z + \frac{z^2}{c^2}}|x|\right)\frac{i(\al\cdot x)}{c|x|^2}\right)\frac{e^{i\sqrt{z + z^2/c^2}|x|}}{4\pi|x|}.
\end{align*}
This and the definitions of $\cC_z$ and $\mathcal{S}_z$ in equations \eqref{e-cz} and \eqref{e-singlelayer} lead to 
\begin{align}\label{e-newcz}
\cC_{z+c^2/2}=\begin{pmatrix}
\big(\frac{z}{c^2}+1\big)\cS_{z+z^2/c^2} I_2 & \frac{1}{c}\cT_{z+c^2/2} \\
\frac{1}{c}\cT_{z+c^2/2} & \frac{z}{c^2}\cS_{z+z^2/c^2} I_2
\end{pmatrix}.
\end{align}
It follows from the latter representation and~\eqref{C_z_mapping_properties}--\eqref{C_z_dual} that $\cT_{z + c^2/2}$ gives rise to a bounded operator 
\begin{equation}\label{tttbbb}
\mathcal{T}_{z + c^2/2}: H^s(\Sigma; \mathbb{C}^2) \rightarrow H^s(\Sigma; \mathbb{C}^2),\quad s \in \bigl[-\tfrac{1}{2}, \tfrac{1}{2}\bigr],
\end{equation}
and that the anti-dual of $\mathcal{T}_{z + c^2/2}$ satisfies $\mathcal{T}_{z + c^2/2}' = \mathcal{T}_{\overline{z} + c^2/2}$. 
In the next proposition we show that these operators are even uniformly bounded in $c$.

\begin{prop}\label{p-tweakderiv}
  Let $z \in \mathbb{C} \setminus [0, \infty)$ and $c > \sqrt{|z|}$. Then, for any $s \in [-\frac{1}{2}, \frac{1}{2}]$ the operators $\cT_{z+c^2/2}: H^s(\Sigma; \mathbb{C}^2) \rightarrow H^s(\Sigma; \mathbb{C}^2)$ are uniformly bounded in $c$.
\end{prop}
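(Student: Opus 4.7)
The plan is to exploit the fact that the kernel $t_{z+c^2/2}$ depends on $c$ only through the single parameter $\mu_c := z + z^2/c^2$. A direct computation gives $\sqrt{(z+c^2/2)^2/c^2-c^2/4}=\sqrt{\mu_c}$, so that
\[
  t_{z+c^2/2}(x) = -i\,\sigma\cdot\nabla_x\!\left(\frac{e^{i\sqrt{\mu_c}|x|}}{4\pi|x|}\right).
\]
For $c>\sqrt{|z|}$ the points $\mu_c$ lie in a compact subset $M\subset\mathbb{C}\setminus[0,\infty)$ and tend to $z$ as $c\to\infty$. I shall therefore regard $\mathcal{T}_{z+c^2/2}=:\widetilde{\mathcal{T}}_{\mu_c}$ as a member of the family $\{\widetilde{\mathcal{T}}_\mu\}_{\mu\in\mathbb{C}\setminus[0,\infty)}$ of strongly singular integral operators with the analogous kernel. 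By \eqref{tttbbb} each $\widetilde{\mathcal{T}}_\mu$ is bounded on $H^s(\Sigma;\mathbb{C}^2)$, $s\in[-1/2,1/2]$ (for any prescribed $\mu\in\mathbb{C}\setminus[0,\infty)$ one can solve the quadratic $z'^2+(c')^2z'-(c')^2\mu=0$ to realize $\mu=z'+(z')^2/(c')^2$ with $(z',c')$ admissible for \eqref{tttbbb}). It then suffices to show that $\mu\mapsto\widetilde{\mathcal{T}}_\mu\in\mathcal{B}(H^s(\Sigma;\mathbb{C}^2))$ is continuous on $M$, because compactness of $M$ will upgrade this to uniform boundedness.

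Continuity will be proved in the spirit of Lemma~\ref{lemma_single_layer_continuous}. The crucial observation is that the singular term $(4\pi|x|)^{-1}$ in $\Phi_\mu(x):=e^{i\sqrt{\mu}|x|}/(4\pi|x|)$ is independent of $\mu$, so it cancels in the difference $\Phi_{\mu_1}-\Phi_{\mu_2}$; the resulting function extends smoothly through the origin and decays exponentially, uniformly for $\mu_1,\mu_2\in M$ (because $\operatorname{Im}\sqrt{\mu}$ is bounded below on $M$). Consequently the kernel $-i\sigma\cdot\nabla_x[\Phi_{\mu_1}-\Phi_{\mu_2}]$ of $\widetilde{\mathcal{T}}_{\mu_1}-\widetilde{\mathcal{T}}_{\mu_2}$ is no longer strongly singular. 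Using the representation $\Phi_\mu=(-\Delta-\mu)^{-1}\delta_0$ together with the second resolvent identity
\[
  (-\Delta-\mu_1)^{-1}-(-\Delta-\mu_2)^{-1}=(\mu_1-\mu_2)(-\Delta-\mu_1)^{-1}(-\Delta-\mu_2)^{-1},
\]
the difference factorizes as a composition of the two resolvents, the first order operator $-i\sigma\cdot\nabla:H^{s+1}(\mathbb{R}^3;\mathbb{C}^2)\to H^s(\mathbb{R}^3;\mathbb{C}^2)$, the Dirichlet trace and its dual. A Fourier supremum estimate identical in structure to the one in the proof of Lemma~\ref{lemma_single_layer_continuous}—now with an extra factor $|\xi|$ coming from the gradient, which is harmless since the denominator $|(|\xi|^2-\mu_1)(|\xi|^2-\mu_2)|$ grows like $|\xi|^4$ and $M$ is bounded away from $[0,\infty)$—then yields
\[
  \bigl\|\widetilde{\mathcal{T}}_{\mu_1}-\widetilde{\mathcal{T}}_{\mu_2}\bigr\|_{H^s(\Sigma;\mathbb{C}^2)\to H^s(\Sigma;\mathbb{C}^2)} \le K(M)\,|\mu_1-\mu_2|
\]
for all $\mu_1,\mu_2\in M$ and the relevant $s$. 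Fixing $\mu_0\in M$ and combining with the boundedness of $\widetilde{\mathcal{T}}_{\mu_0}$ gives the uniform bound.

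The main technical obstacle lies in the Sobolev bookkeeping at the endpoint $s=\tfrac12$: one must arrange the compositions of traces, $-i\sigma\cdot\nabla$, and the two resolvents so that the resulting operator lands in $H^{1/2}(\Sigma;\mathbb{C}^2)$ with a Fourier multiplier whose supremum is finite and uniform for $\mu_1,\mu_2\in M$. The duality $\widetilde{\mathcal{T}}_\mu'=\widetilde{\mathcal{T}}_{\overline{\mu}}$, inherited from $\mathcal{C}_z^*=\mathcal{C}_{\overline{z}}$ together with \eqref{e-newcz}, then reduces $s=-\tfrac12$ to $s=\tfrac12$, and interpolation fills in the intermediate values $s\in(-\tfrac12,\tfrac12)$, completing the proof.
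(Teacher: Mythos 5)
Your key structural observation is correct and elegant: indeed $\sqrt{(z+c^2/2)^2/c^2 - c^2/4} = \sqrt{\mu_c}$ with $\mu_c := z + z^2/c^2$, and $t_{z+c^2/2}(x) = -i\,\sigma\cdot\nabla_x\bigl(e^{i\sqrt{\mu_c}|x|}/(4\pi|x|)\bigr)$, so the entire $c$-dependence sits in the single Helmholtz parameter $\mu_c$. Your plan---Lipschitz continuity of $\mu\mapsto\widetilde{\mathcal{T}}_\mu$ via the second resolvent identity in the spirit of Lemma~\ref{lemma_single_layer_continuous}, compactness of the parameter set, and fixed-$\mu$ boundedness from~\eqref{tttbbb}, followed by duality and interpolation---is a genuinely different and in places slicker route than the paper's. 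The paper instead decomposes the kernel in physical space, $t_{z+c^2/2}=d+\widetilde t_{z,c}$ with the $c$-independent Riesz-type part $d(x)=i(\sigma\cdot x)/(4\pi|x|^3)$ and a remainder whose power-series coefficients are estimated pointwise, giving Hilbert--Schmidt bounds on $\widetilde T_{z,c}:L^2(\Sigma;\mathbb{C}^2)\to H^1(\Omega_+;\mathbb{C}^2)$ uniform in $c$; the Riesz part is handled via \cite{N01}, and duality plus interpolation close the argument exactly as you describe.

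There is, however, a gap in your compactness claim. For $c>\sqrt{|z|}$ one only has $|\mu_c|\le |z|(1+|z|/c^2)<2|z|$, which keeps $\mu_c$ bounded but \emph{not} bounded away from $[0,\infty)$. Indeed, for $z<0$ real one has $\mu_c=z(1-|z|/c^2)\to 0^-$ as $c\to\sqrt{|z|}^{\,+}$, so the set $\{\mu_c:c>\sqrt{|z|}\}$ has $0$ in its closure and is not contained in any compact $M\subset\mathbb{C}\setminus[0,\infty)$. This matters: exactly as in Lemma~\ref{lemma_single_layer_continuous}, the Lipschitz constant in your Fourier-multiplier estimate degenerates as $\mu_1,\mu_2\to 0$, since the denominator $|(|\xi|^2-\mu_1)(|\xi|^2-\mu_2)|$ collapses near $\xi=0$ and the extra factor $|\xi|$ from the gradient does not rescue the supremum (test at $|\xi|\sim\sqrt{|\mu_1|}$). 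Hence your argument yields uniform boundedness only for $c\ge c_0$ with any fixed $c_0>\sqrt{|z|}$---which is all the rest of the paper actually uses---but not over the full range $c>\sqrt{|z|}$ asserted in the proposition. The paper's power-series decomposition sidesteps this because Step~2 there only needs the crude bound $|\mu_c|\le 2|z|$ and never requires $\mu_c$ to stay away from $[0,\infty)$. To close the gap along your lines you would have to show that $\mu\mapsto\widetilde{\mathcal T}_\mu$ extends continuously to $\mu=0$ with limit the Riesz operator $\mathcal D$, which essentially amounts to redoing the paper's Step~1--2.
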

\begin{proof}
The proof of this proposition is split into three steps. In \textit{Step~1} the integral kernel $t_{z+c^2/2}$ of $\mathcal{T}_{z + c^2/2}$ is decomposed into a singular part $d$, which is independent of $c$, and a remainder $\widetilde{t}_{z, c}$ which is easier to analyze. In \textit{Step~2} it is shown that the integral operator with kernel $\widetilde{t}_{z, c}$ gives rise to a bounded operator from $L^2(\Sigma; \mathbb{C}^2)$ to $H^1(\Omega_+; \mathbb{C}^2)$ that is uniformly bounded in $c$. By combining the results from \textit{Step~1 \& 2}, the proof of the proposition is completed in \textit{Step~3}.

\textit{Step~1.} 
Rewriting the exponential in the kernel 
\begin{align*}%\label{e-tkernel34}
t_{z + c^2/2}(x)=\left(1-i\sqrt{z+\frac{z^2}{c^2}}|x|\right)\frac{i(\sigma\cdot x)}{4\pi |x|^3}e^{i\sqrt{z+z^2/c^2}|x|}, \quad x \neq 0,
\end{align*}
as a power series shows that the terms with $|x|^{-2}$ cancel out. After combining and rearranging the coefficients of the remaining terms we obtain
\begin{equation}\label{e-power}
t_{z + c^2/2}(x)= d(x)+ \widetilde{t}_{z, c}(x),
\end{equation}
where 
\begin{align}\label{e-rkernel}
d(x)=\frac{i(\sigma\cdot x)}{4 \pi|x|^3},\quad x \neq 0, 
\end{align}
and
\begin{equation*} %\label{def_tilde_t_kernel}
  \wt{t}_{z, c}(x)=\sum_{k=0}^\infty\left(\frac{i^{k+3}}{(k+2)!}+\frac{i^{k+1}}{(k+1)!}\right)\left(\sqrt{z + \frac{z^2}{c^2}}\right)^{k+2} |x|^{k-1} \frac{ \sigma\cdot x }{4\pi}, \quad x \neq 0.
\end{equation*}

\textit{Step~2}. 
Now we consider $\wt{t}_{z, c}(x-y)$ for $x\in\Omega_+$ and $y\in\Sigma$ and define the 
integral operator $\widetilde{T}_{z, c}$ for sufficiently smooth functions $\varphi: \Sigma \rightarrow \mathbb{C}^2$ as
\begin{equation} \label{def_tilde_T}
  \widetilde{T}_{z, c} \varphi(x) := \int_\Sigma \widetilde{t}_{z, c}(x-y) \varphi(y) d \sigma(y), \quad x \in \Omega_+.
\end{equation}
We will show that
\begin{equation} \label{tilde_T_t_mapping_properties}
  \widetilde{T}_{z, c}: L^2(\Sigma; \mathbb{C}^2) \rightarrow H^1(\Omega_+; \mathbb{C}^2) \quad \text{is uniformly bounded in } c.
\end{equation}
For this, we first establish some simple bounds on $\wt{t}_{z, c}$ and its first order derivatives that are independent of $c$. 
Observe that for a constant $K_1=K_1(z)$ one has for all $x \in \Omega_+$ and $y \in \Sigma$
\begin{equation} \label{e-tbound}
\begin{aligned}
\big|\wt{t}_{z, c}(x-y)\big|\leq \sum_{k=0}^\infty\frac{2}{k!}\frac{\big(\sqrt{2|z|}\big)^{k+2}|x - y|^k}{4\pi} \leq K_1,
\end{aligned}
\end{equation}
as the latter series is absolutely converging and defines a continuous function on the compact set $\overline{\Omega_+} \times \Sigma$.
Likewise, there exists a constant $K_2=K_2(z)$ such that for the partial derivatives of $\wt{t}_{z, c}$ and all $x \in \Omega_+$ and $y \in \Sigma$ one has
\begin{equation} \label{e-tderivbound}
    \begin{aligned}
        \big|\pt_{x_j} \wt{t}_{z, c}(x-y)\big|&\leq \sum_{k=0}^\infty \frac{2}{k!}\frac{\big(\sqrt{2|z|}\big)^{k+2}}{4\pi} \big|\pt_{x_j}\big((\sigma\cdot (x-y))|x-y|^{k-1}\big) \big| \\
        &= \sum_{k=0}^\infty \frac{\big(\sqrt{2|z|}\big)^{k+2}}{2\pi k!} \bigg|\sigma_j +\frac{(k-1)(x_j-y_j)(\sigma\cdot (x-y))}{|x-y|^2}\bigg||x-y|^{k-1} \\
        &\leq \frac{K_2}{|x-y|}.
    \end{aligned}
\end{equation}
Since $\Omega_+$ is bounded, \eqref{e-tbound} and~\eqref{e-tderivbound} imply $(x,y)\mapsto \wt{t}_{z, c}(x-y) \in L^2(\Omega_+ \times \Sigma; \mathbb{C}^{2 \times 2})$, $(x,y) \mapsto \pt_{x_j} \wt{t}_{z, c}(x-y) \in L^2(\Omega_+ \times \Sigma; \mathbb{C}^{2 \times 2})$ and there exists a constant $K_3=K_3(z)$ such that
\begin{equation} \label{L_2_bound_tilde_T_kernel}
  \int_{\Omega_+} \int_\Sigma \big|\widetilde{t}_{z, c}(x-y)\big|^2 d\sigma(y) dx \leq K_3, \qquad  \int_{\Omega_+} \int_\Sigma \big| \partial_{x_j} \widetilde{t}_{z, c}(x-y)\big|^2 d\sigma(y) dx \leq K_3.
\end{equation}
Furthermore, using that for any $y \in \Sigma$ one has $x \mapsto \widetilde{t}_{z, c}(x-y) \in C^\infty(\Omega_+; \mathbb{C}^{2 \times 2})$,~\eqref{e-tderivbound}, and the dominated convergence theorem, it is not difficult to see that for any $\varphi \in L^2(\Sigma; \mathbb{C}^2)$ one has $\widetilde{T}_{z, c} \varphi \in C^1(\Omega_+; \mathbb{C}^2)$
and
\begin{equation} \label{derivative_tilde_T}
  \partial_{x_j} \widetilde{T}_{z, c} \varphi(x) = \int_\Sigma \partial_{x_j} \widetilde{t}_{z, c}(x-y) \varphi(y) d \sigma(y), 
  \quad  x \in \Omega_+.
\end{equation}
Combining \eqref{def_tilde_T} and \eqref{derivative_tilde_T} with~\eqref{L_2_bound_tilde_T_kernel} shows that $\widetilde{T}_{z, c}, \partial_{x_j} \widetilde{T}_{z, c}: L^2(\Sigma; \mathbb{C}^2) \rightarrow L^2(\Omega_+; \mathbb{C}^2)$ are Hilbert-Schmidt operators that are uniformly bounded in $c$, and hence~\eqref{tilde_T_t_mapping_properties} is true.

\textit{Step~3}.
We verify that $\mathcal{T}_{z + c^2/2}: H^{s}(\Sigma; \mathbb{C}^2) \rightarrow H^{s}(\Sigma; \mathbb{C}^2)$ is uniformly bounded in $c$ for any 
$s \in [-\frac{1}{2}, \frac{1}{2}]$. First, we do this for $s = \frac{1}{2}$. For that purpose,
consider the operator $\gamma_D \widetilde{T}_{z, c}: L^2(\Sigma; \mathbb{C}^2) \rightarrow H^{1/2}(\Sigma; \mathbb{C}^2)$, which is uniformly bounded in 
$c$ by the results in \textit{Step~2}, and hence also the restriction 
\begin{equation}\label{uniform_bounded22}
\gamma_D \widetilde{T}_{z, c}: H^{1/2}(\Sigma; \mathbb{C}^2) \rightarrow H^{1/2}(\Sigma; \mathbb{C}^2)
\end{equation}
is uniformly bounded in 
$c$. Furthermore, we shall use that 
\begin{equation}\label{gamgam}
\gamma_D\widetilde{T}_{z, c}\varphi(x)=\int_\Sigma \widetilde{t}_{z, c}(x-y) \varphi(y) d \sigma(y), \quad x \in \Sigma,
\end{equation}
holds for all $\varphi\in L^2(\Sigma; \mathbb{C}^2)$ (and, in particular, for all $\varphi\in H^{1/2}(\Sigma; \mathbb{C}^2)$).
In fact, the estimate \eqref{e-tbound} extends to $\overline{\Omega_+}\times\Sigma$ and this implies that the function $\widetilde{T}_{z, c} \varphi:\Omega_+\rightarrow \mathbb{C}^2$ admits a continuous extension onto $\overline{\Omega_+}$, which shows \eqref{gamgam}.  

Next, recall that the function $d$ is defined by~\eqref{e-rkernel}. For $\varphi\in H^{1/2}(\Sigma; \mathbb{C}^2)$ consider the integral operator 
\begin{equation*}
  \mathcal{D} \varphi(x) := \lim_{\varepsilon \rightarrow 0^+} \int_{\Sigma \setminus B(x, \varepsilon)} d(x-y) \varphi(y) d \sigma(y), \quad x \in \Sigma,
\end{equation*}
which is bounded in $H^{1/2}(\Sigma; \mathbb{C}^2)$ by \cite[Theorem~4.3.1]{N01} as $d$ is a homogeneous kernel of order $0$ in the sense of \cite[Section~4.3.2]{N01}, see also \cite[Example~4.2]{N01} (the boundedness of $\mathcal D$ would also follow from 
\eqref{345} and the reasoning below, as 
$\mathcal{T}_{z + c^2/2}$ is bounded in $H^{1/2}(\Sigma; \mathbb{C}^2)$ by \eqref{tttbbb}). 
From~\eqref{e-power} and \eqref{gamgam} we obtain 
\begin{equation}\label{345}
\mathcal{T}_{z + c^2/2}\varphi = \mathcal{D}\varphi + \gamma_D \widetilde{T}_{z, c}\varphi,\quad \varphi\in H^{1/2}(\Sigma; \mathbb{C}^2),
\end{equation}
and now it follows from the uniform boundedness of the operator $\gamma_D \widetilde{T}_{z, c}$ in \eqref{uniform_bounded22} 
that also $\mathcal{T}_{z + c^2/2}: H^{1/2}(\Sigma; \mathbb{C}^2) \rightarrow H^{1/2}(\Sigma; \mathbb{C}^2)$ is uniformly bounded in $c$.

To show the claim for $s=-\frac{1}{2}$,  recall that $\mathcal{T}_{z + c^2/2}$ has a bounded extension in $H^{-1/2}(\Sigma; \mathbb{C}^2)$ given by $\mathcal{T}_{z + c^2/2} = (\mathcal{T}_{\overline{z} + c^2/2})'$. 
Hence, by the already shown uniform boundedness in $c$ of $\mathcal{T}_{\overline{z} + c^2/2}$ in $H^{1/2}(\Sigma; \mathbb{C}^2)$  also $(\mathcal{T}_{\overline{z} + c^2/2})' = \mathcal{T}_{z + c^2/2}: H^{-1/2}(\Sigma; \mathbb{C}^2) \rightarrow H^{-1/2}(\Sigma; \mathbb{C}^2)$ is uniformly bounded in $c$.
Finally, as $\mathcal{T}_{z + c^2/2}$ is uniformly bounded in $H^{-1/2}(\Sigma; \mathbb{C}^2)$ and $H^{1/2}(\Sigma; \mathbb{C}^2)$ in $c$, it follows with an interpolation argument using
\cite[Theorem B.2 and Theorem B.11]{M00} that $\mathcal{T}_{z + c^2/2}: H^{s}(\Sigma; \mathbb{C}^2) \rightarrow H^{s}(\Sigma; \mathbb{C}^2)$ is also 
uniformly bounded  in $c$ for any $s \in (-\frac{1}{2}, \frac{1}{2})$. This finishes the proof.
\end{proof}

Next, the convergence of $( \vartheta_c + \mathcal{M}_c \mathcal{C}_{z + c^2/2} \mathcal{M}_c )^{-1}$ is analyzed. Recall that $a_\pm$ is defined by~\eqref{def_coefficients}. By~\eqref{e-newcz} one has the block structure
\begin{align*}
\vartheta_c + \mathcal{M}_c \mathcal{C}_{z + c^2/2} \mathcal{M}_c =
\begin{pmatrix}
    \big( \frac{1}{c} a_+ +\left(\frac{z}{c^2}+1\right)\cS_{z+z^2/c^2} \big) I_2 & \frac{1}{\sqrt{c}}\cT_{z+c^2/2} \\
    \frac{1}{\sqrt{c}}\cT_{z+c^2/2} & \big(a_- +\frac{z}{c}\cS_{z+z^2/c^2}\big) I_2
\end{pmatrix}.
\end{align*}
To proceed, note that for $z \in \mathbb{C} \setminus [0, \infty)$ and $c > 0$ sufficiently large the operator 
$a_- +\frac{z}{c}\cS_{z+z^2/c^2}$ is boundedly invertible in $H^s(\Sigma; \mathbb{C})$, $s \in [-\frac{1}{2}, \frac{1}{2}]$, 
with inverse given by 
\begin{equation}\label{invi}
 \left(a_-+\frac{z}{c}\cS_{z+z^2/c^2}\right)^{-1}=\frac{1}{a_-}\sum_{n=0}^\infty\left(-\frac{z}{a_- c} \cS_{z+z^2/c^2}\right)^n,
\end{equation}
as $a_- < 0$ and by Lemma~\ref{lemma_single_layer_continuous} the operator $\cS_{z+z^2/c^2}$ is (uniformly) bounded in $H^s(\Sigma; \mathbb{C})$ in $c$.
Thus, one can write
\begin{equation}\label{decomposition_Schur_complement}
\begin{aligned}
\vartheta_c + \mathcal{M}_c \mathcal{C}_{z + c^2/2} \mathcal{M}_c =&
\begin{pmatrix}
I_2 & \frac{1}{\sqrt{c}}\cT_{z+c^2/2} \left(a_-+\frac{z}{c}\cS_{z+z^2/c^2}\right)^{-1} \\
0 & I_2
\end{pmatrix} \\
&\cdot
\begin{pmatrix}
    \wt{\cS}_{z, c} & 0 \\
    0 & (a_-+\frac{z}{c}\cS_{z+z^2/c^2}) I_2
\end{pmatrix}
\begin{pmatrix}
    I_2 & 0  \\
    \frac{1}{\sqrt{c}} \left(a_-+\frac{z}{c}\cS_{z+z^2/c^2}\right)^{-1} \cT_{z+c^2/2} & I_2
\end{pmatrix},
\end{aligned}
\end{equation}
where the Schur complement $\wt{\cS}_{z, c}$ is given by 
\begin{align}\label{e-schur}
\wt{\cS}_{z, c}=\frac{1}{c} a_+ I_2 + \left(\frac{z}{c^2}+1\right)\cS_{z+z^2/c^2} I_2 -\frac{1}{c}\cT_{z+c^2/2}\left(a_-+\frac{z}{c}\cS_{z+z^2/c^2}\right)^{-1}\cT_{z+c^2/2}.
\end{align}
The first and the third factor in~\eqref{decomposition_Schur_complement} are bijective in $H^{1/2}(\Sigma; \mathbb{C}^4)$. Since $\vartheta_c + \mathcal{M}_c \mathcal{C}_{z + c^2/2} \mathcal{M}_c$ has this property as well by Proposition~\ref{proposition_delta_op}, we conclude that also $\wt{\cS}_{z, c}$ is bijective in $H^{1/2}(\Sigma; \mathbb{C}^2)$. In the following proposition, the convergence of $\wt{\cS}^{-1}_{z, c}$ is analyzed.

\begin{prop}\label{proposition_sconverge}
Let $z < 0$ and $c>\sqrt{|z|}$.
Then, there exists a constant $K(z)$ such that for all $c$ sufficiently large 
\begin{align} \label{Schur_inverse_convergence}
    \big\|\wt{\cS}^{-1}_{z, c}-\cS_z^{-1}I_2\big\|_{H^{3/2}(\Sigma; \mathbb{C}^2)\to H^{-1/2}(\Sigma; \mathbb{C}^2)}\leq \frac{K(z)}{c}.
\end{align}
Moreover, $\wt{\cS}^{-1}_{z, c}:H^{1/2}(\Sigma; \mathbb{C}^2)\to H^{-1/2}(\Sigma; \mathbb{C}^2)$ is uniformly bounded in $c$. 
\end{prop}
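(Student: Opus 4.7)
The plan is, first, to prove the uniform boundedness statement in (ii) via a coercivity (energy) argument, and then to derive the convergence estimate \eqref{Schur_inverse_convergence} from a standard resolvent-type identity together with the quantitative expansion $\widetilde{\mathcal{S}}_{z,c} - \mathcal{S}_z I_2 = O(1/c)$ in $H^{1/2}\to H^{1/2}$.

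\textbf{Proof of (ii).} For $z<0$ and $c>\sqrt{|z|}$ one has $z+z^2/c^2<0$, so $\mathcal{S}_{z+z^2/c^2}$ is self-adjoint and non-negative on $L^2(\Sigma;\mathbb{C})$ (by the remark after \eqref{mapping_properties_S_mu}). A direct inspection of the kernel $t_{z+c^2/2}$ (which in this regime becomes $(1+r|x|)\frac{i\sigma\cdot x}{4\pi|x|^3}e^{-r|x|}$ with $r>0$ real) shows $t(x-y)^*=t(y-x)$, so $\mathcal{T}_{z+c^2/2}$ is self-adjoint on $L^2(\Sigma;\mathbb{C}^2)$. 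Since $a_-<0$ and $(z/c)\mathcal{S}_{z+z^2/c^2}\leq 0$, the operator $a_-+\frac{z}{c}\mathcal{S}_{z+z^2/c^2}$ is strictly negative self-adjoint, hence so is its inverse. Consequently, each term in
\begin{equation*}
\bigl\langle \widetilde{\mathcal{S}}_{z,c}h,h\bigr\rangle_{L^2}
= \frac{a_+}{c}\|h\|_{L^2}^2
+ \Bigl(1+\frac{z}{c^2}\Bigr)\langle \mathcal{S}_{z+z^2/c^2}h,h\rangle
- \frac{1}{c}\bigl\langle \bigl(a_-+\tfrac{z}{c}\mathcal{S}_{z+z^2/c^2}\bigr)^{-1}\mathcal{T}_{z+c^2/2}h,\mathcal{T}_{z+c^2/2}h\bigr\rangle
\end{equation*}
is non-negative. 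By the standard coercivity of the single layer potential for negative spectral parameter, combined with Lemma~\ref{lemma_single_layer_continuous} (giving uniformity as $z+z^2/c^2\to z$), there exists $c_0>0$, independent of $c$, with $\langle \mathcal{S}_{z+z^2/c^2}h,h\rangle\geq c_0\|h\|_{H^{-1/2}(\Sigma;\mathbb{C}^2)}^2$. Thus $\langle \widetilde{\mathcal{S}}_{z,c}h,h\rangle\geq \tfrac{c_0}{2}\|h\|_{H^{-1/2}}^2$ for all $c$ large, and setting $f=\widetilde{\mathcal{S}}_{z,c}h$ the $H^{1/2}$--$H^{-1/2}$ duality pairing yields $\tfrac{c_0}{2}\|h\|_{H^{-1/2}}^2\leq \langle f,h\rangle\leq \|f\|_{H^{1/2}}\|h\|_{H^{-1/2}}$, hence $\|\widetilde{\mathcal{S}}_{z,c}^{-1}\|_{H^{1/2}\to H^{-1/2}}\leq 2/c_0$ uniformly in $c$.

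\textbf{Proof of \eqref{Schur_inverse_convergence}.} Write $R_c:=\widetilde{\mathcal{S}}_{z,c}-\mathcal{S}_zI_2=\frac{a_+}{c}I_2+\frac{z}{c^2}\mathcal{S}_{z+z^2/c^2}I_2+(\mathcal{S}_{z+z^2/c^2}-\mathcal{S}_z)I_2-\frac{1}{c}\mathcal{T}_{z+c^2/2}(a_-+\frac{z}{c}\mathcal{S}_{z+z^2/c^2})^{-1}\mathcal{T}_{z+c^2/2}$ and use the identity
\begin{equation*}
\widetilde{\mathcal{S}}_{z,c}^{-1}-\mathcal{S}_z^{-1}I_2=-\widetilde{\mathcal{S}}_{z,c}^{-1}R_c\mathcal{S}_z^{-1}
\end{equation*}
as an operator composition $H^{3/2}\xrightarrow{\mathcal{S}_z^{-1}}H^{1/2}\xrightarrow{R_c}H^{1/2}\xrightarrow{\widetilde{\mathcal{S}}_{z,c}^{-1}}H^{-1/2}$. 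Termwise $H^{1/2}\to H^{1/2}$ estimates give $\|R_c\|_{H^{1/2}\to H^{1/2}}=O(1/c)$: the identity piece is $O(1/c)$; Lemma~\ref{lemma_single_layer_continuous} bounds the two $\mathcal{S}$-terms by $O(1/c^2)$; and for the $\mathcal{T}$-term, Proposition~\ref{p-tweakderiv} gives uniform boundedness of $\mathcal{T}_{z+c^2/2}$ on $H^{1/2}$ while the Neumann series \eqref{invi} provides uniform boundedness of the middle factor, so with the prefactor $1/c$ this term is $O(1/c)$. Together with the bound on $\mathcal{S}_z^{-1}:H^{3/2}\to H^{1/2}$ from \eqref{mapping_properties_S_mu} and the uniform bound from (ii), this yields \eqref{Schur_inverse_convergence}.

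\textbf{Main obstacle.} The hard step is (ii): the zeroth-order piece $\frac{a_+}{c}I_2$ in $\widetilde{\mathcal{S}}_{z,c}$ does not improve Sobolev regularity, so one cannot invert $\widetilde{\mathcal{S}}_{z,c}$ by a Neumann series around $\mathcal{S}_z$ in the natural setting $H^{-1/2}\to H^{1/2}$, and a naive operator-norm analysis on $H^{1/2}\to H^{1/2}$ cannot be uniform in $c$. The coercivity trick circumvents this by settling for the weaker (but uniform) $H^{1/2}\to H^{-1/2}$ bound, crucially exploiting that for real $z<0$ all three constituent blocks of $\widetilde{\mathcal{S}}_{z,c}$ contribute with the \emph{same} sign in the quadratic form.
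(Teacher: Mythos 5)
Your argument is correct, and for the convergence estimate \eqref{Schur_inverse_convergence} (your last paragraph) it matches the paper's Step~4 almost verbatim: the same resolvent identity, the same termwise $O(1/c)$ bound on $\widetilde{\mathcal S}_{z,c}-\mathcal S_z I_2$ in $H^{1/2}$, and the same chaining through the mapping properties of $\mathcal S_z^{-1}$.

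Where you genuinely depart from the paper is in establishing the uniform bound on $\widetilde{\mathcal S}_{z,c}^{-1}:H^{1/2}\to H^{-1/2}$. The paper proceeds in two stages: first it shows $\widetilde{\mathcal S}_{z,c}\ge \tfrac{a_+}{c}$ in $L^2$ to obtain $\|\widetilde{\mathcal S}_{z,c}^{-1}\|_{L^2\to L^2}\le K_2 c$, and then it uses the identity $\widetilde{\mathcal S}_{z,c}^{-1}=\mathcal S_z^{-1}I_2-\widetilde{\mathcal S}_{z,c}^{-1}(\widetilde{\mathcal S}_{z,c}-\mathcal S_z I_2)\mathcal S_z^{-1}I_2$ to trade the $O(c)$ growth against the $O(1/c)$ smallness, yielding a uniform $H^1\to L^2$ bound; duality and interpolation then give $H^{1/2}\to H^{-1/2}$. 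You instead settle the question in one stroke by a Lax--Milgram-style coercivity argument: you observe that all three summands of $\widetilde{\mathcal S}_{z,c}$ are nonnegative in the quadratic form (exactly the sign structure the paper also exploits, but pushes only to an $L^2$ lower bound), and then invoke the stronger coercivity $\langle\mathcal S_\mu h,h\rangle\ge c_0\|h\|_{H^{-1/2}}^2$ for $\mu<0$. This is cleaner and avoids interpolation altogether. The one thing you should be aware of is that this $H^{-1/2}$-coercivity of $\mathcal S_\mu$ is not explicitly recorded in the paper (which only states $L^2$-nonnegativity and bijectivity $H^{-1/2}\to H^{1/2}$); it is indeed standard and in fact follows from those two stated properties together with self-adjointness (a bounded, self-adjoint, nonnegative, boundedly invertible operator on a Hilbert space is bounded below), but spelling that implication out, or giving a reference, would close the small gap. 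You also implicitly rely on the prior fact (established in the paper just before the proposition, via the Schur decomposition and Proposition~\ref{proposition_delta_op}) that $\widetilde{\mathcal S}_{z,c}$ is bijective on $H^{1/2}(\Sigma;\mathbb C^2)$, so that $h=\widetilde{\mathcal S}_{z,c}^{-1}f\in H^{1/2}$ and your pairing is well-defined; worth stating.
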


\begin{proof}
The proof of this proposition is split into four steps. In \textit{Step~1} we show that for $s \in [-\frac{1}{2}, \frac{1}{2}]$ there exists a constant $K_1 = K_1(z, s)$ 
such that
\begin{equation} \label{convergence_Schur_complement}
  \big\| \wt{\cS}_{z, c}-\cS_z I_2 \big\|_{H^s(\Sigma; \mathbb{C}^2) \to H^s(\Sigma; \mathbb{C}^2)} \leq \frac{K_1}{c}
\end{equation}
for $c>0$ sufficiently large.
In \textit{Step~2} we verify that the realization of $\wt{\cS}_{z, c}$ in $L^2(\Sigma; \mathbb{C}^2)$ is bijective and there exists a constant $K_2 $ such that for $c>0$ sufficiently large 
\begin{equation} \label{bound_Schur_inverse_L2}
  \big\| \wt{\cS}_{z, c}^{-1} \big\|_{L^2(\Sigma; \mathbb{C}^2) \to L^2(\Sigma; \mathbb{C}^2)} \leq K_2 c.
\end{equation}
Using this, we show in \textit{Step~3} our claim that $\wt{\cS}^{-1}_{z, c}:H^{1/2}(\Sigma; \mathbb{C}^2)\to H^{-1/2}(\Sigma; \mathbb{C}^2)$ is uniformly bounded in $c$, while in \textit{Step~4} we prove \eqref{Schur_inverse_convergence}.

\textit{Step~1}. For $s \in [-\frac{1}{2}, \frac{1}{2}]$ fixed and $c>0$ sufficiently large we obtain the estimate
\begin{equation}\label{hoho}
  \begin{split}
    \big\| &\wt{\cS}_{z, c} -\cS_z I_2 \big\|_{H^s(\Sigma; \mathbb{C}^2) \to H^s(\Sigma; \mathbb{C}^2)} 
    \leq \left\| (\cS_{z+z^2/c^2} - \cS_z) I_2 \right\|_{H^s(\Sigma; \mathbb{C}^2) \to H^s(\Sigma; \mathbb{C}^2)} \\
    &+ \frac{1}{c}\left\|  a_+ I_2 + \frac{z}{c}\cS_{z+z^2/c^2} I_2 -\cT_{z+c^2/2}\left(a_-+\frac{z}{c}\cS_{z+z^2/c^2}\right)^{-1}\cT_{z+c^2/2} \right\|_{H^s(\Sigma; \mathbb{C}^2) \to H^s(\Sigma; \mathbb{C}^2)}
  \end{split}
\end{equation}
from \eqref{e-schur}. For the first term on the right-hand side of \eqref{hoho} one has by Lemma~\ref{lemma_single_layer_continuous} 
$$\bigl\| (\cS_{z+z^2/c^2} - \cS_z) I_2 \bigr\|_{H^s(\Sigma; \mathbb{C}^2) \to H^s(\Sigma; \mathbb{C}^2)}\leq 
\bigl\| (\cS_{z+z^2/c^2} - \cS_z) I_2 \bigr\|_{H^{-3/2}(\Sigma; \mathbb{C}^2) \to H^{3/2}(\Sigma; \mathbb{C}^2)}\leq K_1' \frac{z^2}{c^2}
$$
with some constant $K_1'= K_1'(z)$. Note also that $\cS_{z+z^2/c^2}$ is uniformly bounded in $H^s(\Sigma; \mathbb{C})$ for $c>0$ sufficiently large by Lemma~\ref{lemma_single_layer_continuous}.
Therefore,
since $a_- < 0$ we conclude from \eqref{invi} and the estimate
\begin{equation*}
 \left\|\left(a_-+\frac{z}{c}\cS_{z+z^2/c^2}\right)^{-1}\right\|_{H^s(\Sigma; \mathbb{C}) \to H^s(\Sigma; \mathbb{C})}
 \leq\frac{1}{-a_-} \left(1 - \frac{z}{a_-c}\Vert\cS_{z+z^2/c^2}\Vert_{H^s(\Sigma; \mathbb{C}) \to H^s(\Sigma; \mathbb{C})}\right)^{-1} 
 \end{equation*}
that $(a_-+\frac{z}{c}\cS_{z+z^2/c^2})^{-1}$ is also uniformly bounded in $H^s(\Sigma; \mathbb{C})$ for $c>0$ sufficiently large. 
Combining this with Proposition~\ref{p-tweakderiv} it follows that the second term on the right-hand side of \eqref{hoho} is bounded by $\frac{K_1''}{c}$ with some constant
$K_1'' = K_1''(z, s)$; thus we conclude \eqref{convergence_Schur_complement}.

\textit{Step~2}. For $z<0$ and $c>0$ sufficiently large we shall now consider the operator 
\begin{align*}%\label{e-schur2}
\wt{\cS}_{z, c}=\frac{1}{c} a_+ I_2 + \left(\frac{z}{c^2}+1\right)\cS_{z+z^2/c^2} I_2 -\frac{1}{c}\cT_{z+c^2/2}\left(a_-+\frac{z}{c}\cS_{z+z^2/c^2}\right)^{-1}\cT_{z+c^2/2}
\end{align*}
 in $L^2(\Sigma; \mathbb{C}^2)$. Note that for $c>0$ sufficiently large $\cS_{z+z^2/c^2}$ is bounded, self-adjoint and nonnegative in $L^2(\Sigma; \mathbb{C}^2)$ (see the discussion after \eqref{mapping_properties_S_mu}) and hence the same holds for the operator $(\frac{z}{c^2}+1)\cS_{z+z^2/c^2}$.
 Furthermore, for $c>0$ sufficiently large $\cT_{z+c^2/2}$
is bounded and self-adjoint in $L^2(\Sigma; \mathbb{C}^2)$ (see \eqref{tttbbb}), and together with \eqref{invi} we conclude that also  $\wt{\cS}_{z, c}$ is bounded and self-adjoint. 
As $a_-<0$ and $\cS_{z+z^2/c^2}$ is uniformly bounded in $c$ it is clear that $a_-+\frac{z}{c}\cS_{z+z^2/c^2}$ 
is a negative operator in $L^2(\Sigma; \mathbb{C}^2)$ for $c>0$ sufficiently large, and the same is true for its inverse. Therefore, 
$$
-\frac{1}{c}\cT_{z+c^2/2}\left(a_-+\frac{z}{c}\cS_{z+z^2/c^2}\right)^{-1}\cT_{z+c^2/2}
$$
is a nonnegative operator in $L^2(\Sigma; \mathbb{C}^2)$ for $c>0$ sufficiently large. This implies $\wt{\cS}_{z, c}\geq\frac{a_+}{c}$ for $c>0$ sufficiently large,
which in turn yields \eqref{bound_Schur_inverse_L2} with $K_2=a_+^{-1}$.

\textit{Step~3}. We claim that $\wt{\cS}^{-1}_{z, c}: H^{1/2}(\Sigma; \mathbb{C}^2) \to H^{-1/2}(\Sigma; \mathbb{C}^2)$ is uniformly bounded in $c$. 
For this it suffices to prove that for $c>0$ sufficiently large there exists a constant $K_3=K_3(z)$ such that
\begin{align}\label{e-interp1}
    \big\| \wt{\cS}_{z, c}^{-1} \big\|_{H^{1}(\Sigma; \mathbb{C}^2) \to L^2(\Sigma; \mathbb{C}^2)} \leq K_3,
\end{align}
as then by duality and formal symmetry one also has 
\begin{align*}
    \big\| \wt{\cS}_{z, c}^{-1} \big\|_{L^2(\Sigma; \mathbb{C}^2) \to H^{-1}(\Sigma; \mathbb{C}^2)} \leq K_3,
\end{align*}
and an interpolation argument (see \cite[Theorem B.2 and Theorem B.11]{M00}) leads to the assertion.

To show~\eqref{e-interp1}, we use
\begin{align}\label{e-mapping1}
    \wt{\cS}_{z, c}^{-1}=\cS_z^{-1} I_2 -\wt{\cS}_{z, c}^{-1}\left(\wt{\cS}_{z, c} - \cS_z I_2\right) \cS_z^{-1} I_2
\end{align}
and the fact that $\cS_z^{-1}:H^{1}(\Sigma; \mathbb{C})\rightarrow L^2(\Sigma; \mathbb{C})$ is bounded; cf. \eqref{mapping_properties_S_mu}.
Using~\eqref{convergence_Schur_complement} for $s=0$ with $K_1=K_1(z,0)$ and~\eqref{bound_Schur_inverse_L2} we obtain
\begin{equation*}
  \begin{split}
     \big\|  \wt{\cS}_{z, c}^{-1}& \big\|_{H^{1}(\Sigma; \mathbb{C}^2) \to L^2(\Sigma; \mathbb{C}^2)} \leq  \|\cS^{-1}_z\|_{H^1(\Sigma; \mathbb{C})\to L^2(\Sigma; \mathbb{C})}  \\
     &\quad + \big\| \wt{\cS}^{-1}_{z, c} \big\|_{L^2(\Sigma; \mathbb{C}^2) \to L^2(\Sigma; \mathbb{C}^2)} \big\| \cS_z I_2 - \wt{\cS}_{z, c} \big\|_{L^2(\Sigma; \mathbb{C}^2) \to L^2(\Sigma; \mathbb{C}^2)} \| \cS^{-1}_z\|_{H^{1}(\Sigma; \mathbb{C}) \to L^2(\Sigma; \mathbb{C})} \\
     &\leq \|\cS^{-1}_z\|_{H^1(\Sigma; \mathbb{C})\to L^2(\Sigma; \mathbb{C})}\left(1+K_2 \cdot c \cdot \frac{K_1}{c} \right),
  \end{split}
\end{equation*}
and hence \eqref{e-interp1} holds.

\textit{Step~4}. Finally, we show~\eqref{Schur_inverse_convergence}. Using again~\eqref{e-mapping1}, the fact that $\mathcal{S}_z: H^{1/2}(\Sigma; \mathbb{C}) \rightarrow H^{3/2}(\Sigma; \mathbb{C})$ is boundedly invertible, and the results from \textit{Step~1} and \textit{Step~3} we obtain
\begin{align*}
    &\big\| \wt{\cS}^{-1}_{z, c}-\cS^{-1}_z I_2 \big\|_{H^{3/2}(\Sigma; \mathbb{C}^2) \to H^{-1/2}(\Sigma; \mathbb{C}^2)} \\
    &\quad \leq \big\| \wt{\cS}^{-1}_{z, c} \big\|_{H^{1/2}(\Sigma; \mathbb{C}^2) \to H^{-1/2}(\Sigma; \mathbb{C}^2)} \big\| \cS_z I_2 - \wt{\cS}_{z, c} \big\|_{H^{1/2}(\Sigma; \mathbb{C}^2) \to H^{1/2}(\Sigma; \mathbb{C}^2)} \| \cS^{-1}_z\|_{H^{3/2}(\Sigma; \mathbb{C}) \to H^{1/2}(\Sigma; \mathbb{C})}\\
    &\quad \leq \frac{K(z)}{c}.
\end{align*}
This completes the proof of Proposition~\ref{proposition_sconverge}.
\end{proof}

Now we are ready to study the convergence of the inverse of $\vartheta_c + \mathcal{M}_c \mathcal{C}_{z + c^2/2} \mathcal{M}_c$.

\begin{prop} \label{proposition_convergence_inverse}
Let $z < 0$ and $c>\sqrt{|z|}$.
Then, there exists a constant $K(z)$ such that for all $c$ sufficiently large 
  \begin{equation*}
    \left\| \big( \vartheta_c + \mathcal{M}_c \mathcal{C}_{z + c^2/2} \mathcal{M}_c \big)^{-1} - \begin{pmatrix} \mathcal{S}_z^{-1} I_2 & 0 \\ 0 & a_-^{-1} I_2 \end{pmatrix} \right\|_{H^{3/2}(\Sigma; \mathbb{C}^4) \rightarrow H^{-1/2}(\Sigma; \mathbb{C}^4)} \leq \frac{K(z)}{\sqrt{c}}.
  \end{equation*}
  Moreover, $\big( \vartheta_c + \mathcal{M}_c \mathcal{C}_{z + c^2/2} \mathcal{M}_c \big)^{-1}: H^{1/2}(\Sigma; \mathbb{C}^4) \to H^{-1/2}(\Sigma; \mathbb{C}^4)$ is uniformly bounded in 
  $c$.
\end{prop}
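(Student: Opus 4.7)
The plan is to invert the Schur complement decomposition \eqref{decomposition_Schur_complement} explicitly and then estimate the resulting four blocks separately, using Proposition~\ref{proposition_sconverge} for the upper-left block, Proposition~\ref{p-tweakderiv} for the blocks carrying off-diagonal $\mathcal{T}_{z+c^2/2}$ factors, and the Neumann series \eqref{invi} for the lower-right block.

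First, write $U_c := \widetilde{\mathcal{S}}_{z,c}$, $V_c := a_- + \frac{z}{c}\mathcal{S}_{z+z^2/c^2}$, and $T_c := \frac{1}{\sqrt{c}}\mathcal{T}_{z+c^2/2}$, so that the three factors in \eqref{decomposition_Schur_complement} become upper/lower triangular with $I_2$ on the diagonal. Inverting each factor (the triangular ones only require flipping signs of their off-diagonal entries) and multiplying, I would obtain
\begin{equation*}
  \bigl( \vartheta_c + \mathcal{M}_c \mathcal{C}_{z + c^2/2} \mathcal{M}_c \bigr)^{-1}
  = \begin{pmatrix} U_c^{-1} & -U_c^{-1} T_c V_c^{-1} \\ -V_c^{-1} T_c U_c^{-1} & V_c^{-1} I_2 + V_c^{-1} T_c U_c^{-1} T_c V_c^{-1} \end{pmatrix}.
\end{equation*}
The target in the statement is $\operatorname{diag}(\mathcal{S}_z^{-1} I_2, \, a_-^{-1} I_2)$, so I would compare the four blocks one by one.

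For the upper-left block, Proposition~\ref{proposition_sconverge} gives directly $\|U_c^{-1} - \mathcal{S}_z^{-1} I_2\|_{H^{3/2}\to H^{-1/2}} \leq K(z)/c$, which is even better than the claimed rate. For the two off-diagonal blocks, the $1/\sqrt{c}$ prefactor sitting inside $T_c$ supplies the rate: using Lemma~\ref{lemma_single_layer_continuous} together with \eqref{invi} one sees that $V_c^{-1}$ is uniformly bounded on every $H^s(\Sigma;\C)$ with $s\in[-1/2,3/2]$; by Proposition~\ref{p-tweakderiv} the operator $\mathcal{T}_{z+c^2/2}$ is uniformly bounded on $H^s(\Sigma;\C^2)$ for $s\in[-1/2,1/2]$; and Proposition~\ref{proposition_sconverge} furnishes the uniform bound $\|U_c^{-1}\|_{H^{1/2}\to H^{-1/2}}\leq K(z)$. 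Reading the compositions right-to-left, $-V_c^{-1} T_c U_c^{-1}$ maps $H^{3/2}\xrightarrow{U_c^{-1}} H^{-1/2}\xrightarrow{T_c} H^{-1/2}\xrightarrow{V_c^{-1}} H^{-1/2}$, and $-U_c^{-1} T_c V_c^{-1}$ maps $H^{3/2}\xrightarrow{V_c^{-1}} H^{3/2}\hookrightarrow H^{1/2}\xrightarrow{T_c} H^{1/2}\xrightarrow{U_c^{-1}} H^{-1/2}$; both compositions therefore have norm at most $K(z)/\sqrt{c}$.

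For the lower-right block I would split
\begin{equation*}
  V_c^{-1} I_2 + V_c^{-1} T_c U_c^{-1} T_c V_c^{-1} - a_-^{-1} I_4 = (V_c^{-1} - a_-^{-1})I_2 + V_c^{-1} T_c U_c^{-1} T_c V_c^{-1}.
\end{equation*}
The first summand is $O(1/c)$ uniformly on any $H^s(\Sigma;\C)$, $s\in[-1/2,3/2]$, by the Neumann series \eqref{invi} and Lemma~\ref{lemma_single_layer_continuous}. The second summand carries two factors of $T_c$, each producing a $1/\sqrt{c}$, so by the mapping properties collected above it is also $O(1/c)$ from $H^{3/2}\to H^{-1/2}$. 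Combining all four block estimates yields the desired bound. The uniform boundedness statement from $H^{1/2}$ to $H^{-1/2}$ follows identically: the diagonal blocks are uniformly bounded by Proposition~\ref{proposition_sconverge} and the Neumann series, while the off-diagonal blocks are even $O(1/\sqrt{c})$.

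The only subtle point, and the place where one has to be careful, is the Sobolev bookkeeping: $U_c^{-1}$ is only known to be bounded on the single scale $H^{1/2}\to H^{-1/2}$ (uniformly) or $H^{3/2}\to H^{-1/2}$ (with the rate), so the two off-diagonal blocks have to be read in opposite directions to route through the scales on which $U_c^{-1}$, $V_c^{-1}$, and $\mathcal{T}_{z+c^2/2}$ all act. Once this routing is chosen the estimates are direct applications of the three preceding propositions.
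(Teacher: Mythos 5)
Your proposal is correct and, modulo presentation, follows the same path as the paper. Both proofs start from the factorization \eqref{decomposition_Schur_complement}, invert it, and then feed in Proposition~\ref{proposition_sconverge} for the Schur complement $U_c^{-1}=\widetilde{\mathcal{S}}_{z,c}^{-1}$, Proposition~\ref{p-tweakderiv} for the $\mathcal{T}_{z+c^2/2}$ factors (each carrying a $1/\sqrt{c}$), and the Neumann series \eqref{invi} together with Lemma~\ref{lemma_single_layer_continuous} for $V_c^{-1}=(a_-+\tfrac{z}{c}\mathcal{S}_{z+z^2/c^2})^{-1}$. The only difference is organizational: you multiply the three inverted triangular factors out into a $2\times 2$ block matrix and bound the four blocks separately, whereas the paper keeps the product form $F_1(c)F_2(c)F_3(c)$ and bounds $F_1F_2F_3 - D$ by the telescoping sum $F_1F_2(F_3-I)+F_1(F_2-D)+(F_1-I)D$ with $D=\operatorname{diag}(\mathcal{S}_z^{-1}I_2,\,a_-^{-1}I_2)$. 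Your Sobolev routing of the two off-diagonal blocks in opposite directions through $H^{3/2}\to H^{1/2}\to H^{-1/2}$ (using the fact that $U_c^{-1}$ is only controlled between $H^{1/2}$ or $H^{3/2}$ and $H^{-1/2}$) is exactly the bookkeeping the paper's factor-by-factor argument also has to perform, just packaged per block rather than per factor; both are equally valid.
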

\begin{proof}
  It follows from~\eqref{decomposition_Schur_complement} that
  \begin{equation}\label{e-inverted}
\begin{aligned}
\big(\vartheta_c + \mathcal{M}_c \mathcal{C}_{z + c^2/2} \mathcal{M}_c \big)^{-1}&= F_1(c) F_2(c) F_3(c),
\end{aligned}
\end{equation}
where
\begin{equation*}
\begin{split} 
F_1(c) &:= \begin{pmatrix}
I_2 & 0 \\
-\frac{1}{\sqrt{c}}\left(a_-+\frac{z}{c}\cS_{z+c^2/2}\right)^{-1}\cT_{z+c^2/2} & I_2
\end{pmatrix}, \\
F_2(c) &:=
\begin{pmatrix}
    \wt{\cS}_{z, c}^{-1} & 0 \\
    0 & \left(a_-+\frac{z}{c}\cS_{z+z^2/c^2}\right)^{-1} I_2
\end{pmatrix}, \\
F_3(c) &:= 
\begin{pmatrix}
    I_2 & -\frac{1}{\sqrt{c}} \cT_{z+c^2/2} \left(a_-+\frac{z}{c}\cS_{z+z^2/c^2}\right)^{-1}  \\
    0 & I_2
\end{pmatrix}.
\end{split}
\end{equation*}
  For $c>0$ sufficiently large we use the uniform boundedness of $\cS_{z+z^2/c^2}$ in $H^s(\Sigma; \mathbb{C})$, $s \in [-\frac{1}{2}, \frac{1}{2}]$, from 
  Lemma~\ref{lemma_single_layer_continuous} to estimate
  \begin{equation*} %\label{convergence_Neumann}
    \begin{split}
    &\left\| \left(a_-+\frac{z}{c}\cS_{z+z^2/c^2}\right)^{-1} - a_-^{-1} \right\|_{H^s(\Sigma; \mathbb{C}) \to H^s(\Sigma; \mathbb{C})} \\
    &\qquad\qquad\qquad\qquad =  -\frac{1}{a_-}\left\|\sum_{n=1}^{\infty} \left(-\frac{z}{a_- c} \cS_{z+z^2/c^2}\right)^n \right\|_{H^s(\Sigma; \mathbb{C}) \to H^s(\Sigma; \mathbb{C})}\\
    &\qquad\qquad\qquad\qquad\leq -\frac{1}{a_-}\left(\frac{\frac{z}{a_-c} \Vert\cS_{z+z^2/c^2} \Vert_{H^s(\Sigma; \mathbb{C}) \to H^s(\Sigma; \mathbb{C})}}{1-\frac{z}{a_- c}\Vert\cS_{z+z^2/c^2} \Vert_{H^s(\Sigma; \mathbb{C}) \to H^s(\Sigma; \mathbb{C})}}\right) \\ 
    &\qquad\qquad\qquad\qquad\leq \frac{K_1}{c},
    \end{split}
  \end{equation*}
  where $K_1=K_1(z,s)$ is a constant; for the restriction onto $H^{3/2}(\Sigma; \mathbb{C})$ viewed as a mapping into $H^{-1/2}(\Sigma; \mathbb{C})$ this estimate yields
  \begin{equation}\label{oderso}
   \left\| \left(a_-+\frac{z}{c}\cS_{z+z^2/c^2}\right)^{-1} - a_-^{-1} \right\|_{H^{3/2}(\Sigma; \mathbb{C}) \to H^{-1/2}(\Sigma; \mathbb{C})}\leq\frac{K_1}{c},
  \end{equation}
and also shows that $(a_-+\frac{z}{c}\cS_{z+z^2/c^2})^{-1}$ is 
  uniformly bounded in $H^s(\Sigma; \mathbb{C}^2)$, $s \in [-\frac{1}{2}, \frac{1}{2}]$, for $c>0$ sufficiently large; cf. \textit{Step~1} in the proof of Proposition~\ref{proposition_sconverge}. Together with Proposition~\ref{p-tweakderiv} this implies with a constant $K_2=K_2(z)$ that
  \begin{equation} \label{estimate_F_1_3}
    \| F_1(c) - I_4 \|_{H^{-1/2}(\Sigma; \mathbb{C}^4) \to H^{-1/2}(\Sigma; \mathbb{C}^4)} \leq \frac{K_2}{\sqrt{c}} \quad \text{and} \quad 
    \| F_3(c) - I_4 \|_{H^{1/2}(\Sigma; \mathbb{C}^4) \to H^{1/2}(\Sigma; \mathbb{C}^4)} \leq \frac{K_2}{\sqrt{c}}.
  \end{equation}
  In particular, $F_1(c)$ is uniformly bounded in $H^{-1/2}(\Sigma; \mathbb{C}^4)$ and $F_3(c)$ is uniformly bounded in $H^{1/2}(\Sigma; \mathbb{C}^4)$ in $c$,
  and the restrictions onto $H^{1/2}(\Sigma; \mathbb{C}^4)$ and $H^{3/2}(\Sigma; \mathbb{C}^4)$ satisfy the same bounds 
  \begin{equation} \label{estimate_F_1_3++}
    \| F_1(c) - I_4 \|_{H^{1/2}(\Sigma; \mathbb{C}^4) \to H^{-1/2}(\Sigma; \mathbb{C}^4)} \leq \frac{K_2}{\sqrt{c}} \quad \text{and} \quad 
    \| F_3(c) - I_4 \|_{H^{3/2}(\Sigma; \mathbb{C}^4) \to H^{1/2}(\Sigma; \mathbb{C}^4)} \leq \frac{K_2}{\sqrt{c}}.
  \end{equation}
  Moreover, Proposition~\ref{proposition_sconverge} and \eqref{oderso} imply
  %, \eqref{convergence_Neumann}, and the fact that $H^{s}(\Sigma; \mathbb{C}^4)$ is boundedly embedded in $H^{t}(\Sigma; \mathbb{C}^4)$ for $s > t$ we get with a constant $K_3 > 0$ that
  \begin{equation*}%\label{naja}
    \left\| F_2(c) - \begin{pmatrix} \mathcal{S}_z^{-1} I_2 & 0 \\ 0 & a_-^{-1} I_2 \end{pmatrix} \right\|_{H^{3/2}(\Sigma; \mathbb{C}^4) \rightarrow H^{-1/2}(\Sigma; \mathbb{C}^4)} \leq \frac{K_3}{c}
  \end{equation*}
  with some constant $K_3=K_3(z)$.
  Eventually, it follows from Proposition~\ref{proposition_sconverge} and the uniform boundedness of $(a_-+\frac{z}{c}\cS_{z+z^2/c^2})^{-1}$ as a mapping from
  $H^{1/2}(\Sigma; \mathbb{C})$ to $H^{-1/2}(\Sigma; \mathbb{C})$
  that $F_2(c): H^{1/2}(\Sigma; \mathbb{C}^4) \rightarrow H^{-1/2}(\Sigma; \mathbb{C}^4)$ is uniformly bounded.
  Combining this with~\eqref{estimate_F_1_3} and \eqref{estimate_F_1_3++} gives
  \begin{equation*}
    \begin{split}
      &\left\| \big( \vartheta_c + \mathcal{M}_c \mathcal{C}_{z + c^2/2} \mathcal{M}_c \big)^{-1} - \begin{pmatrix} \mathcal{S}_z^{-1} & 0 \\ 0 & a_-^{-1} I_2 \end{pmatrix} \right\|_{H^{3/2}(\Sigma; \mathbb{C}^4) \rightarrow H^{-1/2}(\Sigma; \mathbb{C}^4)} \\
      & \qquad\leq  \big\| F_1(c) F_2(c) (F_3(c) - I_4) \big\|_{H^{3/2}(\Sigma; \mathbb{C}^4) \rightarrow H^{-1/2}(\Sigma; \mathbb{C}^4)} \\
      &\qquad \quad + \left\| F_1(c) \left( F_2(c) - \begin{pmatrix} \mathcal{S}_z^{-1} I_2 & 0 \\ 0 & a_-^{-1} I_2 \end{pmatrix} \right) \right\|_{H^{3/2}(\Sigma; \mathbb{C}^4) \rightarrow H^{-1/2}(\Sigma; \mathbb{C}^4)} \\
      &\qquad \quad + \left\| (F_1(c) - I_4) \begin{pmatrix} \mathcal{S}_z^{-1} I_2 & 0 \\ 0 & a_-^{-1} I_2 \end{pmatrix} \right\|_{H^{3/2}(\Sigma; \mathbb{C}^4) \rightarrow H^{-1/2}(\Sigma; \mathbb{C}^4)} \\
      & \qquad\leq  \big\| F_1(c) F_2(c) \big\|_{H^{1/2}(\Sigma; \mathbb{C}^4) \rightarrow H^{-1/2}(\Sigma; \mathbb{C}^4)} \big\| F_3(c) - I_4 \big\|_{H^{3/2}(\Sigma; \mathbb{C}^4) \rightarrow H^{1/2}(\Sigma; \mathbb{C}^4)} \\
      &\qquad \quad + \left\| F_1(c) \right\|_{H^{-1/2}(\Sigma; \mathbb{C}^4) \rightarrow H^{-1/2}(\Sigma; \mathbb{C}^4)} \left\|  F_2(c) - \begin{pmatrix} \mathcal{S}_z^{-1} I_2 & 0 \\ 0 & a_-^{-1} I_2 \end{pmatrix}  \right\|_{H^{3/2}(\Sigma; \mathbb{C}^4) \rightarrow H^{-1/2}(\Sigma; \mathbb{C}^4)} \\
      &\qquad \quad + \left\| F_1(c) - I_4 \right\|_{H^{1/2}(\Sigma; \mathbb{C}^4) \rightarrow H^{-1/2}(\Sigma; \mathbb{C}^4)} \left\| \begin{pmatrix} \mathcal{S}_z^{-1} I_2 & 0 \\ 0 & a_-^{-1} I_2 \end{pmatrix} \right\|_{H^{3/2}(\Sigma; \mathbb{C}^4) \rightarrow H^{1/2}(\Sigma; \mathbb{C}^4)} \\
      &\qquad \leq \frac{K(z)}{\sqrt{c}},
    \end{split}
  \end{equation*}
  which is exactly the claimed convergence result.
  
  Finally, the claim about the uniform boundedness of the operator $( \vartheta_c + \mathcal{M}_c \mathcal{C}_{z + c^2/2} \mathcal{M}_c )^{-1}: H^{1/2}(\Sigma; \mathbb{C}^4) \to H^{-1/2}(\Sigma; \mathbb{C}^4)$ follows from~\eqref{e-inverted} and the above observations on the uniform boundedness of $F_1(c)$ in $H^{-1/2}(\Sigma; \mathbb{C}^4)$, $F_2(c)$ from 
  $H^{1/2}(\Sigma; \mathbb{C}^4)$ to $H^{-1/2}(\Sigma; \mathbb{C}^4)$, and 
  $F_3(c)$ in $H^{1/2}(\Sigma; \mathbb{C}^4)$.
\end{proof}

\subsection{Nonrelativistic limit of $A_\kappa^\Sigma$}\label{abc}

With the preparations from the previous sections we are now ready to 
discuss the nonrelativistic limit of the Dirac operators $A_\kappa^\Sigma$. 

\begin{prop}\label{proposition_confinement_nr_limit}
Let $A_\kappa^\Sigma$, $\kappa\in\mathbb R$, be as in \eqref{def_delta_op} and $-\Delta_D := (-\Delta_D^{\Omega_+}) \oplus (-\Delta_D^{\Omega_-})$, where $-\Delta_D^{\Omega_\pm}$ is the Dirichlet Laplacian in $\Omega_\pm$ from \eqref{def_Dirichlet_op}.
Let $z <0$ and $c > \sqrt{|z|}$. Then, there exists a constant $K(z)$ such that 
for all $c$ sufficiently large
\begin{align*}
\left\|\left(A_\kappa^\Sigma-\left(z+\frac{c^2}{2}\right)\right)^{-1}-(-\Delta_D-z)^{-1}\begin{pmatrix} I_2 & 0 \\ 0 & 0 \end{pmatrix} \right\|_{L^2(\RR^3; \mathbb{C}^4)\to L^2(\RR^3; \mathbb{C}^4)} \leq \frac{K(z)}{\sqrt{c}}.
\end{align*}
\end{prop}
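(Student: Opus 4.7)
The plan is to start from the Krein-type resolvent formula \eqref{krein_rescaled} for $A_\kappa^\Sigma$ and compare it, term by term, with the analogous formula from Lemma~\ref{lemma_Dirichlet_resolvent} for $-\Delta_D = (-\Delta_D^{\Omega_+})\oplus(-\Delta_D^{\Omega_-})$. The free resolvent piece is controlled by Proposition~\ref{p-freeconvergence}, which gives rate $\mathcal{O}(c^{-1})$ in the operator norm $L^2(\mathbb{R}^3;\mathbb{C}^4)\to L^2(\mathbb{R}^3;\mathbb{C}^4)$. It remains to analyze the triple product
$$
T_1(c) T_2(c) T_3(c) := \Phi_{z+c^2/2}\mathcal{M}_c\, \bigl(\vartheta_c + \mathcal{M}_c\mathcal{C}_{z+c^2/2}\mathcal{M}_c\bigr)^{-1}\, \mathcal{M}_c\Phi^*_{\overline{z}+c^2/2}.
$$
The natural candidate limits are
$$
T_1^\infty = SL_z\begin{pmatrix}I_2 & 0 \\ 0 & 0\end{pmatrix},\qquad T_2^\infty = \begin{pmatrix}\mathcal{S}_z^{-1}I_2 & 0 \\ 0 & a_-^{-1}I_2\end{pmatrix},\qquad T_3^\infty = SL_{\overline{z}}^*\begin{pmatrix}I_2 & 0 \\ 0 & 0\end{pmatrix},
$$
supplied by Propositions~\ref{p-allconvergence} and~\ref{proposition_convergence_inverse}. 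A direct matrix computation using $\begin{pmatrix}I_2 & 0 \\ 0 & 0\end{pmatrix}\begin{pmatrix}\mathcal{S}_z^{-1}I_2 & 0 \\ 0 & a_-^{-1}I_2\end{pmatrix}\begin{pmatrix}I_2 & 0 \\ 0 & 0\end{pmatrix} = \begin{pmatrix}\mathcal{S}_z^{-1}I_2 & 0 \\ 0 & 0\end{pmatrix}$ together with Lemma~\ref{lemma_Dirichlet_resolvent} shows that $T_1^\infty T_2^\infty T_3^\infty = SL_z\mathcal{S}_z^{-1} SL_{\overline{z}}^*\begin{pmatrix}I_2 & 0 \\ 0 & 0\end{pmatrix}$, which is precisely the perturbation piece of $(-\Delta_D - z)^{-1}\begin{pmatrix}I_2 & 0 \\ 0 & 0\end{pmatrix}$.

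The difference $T_1 T_2 T_3 - T_1^\infty T_2^\infty T_3^\infty$ is handled via the telescoping identity
$$
T_1 T_2 T_3 - T_1^\infty T_2^\infty T_3^\infty = (T_1 - T_1^\infty) T_2 T_3 + T_1^\infty (T_2 - T_2^\infty) T_3 + T_1^\infty T_2^\infty (T_3 - T_3^\infty).
$$
The first summand is bounded by $K(z)c^{-1/2}$ since $(T_1-T_1^\infty)\colon H^{-1/2}(\Sigma;\mathbb{C}^4)\to L^2(\mathbb{R}^3;\mathbb{C}^4)$ has this rate by Proposition~\ref{p-allconvergence} and $T_2 T_3\colon L^2(\mathbb{R}^3;\mathbb{C}^4)\to H^{-1/2}(\Sigma;\mathbb{C}^4)$ is uniformly bounded in $c$ (from the uniform bounds $T_3\colon L^2\to H^{1/2}$ in Proposition~\ref{p-allconvergence} and $T_2\colon H^{1/2}\to H^{-1/2}$ in Proposition~\ref{proposition_convergence_inverse}). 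The third summand is bounded analogously, as $T_1^\infty\colon H^{-1/2}\to L^2$ is bounded, $T_2^\infty\colon H^{1/2}\to H^{-1/2}$ is bounded by \eqref{mapping_properties_S_mu}, and $(T_3-T_3^\infty)\colon L^2\to H^{1/2}$ has rate $c^{-1/2}$.

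The main obstacle is the middle summand: Proposition~\ref{proposition_convergence_inverse} only provides the rate $c^{-1/2}$ for $T_2 - T_2^\infty$ as a map $H^{3/2}(\Sigma;\mathbb{C}^4)\to H^{-1/2}(\Sigma;\mathbb{C}^4)$, whereas $T_3(c)$ a priori only produces functions in $H^{1/2}(\Sigma;\mathbb{C}^4)$. To circumvent this, the plan is to split
$$
T_1^\infty (T_2 - T_2^\infty) T_3 = T_1^\infty (T_2 - T_2^\infty)(T_3 - T_3^\infty) + T_1^\infty (T_2 - T_2^\infty) T_3^\infty.
$$
On the second piece, the limit operator $T_3^\infty = SL_{\overline{z}}^*\begin{pmatrix}I_2 & 0 \\ 0 & 0\end{pmatrix}$ maps $L^2(\mathbb{R}^3;\mathbb{C}^4)$ boundedly into $H^{3/2}(\Sigma;\mathbb{C}^4)$ by \eqref{mapping_properties_SL_mu_star}, so one may invoke the $H^{3/2}\to H^{-1/2}$ bound for $T_2-T_2^\infty$ from Proposition~\ref{proposition_convergence_inverse} and obtain the rate $c^{-1/2}$. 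On the first piece, $T_2-T_2^\infty$ is uniformly bounded as a map $H^{1/2}\to H^{-1/2}$ (both $T_2$ and $T_2^\infty$ are uniformly bounded there), and $(T_3-T_3^\infty)\colon L^2\to H^{1/2}$ carries the rate $c^{-1/2}$ by Proposition~\ref{p-allconvergence}. Combining the three contributions of the telescoping with the free-resolvent bound from Proposition~\ref{p-freeconvergence} yields the stated operator-norm estimate with a constant $K(z)$ depending only on $z$.
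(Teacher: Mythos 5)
Your proof is correct and follows essentially the same strategy as the paper: start from the Krein formula \eqref{krein_rescaled} and Lemma~\ref{lemma_Dirichlet_resolvent}, handle the free resolvent difference by Proposition~\ref{p-freeconvergence}, and telescope the triple product using the convergence and uniform-boundedness results of Propositions~\ref{p-allconvergence} and~\ref{proposition_convergence_inverse}. The only difference is in the bookkeeping of the telescoping: the paper writes the difference as
\begin{equation*}
T_1 T_2 (T_3 - T_3^\infty) \;+\; T_1 (T_2 - T_2^\infty) T_3^\infty \;+\; (T_1 - T_1^\infty) T_2^\infty T_3^\infty,
\end{equation*}
so that the middle term already has $T_3^\infty = SL_{z}^*(\begin{smallmatrix}I_2&0\\0&0\end{smallmatrix}):L^2\to H^{3/2}$ as the right factor, which lets the $H^{3/2}\to H^{-1/2}$ rate for $T_2-T_2^\infty$ apply directly and avoids the obstacle you identified. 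Your symmetric telescoping $(T_1-T_1^\infty)T_2T_3 + T_1^\infty(T_2-T_2^\infty)T_3 + T_1^\infty T_2^\infty(T_3-T_3^\infty)$ puts the actual operator $T_3$ (which only lands in $H^{1/2}$) next to $T_2-T_2^\infty$, forcing your additional split of the middle term into $T_1^\infty(T_2-T_2^\infty)(T_3-T_3^\infty) + T_1^\infty(T_2-T_2^\infty)T_3^\infty$. This is perfectly sound — the needed uniform bound for $T_2-T_2^\infty:H^{1/2}\to H^{-1/2}$ does hold — and yields the same $\mathcal{O}(c^{-1/2})$ rate; it simply costs one extra term compared to the paper's arrangement.
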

\begin{proof}
  Let $\mathcal{M}_c$ and $\vartheta_c$ be defined by~\eqref{def_M_c}. As $-c^2<z<0$, one has $z \in \rho(-\Delta_D)$ and $z + \frac{c^2}{2} \in (-\frac{c^2}{2}, \frac{c^2}{2}) \subset \rho(A_\kappa^\Sigma)$; cf Proposition~\ref{proposition_delta_op}. Furthermore, from Proposition~\ref{proposition_delta_op} and Lemma~\ref{lemma_Dirichlet_resolvent}  we obtain
  \begin{equation} \label{resolvent_difference}
    \begin{split}
      &\left(A_\kappa^\Sigma-\left(z+\frac{c^2}{2}\right)\right)^{-1}-(-\Delta_D-z)^{-1}\begin{pmatrix} I_2 & 0 \\ 0 & 0 \end{pmatrix} \\
      &\qquad= \left(A_0 -\left(z+\frac{c^2}{2}\right)\right)^{-1} - \Phi_{z + c^2/2} \mathcal{M}_c \big( \vartheta_c + \mathcal{M}_c \mathcal{C}_{z + c^2/2} \mathcal{M}_c \big)^{-1} \mathcal{M}_c \Phi_{z + c^2/2}^* \\
      &\qquad \qquad - \left( (-\Delta - z)^{-1} -  SL_z \mathcal{S}_z^{-1} SL_{z}^* \right) \begin{pmatrix} I_2 & 0 \\ 0 & 0 \end{pmatrix} \\
      &\qquad = D_1(c) + D_2(c) + D_3(c) + D_4(c)
    \end{split}
  \end{equation}
  with
  \begin{equation*}
    \begin{split}
      D_1(c) &:= \left(A_0 -\left(z+\frac{c^2}{2}\right)\right)^{-1} - (-\Delta - z)^{-1} \begin{pmatrix} I_2 & 0 \\ 0 & 0 \end{pmatrix}, \\
      D_2(c) &:= - \Phi_{z + c^2/2} \mathcal{M}_c \big( \vartheta_c + \mathcal{M}_c \mathcal{C}_{z + c^2/2} \mathcal{M}_c \big)^{-1} \left( \mathcal{M}_c \Phi_{z + c^2/2}^* - SL_{z}^* \begin{pmatrix} I_2 & 0 \\ 0 & 0 \end{pmatrix} \right), \\
      D_3(c) &:= - \Phi_{z + c^2/2} \mathcal{M}_c \left( \big( \vartheta_c + \mathcal{M}_c \mathcal{C}_{z + c^2/2} \mathcal{M}_c \big)^{-1} - \begin{pmatrix} \mathcal{S}_z^{-1} & 0 \\ 0 & a_-^{-1} I_2 \end{pmatrix} \right)  SL_{z}^* \begin{pmatrix} I_2 & 0 \\ 0 & 0 \end{pmatrix}, \\
      D_4(c) &:= - \left( \Phi_{z + c^2/2} \mathcal{M}_c - SL_{z} \begin{pmatrix} I_2 & 0 \\ 0 & 0 \end{pmatrix} \right) \begin{pmatrix} \mathcal{S}_z^{-1} & 0 \\ 0 & a_-^{-1} I_2 \end{pmatrix}  SL_{z}^* \begin{pmatrix} I_2 & 0 \\ 0 & 0 \end{pmatrix}.
    \end{split}
  \end{equation*}
  First, it follows from Proposition~\ref{p-freeconvergence} that $\| D_1(c) \|_{L^2(\mathbb{R}^3; \mathbb{C}^4) \rightarrow L^2(\mathbb{R}^3; \mathbb{C}^4)} \leq \frac{K_1}{c}$ for a constant $K_1=K_1(z)$.
  To discuss $D_2(c)$ recall that $\Phi_{z + c^2/2} \mathcal{M}_c: H^{-1/2}(\Sigma; \mathbb{C}^4) \rightarrow L^2(\mathbb{R}^3; \mathbb{C}^4)$ is uniformly bounded in $c$ by Proposition~\ref{p-allconvergence} and 
  $( \vartheta_c + \mathcal{M}_c \mathcal{C}_{z + c^2/2} \mathcal{M}_c )^{-1}: H^{1/2}(\Sigma; \mathbb{C}^4) \rightarrow H^{-1/2}(\Sigma; \mathbb{C}^4)$ is uniformly bounded in $c$ by Proposition~\ref{proposition_convergence_inverse}. Hence, we find with Proposition~\ref{p-allconvergence} that there exists a constant $K_2 =K_2(z)$ such that
  \begin{equation*} 
    \begin{split}
      \| D_2(c) \|_{L^2(\mathbb{R}^3; \mathbb{C}^4) \rightarrow L^2(\mathbb{R}^3; \mathbb{C}^4)} &\leq \big\| \Phi_{z + c^2/2} \mathcal{M}_c \big\|_{H^{-1/2}(\Sigma; \mathbb{C}^4) \rightarrow L^2(\mathbb{R}^3; \mathbb{C}^4)} \\
      & \cdot \Big\| \big( \vartheta_c + \mathcal{M}_c \mathcal{C}_{z + c^2/2} \mathcal{M}_c \big)^{-1} \Big\|_{H^{1/2}(\Sigma; \mathbb{C}^4) \rightarrow H^{-1/2}(\Sigma; \mathbb{C}^4)} \\
      & \cdot\left\| \mathcal{M}_c \Phi_{z + c^2/2}^* - SL_{z}^* \begin{pmatrix} I_2 & 0 \\ 0 & 0 \end{pmatrix} \right\|_{L^2(\mathbb{R}^3; \mathbb{C}^4) \rightarrow H^{1/2}(\Sigma; \mathbb{C}^4)} \leq \frac{K_2}{\sqrt{c}}.
    \end{split}
  \end{equation*}
  Next, as $SL_{z}^*: L^2(\mathbb{R}^3; \mathbb{C}) \rightarrow H^{3/2}(\Sigma; \mathbb{C})$ is bounded (see \eqref{mapping_properties_SL_mu_star}), Proposition~\ref{proposition_convergence_inverse} implies that there exists a constant 
  $K_3 =K_3(z)$ such that
  \begin{equation*}
    \begin{split}
      \| D_3(c) &\|_{L^2(\mathbb{R}^3; \mathbb{C}^4) \rightarrow L^2(\mathbb{R}^3; \mathbb{C}^4)} \leq \big\| \Phi_{z + c^2/2} \mathcal{M}_c \big\|_{H^{-1/2}(\Sigma; \mathbb{C}^4) \rightarrow L^2(\mathbb{R}^3; \mathbb{C}^4)} \\
      &\qquad \qquad \cdot \left\| \big( \vartheta_c + \mathcal{M}_c \mathcal{C}_{z + c^2/2} \mathcal{M}_c \big)^{-1} - \begin{pmatrix} \mathcal{S}_z^{-1} & 0 \\ 0 & a_-^{-1} I_2 \end{pmatrix} \right\|_{H^{3/2}(\Sigma; \mathbb{C}^4) \rightarrow H^{-1/2}(\Sigma; \mathbb{C}^4)} \\
      &\qquad \qquad \cdot\left\| SL_{z}^* \begin{pmatrix} I_2 & 0 \\ 0 & 0 \end{pmatrix} \right\|_{L^2(\mathbb{R}^3; \mathbb{C}^4) \rightarrow H^{3/2}(\Sigma; \mathbb{C}^4)} \leq \frac{K_3}{\sqrt{c}}.
    \end{split}
  \end{equation*}
  In a similar way, as $\mathcal{S}_z^{-1}: H^{1/2}(\Sigma; \mathbb{C}) \rightarrow H^{-1/2}(\Sigma; \mathbb{C})$ is bounded (see~\eqref{mapping_properties_S_mu}), we find with Proposition~\ref{p-allconvergence} that there exists a constant $K_4 =K_4(z)$ such that
  \begin{equation*} 
    \begin{split}
      \| D_4(c) \|_{L^2(\mathbb{R}^3; \mathbb{C}^4) \rightarrow L^2(\mathbb{R}^3; \mathbb{C}^4)} \leq &\left\| \Phi_{z + c^2/2} \mathcal{M}_c - SL_{z} \begin{pmatrix} I_2 & 0 \\ 0 & 0 \end{pmatrix} \right\|_{H^{-1/2}(\Sigma; \mathbb{C}^4) \rightarrow L^2(\mathbb{R}^3; \mathbb{C}^4)} \\
      & \cdot \left\| \begin{pmatrix} \mathcal{S}_z^{-1} & 0 \\ 0 & a_-^{-1} I_2 \end{pmatrix} \right\|_{H^{1/2}(\Sigma; \mathbb{C}^4) \rightarrow H^{-1/2}(\Sigma; \mathbb{C}^4)} \\
      & \cdot\left\| SL_{z}^* \begin{pmatrix} I_2 & 0 \\ 0 & 0 \end{pmatrix} \right\|_{L^2(\mathbb{R}^3; \mathbb{C}^4) \rightarrow H^{1/2}(\Sigma; \mathbb{C}^4)} \leq \frac{K_4}{\sqrt{c}}.
    \end{split}
  \end{equation*}
  Now the statement of the proposition follows by combining
 the above estimates for the operators  $D_1(c), D_2(c), D_3(c)$, and $D_4(c)$ with~\eqref{resolvent_difference}.
\end{proof}

\subsection{Proof of Theorem~\ref{theorem_nr_limit} and Corollary~\ref{cor_Faber_Krahn}} \label{section_proofs_theorems}

In this section we complete the proof of our main result by combining
Lemma~\ref{lemma_confinement} and Proposition~\ref{proposition_confinement_nr_limit}. 
For this let $\kappa\in\mathbb R$, $\Omega, \Omega_\pm$ be as in Hypothesis~\ref{hypothesis_Omega}, and $\Sigma = \partial \Omega$. Let the Dirac operator $A_\kappa^{\Sigma}$ be defined as in \eqref{def_delta_op} and denote by $P_{\Omega}: L^2(\mathbb{R}^3; \mathbb{C}^4) \rightarrow L^2(\Omega; \mathbb{C}^4)$ the operator  $P_{\Omega} f = f \upharpoonright \Omega$; cf. Corollary~\ref{corollary_H_tau_properties}. Then, the self-adjoint Dirac operator $H_\kappa^{\Omega}$ in $L^2(\Omega; \mathbb{C}^4)$ satisfies  $$H_\kappa^{\Omega} = P_{\Omega} A_\kappa^{\Sigma} P_{\Omega}^*$$ by Lemma~\ref{lemma_confinement} and 
since $\sigma(H_\kappa^{\Omega})\subset (-\infty,-\tfrac{c^2}{2}]\cup[\tfrac{c^2}{2},\infty)$ by Corollary~\ref{corollary_H_tau_properties} 
it is clear that $z+c^2/2$ with $z\in\mathbb C\setminus[0,\infty)$ and $c>\sqrt{\vert z\vert}$ belongs to 
$\rho(H_\kappa^{\Omega})\cap\rho( A_\kappa^{\Sigma})$, so that
$$\left(H_\kappa^{\Omega} - \left(z + \frac{c^2}{2}\right) \right)^{-1} = P_{\Omega} \left(A_\kappa^{\Sigma} - \left(z + \frac{c^2}{2}\right) \right)^{-1} P_{\Omega}^*.$$
Therefore, if $z <0$, then \eqref{equation_nr_limit} follows from Proposition~\ref{proposition_confinement_nr_limit}.
In the general case $z\in\mathbb C\setminus[0,\infty)$ we obtain \eqref{equation_nr_limit} by assuming that $c > 1$ and using the identity
  \begin{equation*}
    \begin{split}
      &\left(H_\kappa^{\Omega} - \left(z + \frac{c^2}{2} \right) \right)^{-1} - (-\Delta_D^{\Omega} - z)^{-1} \begin{pmatrix} I_2 & 0 \\ 0 & 0 \end{pmatrix} = \left( I_4 + (z+1) (-\Delta_D^{\Omega} - z)^{-1} \begin{pmatrix} I_2 & 0 \\ 0 & 0 \end{pmatrix} \right) \\
      &~ \cdot \left[ \left(H_\kappa^{\Omega} - \! \left(-1 + \frac{c^2}{2}\right)  \right)^{-1}\! - (-\Delta_D^{\Omega} + 1)^{-1} \begin{pmatrix} I_2 & 0 \\ 0 & 0 \end{pmatrix} \right] \left( I_4 + (z+1) \left(H_\kappa^{\Omega} - \left(z + \frac{c^2}{2} \right) \right)^{-1} \right).
    \end{split}
  \end{equation*}

  This completes the proof of Theorem~\ref{theorem_nr_limit} and now we turn our attention to Corollary~\ref{cor_Faber_Krahn},
  which can be viewed as an immediate consequence of classical results on eigenvalues of Dirichlet Laplacians and 
  convergence of spectra under operator norm convergence of resolvents. For the convenience of the reader and to keep the presentation self-contained we briefly
  provide the details of the arguments.
  In the present situation, it is convenient to apply \cite[Satz~3.17~d)]{W00} or \cite[Theorem 2.3.1]{H06} about the convergence of eigenvalues of nonnegative compact operators. More precisely, let $B_c$, $c \in (c_0, \infty]$ for a suitable $c_0 \in \mathbb{R}$, be a family of compact, self-adjoint, and nonnegative operators with eigenvalues $\mu_1(B_c) \geq \mu_2(B_c) \geq \dots \geq 0$ taking multiplicities into account. If $B_c$ converges to $B_\infty$ in the operator norm, as $c \rightarrow \infty$, then by \cite[Satz~3.17~d)]{W00} for all $j \in \mathbb{N}$ also $\mu_j(B_c)$ converges to $\mu_j(B_\infty)$, as $c \rightarrow \infty$.
  We apply this result to 
  \begin{equation*}
    B_c := f\left(\left(H_\kappa^{\Omega} - \bigg(-1 + \frac{c^2}{2}\right)\right)^{-1}\bigg) \quad \text{and} \quad 
    B_\infty := f\left((-\Delta_D^{\Omega} +1)^{-1}\begin{pmatrix}I_2 & 0\\0&0 \end{pmatrix}\right),
  \end{equation*}
  where $f \in C(\R)$ is a nonnegative function such that $f(x) = 0$ for $x \in (-\infty,0]\cup [2,\infty)$, and $f(x) = x$ for $x \in (0,1]$. 
  It is clear that $B_c$ and $B_\infty$ are nonnegative bounded self-adjoint operators, and from the compactness of the resolvents of $H_\kappa^{\Omega}$ and $-\Delta_D^{\Omega}$, which holds as $\dom H_\kappa^{\Omega}$ and $\dom (-\Delta_D^{\Omega})$ are compactly embedded in $L^2(\Omega; \mathbb{C}^4)$ and $L^2(\Omega; \mathbb{C})$, respectively,
  it follows that $B_c$ and $B_\infty$ are both compact.
  
  In the following let $c>1$. For the eigenvalues $\lambda^\pm_j(H_\kappa^\Omega)$ of $H_\kappa^\Omega$ ordered as in \eqref{evs} we have
  \begin{equation*}
   \left(\lambda_j^-(H_\kappa^\Omega) +1 - \frac{c^2}{2} \right)^{-1} < 0\quad\text{and}\quad \left(\lambda_j^+(H_\kappa^\Omega) +1 - \frac{c^2}{2} \right)^{-1} \leq 1,\qquad j \in \mathbb{N};
  \end{equation*}
   cf. Corollary~\ref{corollary_H_tau_properties}.
   From the choice of $f$ it is clear that the positive eigenvalues 
   of $B_c$ are given by $\mu_j(B_c)=(\lambda_j^+(H_\kappa^\Omega) +1 - \frac{c^2}{2})^{-1}$. Similarly, $\sigma(-\Delta_D^\Omega) \subset [0, \infty)$ and the fact that two copies of 
   $(-\Delta_D^\Omega + 1)^{-1}$ appear in the definition of $B_\infty$ leads to 
   $\mu_{2j-1}(B_\infty) = \mu_{2j}(B_\infty) = (\lambda_j(-\Delta_D^\Omega) + 1)^{-1}$ for $j \in \mathbb{N}$, where
   $0 < \lambda_1(-\Delta_D^\Omega) \leq \lambda_2(-\Delta_D^\Omega) \leq \dots$ denote the discrete eigenvalues of $-\Delta_D^\Omega$ taking multiplicities into account.
   Now it follows from Theorem~\ref{theorem_nr_limit} and \cite[Theorem VIII.20]{RS72} that $B_c$ converges to $B_\infty$ in the operator norm, as $c \to \infty$. Using that all eigenvalues of $H_\kappa^\Omega$ have even multiplicity, see Corollary~\ref{corollary_H_tau_properties}~(i), we conclude with \cite[Satz~3.17~d)]{W00} from the above considerations that for any $j \in \mathbb{N}$
  \begin{equation*} 
    \mu_{2j-1}(B_c) = \mu_{2j}(B_c) = \left(\lambda_{2j-1}^+(H_\kappa^\Omega) - \frac{c^2}{2} + 1\right)^{-1}=\left(\lambda_{2j}^+(H_\kappa^\Omega) - \frac{c^2}{2} + 1\right)^{-1} 
  \end{equation*}
  tends to
  \begin{equation*} 
     \mu_{2j-1}(B_\infty) = \mu_{2j}(B_\infty) = \big(\lambda_j(-\Delta_D^\Omega) + 1\big)^{-1},\quad \text{as}\,\,c\rightarrow\infty,
  \end{equation*}
  which is equivalent to
   \begin{equation}\label{convergence_eigenvalues} 
 	\lambda_{2j -1 }^+(H_\kappa^{\Omega}) - \frac{c^2}{2} = \lambda_{2j}^+(H_\kappa^{\Omega}) - \frac{c^2}{2} \rightarrow \lambda_{j}(-\Delta_D^\Omega), \quad \text{as } c \rightarrow \infty.
 \end{equation}
  
  Eventually, to conclude  Corollary~\ref{cor_Faber_Krahn} we note that~\eqref{convergence_eigenvalues} remains true if $\Omega$ is replaced by a ball $B$ or the disjoint union of two balls $B_1 \cup B_2$. Thus, the claims follow immediately from the classical results for the Dirichlet Laplacian, which under the assumptions of Corollary~\ref{cor_Faber_Krahn} read as follows:
  \begin{itemize}
  	\item[(i)] Faber-Krahn inequality: $\lambda_{1}(-\Delta_D^B) \leq \lambda_{1}(-\Delta_D^\Omega)$  and equality holds if and only if $\Omega$ is a ball, see \cite{F23, K25} and also \cite[Theorem~3.2.1 and Remark~3.2.2]{H06}.
  	\item[(ii)] Hong-Krahn-Szegö inequality: $\lambda_{2}(-\Delta_D^{B_1 \cup B_2})\leq \lambda_{2}(-\Delta_D^\Omega)$ and equality holds if and only if $\Omega$ is the  union of two identical disjoint balls, see \cite{H54, K26} and also \cite[Theorem~4.1.1 and Remark~4.1.2]{H06}.
  	\item[(iii)] Payne-P\'{o}lya-Weinberger inequality: If $\Omega$ is connected, then 
  	\begin{equation*}
  		\frac{\lambda_{1}(-\Delta_D^{B})}{\lambda_{2}(-\Delta_D^{B})} \leq \frac{\lambda_{1}(-\Delta_D^{\Omega})}{\lambda_{2}(-\Delta_D^{\Omega})} 
  	\end{equation*}
  	and  equality holds if and only if $\Omega$ is a ball, see \cite{AB92, P55}.
  \end{itemize}

\begin{rem}\label{laterrem}
Finally, let us remark that from Theorem~\ref{theorem_nr_limit} and Corollary~\ref{cor_Faber_Krahn} one gets information about the positive part of the spectrum of $H_\kappa^{\Omega}$. Similar statements are also true for the negative part of the spectrum of $H_\kappa^{\Omega}$. Indeed, consider the self-adjoint unitary matrix $U = \big( \begin{smallmatrix} 0 & -i I_2 \\ i I_2 & 0 \end{smallmatrix} \big)$. Then, as $U \alpha_j + \alpha_j U = 0$, $j \in \{ 1, 2, 3 \}$, and  $U \beta + \beta U = 0$, it is not difficult to see that $
H_\kappa^{\Omega} = -U H_{-\kappa}^{\Omega} U$, i.e. $H_\kappa^{\Omega}$ and $-H_{-\kappa}^{\Omega}$ are unitarily equivalent. Hence, it follows from Theorem~\ref{theorem_nr_limit} that 
for $z \in \mathbb{C} \setminus (-\infty, 0]$ also
\begin{equation*}
  \begin{split}
    \left( H_\kappa^{\Omega} - \left(z - \frac{c^2}{2}\right) \right)^{-1} &= \left( -U H_{-\kappa}^{\Omega} U - \left(z - \frac{c^2}{2}\right) \right)^{-1} = -U \left(H_{-\kappa}^{\Omega} - \left(-z + \frac{c^2}{2}\right) \right)^{-1} U \\
    &\rightarrow -U (-\Delta_D^{\Omega} + z)^{-1} \begin{pmatrix} I_2 & 0 \\ 0 & 0 \end{pmatrix} U=  \big(-(-\Delta_D^{\Omega}) - z\big)^{-1} \begin{pmatrix} 0 & 0 \\ 0 & I_2 \end{pmatrix} 
  \end{split}
\end{equation*}
in the operator norm, as $c \to \infty$. This convergence is of interest by its own, but similarly as in the proof of Corollary~\ref{cor_Faber_Krahn} one can conclude 
spectral inequalities for the negative eigenvalues $\lambda_{j}^-(H_\kappa^{\Omega})$ of $H_\kappa^{\Omega}$; alternatively one can argue via the unitary equivalence $H_\kappa^{\Omega} = -U H_{-\kappa}^{\Omega} U$. More precisely, 
for a bounded $C^2$-domain $\Omega \subset \mathbb{R}^3$, a ball $B \subset \mathbb{R}^3$ with $\vert B\vert=\vert\Omega\vert$, and two identical and disjoint balls $B_1,B_2 \subset \R^3$ with $\vert B_1\vert + \vert B_2\vert=\vert\Omega\vert$ the following assertions follow for sufficiently large $c>0$:
  \begin{itemize}
  	\item[(i)] $\lambda_{j}^-(H_\kappa^{B}) \geq \lambda_{j}^-(H_\kappa^{\Omega})$ for $j \in \{1,2\}$ and equality holds if and only if $\Omega$ is a ball.
  	\item[(ii)] $\lambda_{j}^-(H_\kappa^{B_1 \cup B_2}) \geq \lambda_{j}^-(H_\kappa^{\Omega})$ for $j \in \{3,4\}$ and equality holds if and only if $\Omega$ is the union of two identical disjoint balls.
  	\item[(iii)] If, in addition, $\Omega$ is connected, then 
  	\begin{equation*}
  		\frac{\lambda_{j}^-(H_\kappa^{B})}{\lambda_{l}^-(H_\kappa^{B})} \leq \frac{\lambda_{j}^-(H_\kappa^{\Omega})}{ \lambda_{l}^-(H_\kappa^{\Omega})}, \qquad j \in \{1,2\},\, l \in \{3,4\},
  	\end{equation*}
   and  equality holds if and only if $\Omega$ is a ball.
  \end{itemize}
\end{rem}

\subsection*{Acknowledgements.}
The authors gratefully acknowledge financial support by the Austrian Science Fund (FWF): P33568-N. This publication is 
based upon work from COST Action CA 18232 MAT-DYN-NET, supported by COST (European Cooperation in Science and Technology), www.cost.eu.
The authors thank the referee for helpful comments
and remarks that led to an improvement of the manuscript.

%%%%%%%%%%%%%%%%%%%%%%%%%%%%%%
\subsection*{Data availability statement}

Data sharing not applicable to this article as no datasets were generated or analysed during the current study.

\subsection*{Competing Interests}

The authors have no competing interests to declare that are relevant to the content of this article.

%%%%%%%%%%%%%%%%%%%%%%%%%%%
%%%%%%%%%%%%%%%%%%%%%%%%%%%
%\section*{Acknowledgements}
%%%%%%%%%%%%%%%%%%%%%%%%%%%
%%%%%%%%%%%%%%%%%%%%%%%%%%%

% 
% All authors gratefully acknowledge financial support by the Austrian Science Fund (FWF): P33568-N.
% This publication is based upon work from COST Action CA 18232 MAT-DYN-NET, supported by COST (European Cooperation in Science and Technology), www.cost.eu.

%%%%%%%%%%%%%%%%%%%%%%%%%%%%%
%%%%%%%%%%%%%%%%%%%%%%%%%%%%%
\bibliographystyle{abbrv}

\end{document}